\newtheorem{thm}{Theorem}[section]
\newtheorem{lem}[thm]{Lemma}
\theoremstyle{definition}
\newtheorem{rem}[thm]{Remark}
\newtheorem*{claim*}{Claim}
\theoremstyle{remark}
\numberwithin{equation}{section}
\title{\vspace{-3cm}\textbf{Inverse medium scattering problems with Kalman filter techniques}}
\author[1]{\rm Takashi Furuya}
\author[2]{\rm Roland Potthast}
\affil[1]{ {\small Department of Mathematics, Hokkaido University, Japan}}
\affil[ ]{{\small Email: takashi.furuya0101@gmail.com}\vspace{3mm}}
\affil[2]{{\small Data Assimilation Unit, Deutscher Wetterdienst, Germany}}
\affil[ ]{{\small Email: Roland.Potthast@dwd.de}}
\date{}
\begin{document}
\maketitle
\begin{abstract}
We study the inverse medium scattering problem to reconstruct the unknown inhomogeneous medium from the far-field patterns of scattered waves. 
The inverse scattering problem is generally ill-posed and nonlinear, and the iterative optimization method is often adapted.
A natural iterative approach to this problem is to place
all available measurements and mappings into one long vector and mapping, respectively, and to iteratively solve the linearized large system equation using the Tikhonov regularization method, which is called the Levenberg-Marquardt scheme.
However, this is computationally expensive because we must construct the larger system equations when the number of available measurements increases.
In this paper, we propose two reconstruction algorithms based on the Kalman filter.
One is the algorithm equivalent to the Levenberg–Marquardt scheme, and the other is inspired by the Extended Kalman Filter.
For the algorithm derivation, we iteratively apply the Kalman filter to the linearized equation for our nonlinear equation.
Our proposed algorithms sequentially update the state and the weight of the norm for the state space, which avoids the construction of a large system equation and retains the information of past updates.
Finally, we provide numerical examples to demonstrate our proposed algorithms.
\end{abstract}
\date{{\bf Key words}.
Inverse acoustic scattering, Inhomogeneous medium, Far-field pattern, Tikhonov regularization method, Levenberg–Marquardt, Kalman filter, Extended Kalman filter.}
%%%%%%%%%%%%%%%%%%%%%%%%%%%%%%%%%%%%%%%%%%%%%%%%%%%%%%%%%%%%%%%%%%%%%%%%%%%%%%%%%%%%%%%%%%%%%%%%%%%%%%%%%%%%%%%%%%%%%%%%%%%%%%%%%%%%%%%%%%%%%%%%%%%%%%%%%%%%%%%%%%%%%%%%%%%%%%%%%%%%%%%%%%%%%%%%%%%%%%%%%%%%%%%%%%%%%%%%%%%%%%%%%%%%%%%%%%%%%%%%
%%%%%%%%%%%%%%%%%%%%%%%%%%%%%%%%%%%%%%%%%%%%%%%%%%%%%%%%%%%%%%%%%%%%%%%%%%%%%%%%%%%%%%%%%%%%%%%%%%%%%%%%%%%%%%%%%%%%%%%%%%%%%%%%%%%%%%%%%%%%%%%%%%%%%%%%%%%%%%%%%%%%%%%%%%%%%%%%%%%%%%%%%%%%%%%%%%%%%%%%%%%%%%%%%%%%%%%%%%%%%%%%%%%%%%%%%%%%%%%
\section{Introduction}
The inverse scattering problem is used to determine unknown scatterers by measuring the scattered waves generated by sending incident waves far away from the scatterers.
This is important in many applications, such as medical imaging, non-destructive testing, remote exploration, and geophysical prospecting. 
Owing to its many applications, the inverse scattering problem has been studied from various perspectives.
For further reading, we refer to the following books \cite{Cakoni, Chen, ColtonKress, Kirsch, NakamuraPotthast}, which include reviews of classical and recent developments of the inverse scattering problem.
\par
We begin with the mathematical formulation of the scattering problem.
Let $k>0$ be the wave number and $\theta \in \mathbb{S}^{1}$ be the incident direction. 
We denote the incident field $u^{inc}(\cdot, \theta)$ with direction $\theta$ by the plane wave of the form 
\begin{equation}
u^{inc}(x, \theta):=\mathrm{e}^{ikx \cdot \theta}, \ x \in \mathbb{R}^2. 
\end{equation}
Let $Q \subset \mathbb{R}^2$ be a bounded open set with the smooth boundary and its exterior $\mathbb{R}^2\setminus  \overline{Q}$ be connected.
We assume that $q \in L^{\infty}(\mathbb{R}^2)$, which refers to the inhomogeneous medium, satisfies $\mathrm{Re}(1+q)>0$, $\mathrm{Im}q \geq 0$, and its support $\mathrm{supp}\ q$ is embedded in $Q$, that is, $\mathrm{supp}\ q \Subset Q$. 
Then, the direct scattering problem concerns determining the total field $u=u^{sca}+u^{inc}$ such that
\begin{equation}
\Delta u+k^2(1+q)u=0 \ \mathrm{in} \ \mathbb{R}^2, \label{1.2}
\end{equation}
\begin{equation}
\lim_{r \to \infty} \sqrt{r} \biggl( \frac{\partial u^{sca}}{\partial  r}-iku^{sca} \biggr)=0, \label{1.3}
\end{equation}
where $r=|x|$. 
The {\it Sommerfeld radiation condition} (\ref{1.3}) holds uniformly in all directions $\hat{x}:=\frac{x}{|x|}$. 
Furthermore, the problem (\ref{1.2})--(\ref{1.3}) is equivalent to the {\it Lippmann-Schwinger integral equation}
\begin{equation}
u(x, \theta)=u^{inc}(x, \theta)+k^2\int_{Q}q(y)u(y, \theta)\Phi(x,y)dy, \label{1.4}
\end{equation}
where $\Phi(x,y)$ denotes the fundamental solution of the Helmholtz equation in $\mathbb{R}^2$, that is, 
\begin{equation}
\Phi(x,y):= \displaystyle \frac{i}{4}H^{(1)}_0(k|x-y|), \ x \neq y, \label{1.5}
\end{equation}
where $H^{(1)}_0$ is the Hankel function of the first kind of order one. 
It is well known that there exists a unique solution $u^{sca}$ of the problem (\ref{1.2})--(\ref{1.3}), and it has the following asymptotic behaviour,
\begin{equation}
u^{sca}(x, \theta)=\frac{\mathrm{e}^{ikr}}{\sqrt{r}}\Bigl\{ u^{\infty}(\hat{x},\theta)+O\bigl(1/r \bigr) \Bigr\} , \ r \to \infty, \ \ \hat{x}:=\frac{x}{|x|}. \label{1.6}
\end{equation}
The function $u^{\infty}$ is called the {\it far field pattern} of $u^{sca}$, and has the form
\begin{equation}
u^{\infty}(\hat{x},\theta)
=
\frac{k^2\mathrm{e}^{\frac{i\pi}{4}} }{\sqrt{8\pi k}}
\int_{Q}
\mathrm{e}^{-ik \hat{x} \cdot y} u(y, \theta)q(y)dy=:\mathcal{F}_{\theta}q(\hat{x}), \label{1.7}
\end{equation}
where the far-field mapping $\mathcal{F}_{\theta}:L^{2}(Q) \to L^{2}(\mathbb{S}^{1})$ is defined in the second equality for each incident direction $\theta \in \mathbb{S}^{1}$. 
For more details on these direct scattering problems, see Chapter 8 of \cite{ColtonKress}. 
\par
We consider the inverse scattering problem to reconstruct function $q$ from the far field pattern $u^{\infty}(\hat{x}, \theta_n)$ for all directions $ \hat{x} \in \mathbb{S}^{1}$ and several directions $\{ \theta_n \}_{n=1}^{N}\subset \mathbb{S}^{1}$ with some large $N \in \mathbb{N}$, and one fixed wave number $k>0$.
It is well known that function $q$ is uniquely determined from the far field pattern $u^{\infty}(\hat{x}, \theta)$ for all $ \hat{x}, \theta \in \mathbb{S}^{1}$ and one fixed $k>0$ (see, e.g., \cite{bukhgeim2008recovering, novikov1988multidimensional, ramm1988recovery}).
However, the uniqueness for several incident plane waves is an open question.
For the impenetrable obstacle scattering case, if we assume that the shape of the scatterer is a polyhedron or ball, the uniqueness for a single incident plane wave is proved (see \cite{alessandrini2005determining, cheng2003uniqueness, liu1997inverse, liu2006uniqueness}).
Recently, in \cite{alberti2020infinitedimensional}, they showed the Lipschitz stability for the inverse medium scattering with finite measurements $\{ u^{\infty}(\hat{x}_{i}, \theta_{j}) \}_{i,j=1,...,N}$ for large $N \in \mathbb{N}$ under the assumption that the true function belongs to a compact and convex subset of finite-dimensional subspace. 
From these results, we expect that a large number of measurements $N$ is necessary to reconstruct the general shape of the scatterer.
\par
Our inverse problem (\ref{1.7}) is not only ill-posed, but also nonlinear. 
Existing methods for solving the nonlinear inverse problem can be roughly categorized into two groups: iterative optimization  and qualitative.
Iterative optimization methods (see, e.g., \cite{Bakushinsky, ColtonKress, Giorgi, Hohage, Kaltenbacher}) do not require many measurements, but an initial guess, which is the starting point of the iteration, is required.  
It must be appropriately chosen by a priori knowledge of the unknown function $q$, otherwise, the iterative solution cannot converge to the true function. 
Moreover, the qualitative methods such as the linear sampling method \cite{ColtonKirsch}, no-response test \cite{Honda}, probe method \cite{Ikehata}, factorization method \cite{KirschGrinberg}, and singular source method \cite{Potthast}, do not require an initial guess, and they are computationally faster than iterative methods.
However, the disadvantage of the qualitative method is that it requires an uncountable number of measurements. 
For a survey of qualitative methods, we refer to \cite{NakamuraPotthast}.
Recently, in \cite{ito2012direct, Liu_2018}, they suggested a reconstruction method from a single incident plane wave, although rigorous justifications are lacking.
Another approach is the {\it Born approximation}, which approximates the total field $u$ in (\ref{1.7}) with the incident field $u^{inc}$, and then the nonlinear equation is transformed into a linear equation.
Such an approximation is appropriate when the wavenumber $k>0$ and value of $q$ are small (see the second term in the right-hand side of (\ref{1.4})).
For further readings on the Born approximation, see \cite{Bakushinsky, Bao, ColtonKress, Kirsch2, Pike}. 
Nowadays, data-driven approaches are being developed; we refer to \cite{arridge_maass_oktem_schonlieb_2019} for their survey.
\par
Here, we focus on the iterative optimization method.
It is basically based on the {\it Newton method} (see, e.g., \cite{Bakushinsky, ColtonKress, Kaltenbacher, Kirsch, Kress, NakamuraPotthast}), which is a classical method for constructing an iterative solution by first-order linearization.
A natural iterative approach to our problem is to place all available measurements $\{ u^{\infty}(\cdot, \theta_n) \}_{n=1}^{N}$ and far field mappings $\{ \mathcal{F}_{\theta_{n}} \}_{n=1}^{N}$ into one large vector $\vec{u}^{\infty}$ and mapping $\vec{\mathcal{F}}$, respectively, and to iterative solve the linearized large system for $\vec{u}^{\infty}=\vec{\mathcal{F}}q$ by the {\it Tikhonov regularization method} (see, e.g., \cite{Cakoni, Hanke, Kress, NakamuraPotthast}), which is called {\it Levenberg–Marquardt} scheme (see, e.g., \cite{Hanke_1997, Kaltenbacher}).
However, this is computationally expensive because we must construct the larger vector $\vec{u}^{\infty}$ and mapping $\vec{\mathcal{F}}$ when the number of measurements $N$ is increasing.
\par
In this paper, we propose two reconstruction algorithms based on the {\it Kalman filter} without using  $\vec{u}^{\infty}$ and $\vec{\mathcal{F}}$.
The contributions of this paper are the following.
\begin{itemize}
\item[(A)] We propose a reconstruction algorithm equivalent to the Levenberg-Marquardt scheme (see (\ref{KFL-1})--(\ref{KFL-3}) and Theorem \ref{equivalence for KFN and FTN}). 
\item[(B)] We propose a reconstruction algorithm inspired by Extended Kalman filter (see (\ref{EKF-1})--(\ref{EKF-3})). 
\end{itemize}
The Kalman filter (see, e.g., \cite{Freitag, Kalman, NakamuraPotthast}) is an algorithm that estimates the unknown state in a linear system. 
It sequentially updates the state and the weight of the norm for the state space, which avoids constructing the large system equation and retains the information of past updates.
For the algorithm derivation, we iteratively apply the Kalman filter to the linearized equation for our nonlinear equation (\ref{1.7}).
We call algorithm (A) the {\it Kalman filter Levenberg–Marquardt} (KFL) and algorithm (B) the {\it iterative Extended Kalman Filter} (EKF). 
KFL is based on the linearization at the initial state for each iteration step, whereas EKF is based on the linearization at the current state for every iteration step, implying that the update of KFL is slower than that of EKF (see Remark \ref{EKF and KFL}). 
For both algorithms, we can select the initialization or update of the weight of the norm for each iteration step depending on the situation (see Remark \ref{update B}), which is a beneficial result from viewing the iterative algorithm as a Kalman filter.
We provide numerical examples to demonstrate the proposed algorithms and observed that, in both the noise-free and noise-free cases, the error of EKF decreases faster than that of KFL if the regularization parameter is chosen appropriately (see Figures \ref{B1}--\ref{B2noise}).
However, EKF is more sensitive to the regularization parameter and noise than KFL (see Figures \ref{Graph sigma} and \ref{Graph alpha}).
We also observed that, in both algorithms, if we select the update of the weight in (\ref{update weight B}), the algorithms become more robust to the regularization parameter and noise, although the error slowly decreases (see Figures \ref{Graph sigma} and \ref{Graph alpha}).
\par
The remainder of this paper is organized as follows. 
In Section \ref{Preliminary}, we briefly review the far-field mapping and the Kalman filter.
In Section \ref{Kalman filter Levenberg–Marquardt}, we propose the KFL reconstruction algorithm.
In Section \ref{Iterative Extended Kalman filter}, we propose the EKF reconstruction algorithm.
Finally, in Section \ref{Numerical examples}, we provide numerical examples to demonstrate the algorithms.
%%%%%%%%%%%%%%%%%%%%%%%%%%%%%%%%%%%%%%%%%%%%%%%%%%%%%%%%%%%%%%%%%%%%%%%%%%%%%%%%%%%%%%%%%%%%%%%%%%%%%%%%%%%%%%%%%%%%%%%%%%%%%%%%%%%%%%%%%%%%%%%%%%%%%%%%%%%%%%%%%%%%%%%%%%%%%%%%%%%%%%%%%%%%%%%%%%%%%%%%%%%%%%%%%%%%%%%%%%%%%%%%%%%%%%%%%%%%%%%%%%%%%%%%%%%%%%%%%%%%%%%%%%%%%%%%%%%%%%%%%%%%%%%%%%%%%%%%%%%%%%%%%%%%%%%%%%%%%%%%%%%%%%%%%%%%%%%%%%%%%%%%%%%%%%%%%%%%%%%%%%%%%%%%%%%%%%%%%%%%%%%%%%%%%%%%%%%%%%%%%%%%%%%%%%%%%%%%%%%%%%%%%%%%%%%%%%%%%%%%%%%%%%%%%%%%%%%%%%%%%%%%%%%%%%%%%%%%%%%%%%%%%%
\section{Preliminary}\label{Preliminary}
%%%%%%%%%%%%%%%%%%%%%%%%%%%%%%%%%%%%%%%%%%%
\subsection{Far field mapping}
In this section, we briefly review the Fr'echet derivative of the far-field mapping, the Lipschitz stability, and the Levenberg-Marquardt scheme for inverse scattering.
We redefine the far filed mapping $\mathcal{F}_{\theta}: L^{2}(Q) \to L^{2}(\mathbb{S}^{1})$ by 
\begin{equation}
\mathcal{F}_{\theta}q(\hat{x})
:=
\frac{k^2\mathrm{e}^{\frac{i\pi}{4}} }{\sqrt{8\pi k}}
\int_{Q}\mathrm{e}^{-ik \hat{x} \cdot y} u_{q}(y, \theta)q(y)dy, \ \hat{x} \in \mathbb{S}^1, \label{2.1}
\end{equation}
where the total field $u_{q}(\cdot, \theta)$ is given by solving the integral equation of (\ref{1.4}).
In addition, We denote $L^{\infty}_{+}(Q):=\{q \in L^{\infty}(Q): \exists q_{0}>0,\ \mathrm{Im}q \geq q_{0} \ \mathrm{a.e} \ \mathrm{on} \ Q \} \subset L^{2}(Q)$.
The following lemma is proved by the same argument as in Section 2 of \cite{Bao2005InverseMS}.

\begin{lem}\label{Lemma 2.1}
\begin{description}
\item[(1)] $\mathcal{F}_{\theta} \in C^{1}(L^{\infty}_{+}(Q), L^{2}(\mathbb{S}^{1}))$, that is, for any $q \in L^{\infty}_{+}(Q)$, $\mathcal{F}_{\theta}$ is Fr\'echet differentiable at $q$, and by denoting the
Fr\'echet derivative by $\mathcal{F}^{\prime}_{\theta}[q]:L^{2}(Q) \to L^{2}(\mathbb{S}^{1})$, the mapping $q \in L^{\infty}_{+}(Q) \mapsto \mathcal{F}^{\prime}_{\theta}[q] \in \mathcal{L}(L^{2}(Q), L^{2}(\mathbb{S}^{1}))$ is continuous, and its derivative $\mathcal{F}^{\prime}_{\theta}[q]$ at $q$ is given by
\begin{equation}
\mathcal{F}^{\prime}_{\theta}[q]m=v^{\infty}_{q,m}, \label{2.2}
\end{equation} 
where $v^{\infty}_{q,m}$ is the far-field pattern of the radiating solution $v=v_{q,m}$ such that
\begin{equation}
\Delta v+k^2(1+q)v=-k^{2}mu_{q}(\cdot, \theta) \ \mathrm{in} \ \mathbb{R}^2. \label{2.3}
\end{equation}
\item[(2)] $\mathcal{F}^{\prime}_{\theta}[\cdot]$ is locally bounded.
\end{description}
\end{lem}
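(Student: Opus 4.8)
The plan is to reformulate everything through the Lippmann--Schwinger equation and then differentiate the solution operator, following the strategy of \cite{Bao2005InverseMS}. Write $V\phi(x):=\int_Q \Phi(x,y)\phi(y)\,dy$ for the volume potential and $M_q$ for multiplication by $q$, so that (\ref{1.4}) reads $(I-k^2VM_q)u_q=u^{inc}$ on $L^2(Q)$. Since $\Phi$ is weakly singular, $V:L^2(Q)\to L^2(Q)$ is compact, hence $I-k^2VM_q$ is Fredholm of index zero; the conditions $\mathrm{Re}(1+q)>0$ and $\mathrm{Im}\,q\ge 0$ give uniqueness of the radiating solution and therefore invertibility, so $u_q=(I-k^2VM_q)^{-1}u^{inc}$ is well defined. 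The first point I would record is that for $q\in L^\infty_+(Q)$ the strict absorption $\mathrm{Im}\,q\ge q_0>0$ persists under small $L^\infty$ perturbations, so on the ball $\{\,\|m\|_{L^\infty}<q_0/2\,\}$ the inverses $(I-k^2VM_{q+m})^{-1}$ all exist and are uniformly bounded on $L^2(Q)$; together with the Sobolev embedding $H^2\hookrightarrow C^{0,\alpha}$ in two dimensions this also gives $u_q\in L^\infty(Q)$ with a locally uniform bound.

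Next I would differentiate the solution operator. Subtracting the Lippmann--Schwinger equations for $q+m$ and $q$ gives $(I-k^2VM_q)(u_{q+m}-u_q)=k^2VM_m u_{q+m}$, whence $\|u_{q+m}-u_q\|_{L^2}=O(\|m\|_{L^\infty})$ from the uniform bound above. Setting $v_{q,m}:=k^2(I-k^2VM_q)^{-1}V[m\,u_q]$ --- equivalently the radiating solution of (\ref{2.3}), since that PDE is exactly the integral equation $(I-k^2VM_q)v=k^2V[m\,u_q]$ --- the same manipulation yields $u_{q+m}-u_q-v_{q,m}=k^2(I-k^2VM_q)^{-1}VM_m(u_{q+m}-u_q)$, so the remainder is $O(\|m\|_{L^\infty}^2)=o(\|m\|_{L^\infty})$. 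This identifies $m\mapsto v_{q,m}$ as the Fr\'echet derivative of $q\mapsto u_q$, and since $u_q\in L^\infty(Q)$ the bound $\|v_{q,m}\|_{L^2}\le C\|m\|_{L^2}$ holds, so this derivative extends to a bounded operator on $L^2(Q)$.

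I would then pass to the far-field map. Writing (\ref{2.1}) as $\mathcal{F}_\theta q=c_k\,G[\,q\,u_q\,]$ with $c_k:=k^2\mathrm{e}^{i\pi/4}/\sqrt{8\pi k}$ and $G\psi(\hat x):=\int_Q \mathrm{e}^{-ik\hat x\cdot y}\psi(y)\,dy$ bounded from $L^2(Q)$ to $L^2(\mathbb{S}^1)$, the product rule and the previous step give
\[
\mathcal{F}_\theta'[q]m=c_k\,G[\,q\,v_{q,m}+m\,u_q\,].
\]
On the other hand, from $v_{q,m}=k^2V[\,q\,v_{q,m}+m\,u_q\,]$ the asymptotics (\ref{1.6}) of $\Phi$ read off the far-field pattern of $v_{q,m}$ as precisely $c_k\,G[\,q\,v_{q,m}+m\,u_q\,]$, so $\mathcal{F}_\theta'[q]m=v^\infty_{q,m}$, which is (\ref{2.2})--(\ref{2.3}). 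Continuity of $q\mapsto\mathcal{F}_\theta'[q]$ in $\mathcal{L}(L^2(Q),L^2(\mathbb{S}^1))$ then follows from continuity of $q\mapsto(I-k^2VM_q)^{-1}$ (a consequence of the second resolvent identity and the uniform bounds) and of $q\mapsto u_q$, finishing part (1). Part (2) is then immediate, since a continuous map is locally bounded.

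The main obstacle is the locally uniform invertibility of $I-k^2VM_q$: every $o(\|m\|)$ and continuity estimate above hinges on a bound for $(I-k^2VM_{q+m})^{-1}$ that does not degenerate as $m$ ranges over a neighborhood, and this is exactly where the strict lower bound $\mathrm{Im}\,q\ge q_0>0$, rather than merely $\mathrm{Im}\,q\ge 0$, is used: it keeps the whole neighborhood inside the regime in which the radiation condition forces uniqueness with a controllable constant.
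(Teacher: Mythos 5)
Your proposal is correct and is essentially the same argument the paper relies on: the paper gives no proof of its own but cites Section 2 of \cite{Bao2005InverseMS}, which is precisely the Lippmann--Schwinger-based scheme you carry out (invert $I-k^2VM_q$ by Fredholm theory plus uniqueness, differentiate the solution operator $q\mapsto u_q$ through the integral equation, identify the linearization with the radiating solution of (\ref{2.3}), and read off its far-field pattern, with the derivative extending to $L^2(Q)$ because $u_q\in L^\infty(Q)$). One minor remark: your closing paragraph overstates the role of the strict absorption $\mathrm{Im}\,q\ge q_0>0$, since once $(I-k^2VM_q)^{-1}$ exists (for which $\mathrm{Im}\,q\ge 0$ suffices), a Neumann-series perturbation argument already yields existence and a uniform bound for $(I-k^2VM_{q+m})^{-1}$ on a sufficiently small $L^\infty$-ball, so this is not where strict positivity is truly needed---but that does not affect the validity of your proof.
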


We denote the far field mappings for all incident directions $\mathcal{F}: L^{\infty}_{+}(Q) \subset L^{2}(Q) \to L^{2}(\mathbb{S}^{1}\times \mathbb{S}^{1})$ by $\mathcal{F}q(\hat{x}, \theta):=\mathcal{F}_{\theta}q(\hat{x})$.
The following lemma is proved by the same argument as in Section 11 of \cite{ColtonKress} and Section 2 of \cite{Bao2005InverseMS}.

\begin{lem}\label{prop of Frechet}
\begin{description}
\item[(1)] $\mathcal{F}: \{q \in L^{\infty}(Q): \mathrm{Im}q \geq 0 \ \mathrm{a.e.} \ \mathrm{on} \ Q \} \to L^{2}(\mathbb{S}^{1} \times \mathbb{S}^{1})$ is injective.
\item[(2)] $\mathcal{F}\in C^{1}(L^{\infty}_{+}(Q), L^{2}(\mathbb{S}^{1} \times \mathbb{S}^{1}))$, and its derivative  $\mathcal{F}^{\prime}[q]: L^{2}(Q) \to L^{2}(\mathbb{S}^{1} \times \mathbb{S}^{1})$ at $q$ is injective.
\end{description}
\end{lem}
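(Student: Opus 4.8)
The plan is to derive both injectivity statements from a single analytic input — the completeness in $L^2(Q)$ of products of total fields — and to read off the $C^1$-regularity in (2) directly from Lemma \ref{Lemma 2.1}.

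For (1), suppose $q_1,q_2$ are two admissible media with $\mathcal{F}q_1=\mathcal{F}q_2$, so that the far-field patterns of the total fields $u_{q_1},u_{q_2}$ coincide for all $\hat x,\theta\in\mathbb{S}^1$. First I would invoke Rellich's lemma together with unique continuation (the exterior $\mathbb{R}^2\setminus\overline Q$ is connected) to upgrade the coincidence of the far-field patterns to the coincidence of the scattered fields, hence of the total fields $u_{q_1}(\cdot,\theta)$ and $u_{q_2}(\cdot,\theta)$, throughout the unbounded component; in particular their Cauchy data agree on $\partial Q$ for each $\theta$. Next I would fix two incident directions $\theta,\theta'$ and apply Green's second identity on $Q$ to the pair $u_{q_1}(\cdot,\theta),u_{q_2}(\cdot,\theta')$. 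Using $\Delta u_{q_j}(\cdot,\cdot)+k^2(1+q_j)u_{q_j}(\cdot,\cdot)=0$ in the bulk, and replacing the Cauchy data of $u_{q_1}(\cdot,\theta)$ on $\partial Q$ by those of $u_{q_2}(\cdot,\theta)$ (legitimate by the previous step), the boundary term collapses because $u_{q_2}(\cdot,\theta)$ and $u_{q_2}(\cdot,\theta')$ solve the \emph{same} equation; what remains is
\[
k^2\int_Q (q_1-q_2)\,u_{q_1}(y,\theta)\,u_{q_2}(y,\theta')\,dy=0\quad\text{for all }\theta,\theta'\in\mathbb{S}^1 .
\]
Finally I would invoke the completeness of the products $\{u_{q_1}(\cdot,\theta)\,u_{q_2}(\cdot,\theta')\}_{\theta,\theta'}$ in $L^2(Q)$ to conclude $q_1=q_2$ a.e.

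For (2), the $C^1$-property is inherited from Lemma \ref{Lemma 2.1}: since $\mathcal{F}q(\hat x,\theta)=\mathcal{F}_\theta q(\hat x)$ and each $\mathcal{F}_\theta$ is continuously Fr\'echet differentiable with locally bounded derivative, a dominated-convergence argument — using that the remainder estimates of Lemma \ref{Lemma 2.1} are uniform in $\theta$ — shows that the $L^2(\mathbb{S}^1\times\mathbb{S}^1)$-valued map $\mathcal{F}$ is continuously Fr\'echet differentiable with $\mathcal{F}'[q]m(\hat x,\theta)=v^\infty_{q,m}(\hat x,\theta)$, where $v_{q,m}$ solves \eqref{2.3}. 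To prove injectivity of $\mathcal{F}'[q]$, suppose $\mathcal{F}'[q]m=0$ for some $m\in L^2(Q)$. Writing $v_{q,m}$ through the radiating Green's function $G_q$ of $\Delta+k^2(1+q)$, namely $v_{q,m}(x)=k^2\int_Q G_q(x,y)\,m(y)\,u_q(y,\theta)\,dy$, and applying the mixed reciprocity relation $G_q^\infty(\hat x,y)=\gamma\,u_q(y,-\hat x)$ for a nonzero constant $\gamma$, I obtain the factorization
\[
\mathcal{F}'[q]m(\hat x,\theta)=k^2\gamma\int_Q u_q(y,-\hat x)\,u_q(y,\theta)\,m(y)\,dy .
\]
Its vanishing for all $\hat x,\theta\in\mathbb{S}^1$, together with the completeness of $\{u_q(\cdot,\theta_1)\,u_q(\cdot,\theta_2)\}_{\theta_1,\theta_2}$ in $L^2(Q)$, forces $m=0$.

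The main obstacle is precisely this completeness of products of total fields in $L^2(Q)$, which drives both parts. At a single fixed wave number it is the delicate ingredient: in dimension two it cannot be obtained by an elementary density argument and rests on the construction of complex-geometrical-optics (Bukhgeim-type) solutions together with a Runge approximation of such solutions by genuine scattering total fields, exactly as in the uniqueness results cited in the introduction. Once this completeness is granted, both the Green's-identity reduction in (1) and the mixed-reciprocity factorization in (2) are routine.
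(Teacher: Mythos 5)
Your proposal is correct, and in substance it reconstructs exactly the argument the paper relies on: the paper gives no self-contained proof of this lemma, but instead appeals to the same arguments as in Section 11 of \cite{ColtonKress} and Section 2 of \cite{Bao2005InverseMS}, and those arguments are precisely your chain Rellich's lemma $\to$ Green's-identity orthogonality relation $\to$ completeness of products of solutions, together with the $C^1$-regularity inherited from Lemma \ref{Lemma 2.1}. The one genuine deviation is in part (2): to prove injectivity of $\mathcal{F}^{\prime}[q]$ you factor the derivative through the medium Green's function and the mixed reciprocity relation $G_q^{\infty}(\hat x,y)=\gamma\, u_q(y,-\hat x)$, obtaining the bilinear identity $\mathcal{F}^{\prime}[q]m(\hat x,\theta)=k^2\gamma\int_Q u_q(y,-\hat x)\,u_q(y,\theta)\,m(y)\,dy$, whereas the cited route reaches the same orthogonality relation without mixed reciprocity: if $\mathcal{F}^{\prime}[q]m=0$, then the far field of $v_{q,m}$ vanishes for every $\theta$, so Rellich's lemma and unique continuation give $v_{q,m}=0$ outside $Q$, and Green's second identity against $u_q(\cdot,\theta^{\prime})$ yields $\int_Q m\,u_q(\cdot,\theta)\,u_q(\cdot,\theta^{\prime})\,dy=0$ for all $\theta,\theta^{\prime}$. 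The two derivations are equivalent in strength, since both terminate in the same completeness statement; your factorization-method-style identity has the merit of exhibiting $\mathcal{F}^{\prime}[q]$ explicitly as a bilinear form in total fields, at the price of having to establish mixed reciprocity for $G_q$, while the Rellich route reuses only the tools already needed in part (1). Finally, your identification of the completeness of products at one fixed wavenumber in two dimensions as the deep, non-elementary ingredient (Bukhgeim-type complex-geometrical-optics solutions plus Runge approximation, as in \cite{bukhgeim2008recovering}) is accurate, and treating it as a black box matches the paper's own level of rigor, which likewise defers it to the cited literature.
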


By applying Theorem 2.1 of \cite{BOURGEOIS2013187} with Lemma \ref{prop of Frechet}, we obtain the following Lipschitz stability.

\begin{lem}\label{Lemma 2.3}
Let $W$ be a finite-dimensional
subspace of $L^{2}(Q)$ and $K$ be a compact and convex subset of $W \cap L^{\infty}_{+}(Q)$. Then, there exists a constant $C>0$ such that
\begin{equation}
\left\| q_{1} - q_{2} \right\|_{L^{2}(Q)} \leq C \left\| \mathcal{F}q_{1} - \mathcal{F}q_{2} \right\|_{L^{2}(\mathbb{S}^{1} \times \mathbb{S}^{1})}, \ q_{1}, q_{2} \in K.
\end{equation}
\end{lem}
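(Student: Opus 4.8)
The plan is to argue by contradiction, combining the compactness of $K$ with the finite-dimensionality of $W$ to extract a limit, and then to play the two injectivity statements of Lemma~\ref{prop of Frechet} off against one another. Assume the asserted inequality fails for every $C>0$. Then for each $n\in\mathbb{N}$ there exist $q_{1}^{(n)},q_{2}^{(n)}\in K$ with
\[
\bigl\| q_{1}^{(n)}-q_{2}^{(n)} \bigr\|_{L^{2}(Q)}
> n\,\bigl\| \mathcal{F}q_{1}^{(n)}-\mathcal{F}q_{2}^{(n)} \bigr\|_{L^{2}(\mathbb{S}^{1}\times\mathbb{S}^{1})}.
\]
In particular $q_{1}^{(n)}\neq q_{2}^{(n)}$, so I may normalise and set $h_{n}:=(q_{1}^{(n)}-q_{2}^{(n)})/\| q_{1}^{(n)}-q_{2}^{(n)} \|_{L^{2}(Q)}$, a unit vector in the finite-dimensional space $W$.

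Next I would pass to convergent subsequences. Since $K$ is compact, after relabelling there are $q_{1}^{\ast},q_{2}^{\ast}\in K$ with $q_{j}^{(n)}\to q_{j}^{\ast}$ in $L^{2}(Q)$. Because $W$ is finite-dimensional, its unit sphere is compact and all norms on $W$ are equivalent; hence a further subsequence yields $h_{n}\to h$ with $\| h \|_{L^{2}(Q)}=1$, and every one of these convergences holds simultaneously in the $L^{2}$- and $L^{\infty}$-topologies on $W$. From the contradiction hypothesis together with $\| q_{1}^{(n)}-q_{2}^{(n)} \|_{L^{2}(Q)}\le\mathrm{diam}(K)$ one gets $\| \mathcal{F}q_{1}^{(n)}-\mathcal{F}q_{2}^{(n)} \|_{L^{2}(\mathbb{S}^{1}\times\mathbb{S}^{1})}\to 0$, so by continuity $\mathcal{F}q_{1}^{\ast}=\mathcal{F}q_{2}^{\ast}$.

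Now I split into two cases. If $q_{1}^{\ast}\neq q_{2}^{\ast}$, the equality $\mathcal{F}q_{1}^{\ast}=\mathcal{F}q_{2}^{\ast}$ contradicts the injectivity of $\mathcal{F}$ in Lemma~\ref{prop of Frechet}(1). Otherwise $q_{1}^{\ast}=q_{2}^{\ast}=:q^{\ast}\in K\subset L^{\infty}_{+}(Q)$, and this is the case that carries the real content. Using that $K$ is convex, the whole segment $q_{2}^{(n)}+t(q_{1}^{(n)}-q_{2}^{(n)})$, $t\in[0,1]$, stays inside $K\subset L^{\infty}_{+}(Q)$, where $\mathcal{F}$ is $C^{1}$ by Lemma~\ref{Lemma 2.1}; the fundamental theorem of calculus then gives
\[
\frac{\mathcal{F}q_{1}^{(n)}-\mathcal{F}q_{2}^{(n)}}{\| q_{1}^{(n)}-q_{2}^{(n)} \|_{L^{2}(Q)}}
=\int_{0}^{1}\mathcal{F}'\!\bigl[ q_{2}^{(n)}+t(q_{1}^{(n)}-q_{2}^{(n)}) \bigr]h_{n}\,dt.
\]
The left-hand side has norm $<1/n$ and hence tends to $0$. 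On the right, the argument of $\mathcal{F}'$ converges to $q^{\ast}$ in $L^{\infty}(Q)$ uniformly in $t$ while $h_{n}\to h$ in $L^{2}(Q)$; invoking the continuity of $q\mapsto\mathcal{F}'[q]$ and its local boundedness from Lemma~\ref{Lemma 2.1}, the integrand converges to $\mathcal{F}'[q^{\ast}]h$ uniformly in $t$. Passing to the limit yields $\mathcal{F}'[q^{\ast}]h=0$, whence $h=0$ by the injectivity of $\mathcal{F}'[q^{\ast}]$ in Lemma~\ref{prop of Frechet}(2), contradicting $\| h \|_{L^{2}(Q)}=1$.

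The main obstacle is precisely this degenerate case $q_{1}^{\ast}=q_{2}^{\ast}$, where zeroth-order injectivity is useless and one must descend to the linearisation. It is here that all the hypotheses conspire: convexity of $K$ keeps the connecting segment in the domain $L^{\infty}_{+}(Q)$ of differentiability, the finite-dimensionality of $W$ upgrades the $L^{2}$-convergence of the base points to the $L^{\infty}$-convergence required by the continuity of $\mathcal{F}'[\cdot]$, and the injectivity of $\mathcal{F}'[q^{\ast}]$ supplies the final contradiction. I would expect the only genuinely delicate bookkeeping to be tracking which topology, $L^{2}$ or $L^{\infty}$, each convergence takes place in.
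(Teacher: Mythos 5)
Your proof is correct, and in substance it is the same argument the paper relies on: the paper's entire proof of this lemma is the one-line citation ``apply Theorem 2.1 of \cite{BOURGEOIS2013187} together with Lemma \ref{prop of Frechet}'', and the contradiction--compactness--mean-value argument you wrote out is precisely the proof of that abstract stability theorem of Bourgeois, specialized to $\mathcal{F}$ on $K$. So the difference is only one of packaging: the paper buys brevity by outsourcing the work to the citation, while your version buys self-containedness and makes visible exactly where each hypothesis enters --- injectivity of $\mathcal{F}$ disposes of the case $q_1^{\ast}\neq q_2^{\ast}$, convexity of $K$ keeps the connecting segment inside $L^{\infty}_{+}(Q)$ where the fundamental theorem of calculus for $\mathcal{F}$ is legitimate, finite-dimensionality compactifies the normalized differences $h_n$ (this is essential: the statement is false for a general compact convex $K$ in infinite dimensions, e.g.\ for an injective compact linear operator on a Hilbert cube), and injectivity of $\mathcal{F}'[q^{\ast}]$ delivers the final contradiction. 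One small imprecision: you claim the $L^2$- and $L^{\infty}$-topologies are equivalent ``on $W$'', but a finite-dimensional subspace $W\subset L^{2}(Q)$ need not consist of essentially bounded functions, so the $L^{\infty}$-norm may not even be defined on all of $W$; the fix is to work in $\mathrm{span}(K)\subseteq W\cap L^{\infty}(Q)$, which is finite-dimensional, contains every $q_j^{(n)}$, every $h_n$, and their limits, and on which the two norms are genuinely equivalent. With that adjustment (and noting that the $C^1$ property of the full map $\mathcal{F}$ is Lemma \ref{prop of Frechet}(2) rather than Lemma \ref{Lemma 2.1}, which concerns a single $\mathcal{F}_{\theta}$), your bookkeeping of topologies goes through exactly as described.
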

Let $\{ \theta_{i}: i \in \mathbb{N} \}$ be dense in $\mathbb{S}^{1}$.
We denote $\vec{\mathcal{F}}_{N}:L^{2}(Q) \to L^{\infty}(\mathbb{S}^{1})^{N}$ by  
\begin{equation}
\vec{\mathcal{F}}_{N}(q):=\left(
\begin{array}{cc}
\mathcal{F}_{\theta_{1}}q \\
\vdots \\
\mathcal{F}_{\theta_{N}}q
\end{array}
\right).  
\end{equation}
Although Lemma \ref{Lemma 2.3} showed the Lipschitz stability with infinite-dimensional measurements, by applying Theorem 7 of \cite{alberti2020infinitedimensional}, we have the Lipschitz stability with finitely many measurements.

\begin{lem}\label{Lipchotz stability}
Let $W$ be a finite-dimensional subspace of $L^{2}(Q)$ and $K$ be a compact and convex subset of $W \cap L^{\infty}_{+}(Q)$.
Then, for large $N \in \mathbb{N}$, there exists a constant $C_{N}>0$ such that
\begin{equation}
\left\| q_{1} - q_{2} \right\|_{L^{2}(Q)} \leq C_{N} \left\| \vec{\mathcal{F}}_{N}q_{1} - \vec{\mathcal{F}}_{N}q_{2} \right\|_{L^{2}(\mathbb{S}^{1})^{N}}, \ q_{1}, q_{2} \in K. \label{stab}
\end{equation}
\end{lem}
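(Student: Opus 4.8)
The plan is to derive the finite-measurement estimate (\ref{stab}) from the infinite-measurement estimate of Lemma \ref{Lemma 2.3} by invoking the abstract sampling result, Theorem 7 of \cite{alberti2020infinitedimensional}. The only gap between the two statements is that Lemma \ref{Lemma 2.3} bounds $\| q_1 - q_2 \|_{L^2(Q)}$ by the continuum norm $\| \mathcal{F} q_1 - \mathcal{F} q_2 \|_{L^2(\mathbb{S}^1 \times \mathbb{S}^1)}$, which integrates over all incident directions $\theta \in \mathbb{S}^1$, whereas (\ref{stab}) keeps only the finitely many directions $\theta_1, \dots, \theta_N$. Since $\{ \theta_i \}$ is dense in $\mathbb{S}^1$, one expects that for $N$ large the discrete quantity $\| \vec{\mathcal{F}}_N q_1 - \vec{\mathcal{F}}_N q_2 \|_{L^2(\mathbb{S}^1)^N}^2 = \sum_{n=1}^N \| \mathcal{F}_{\theta_n}(q_1 - q_2) \|_{L^2(\mathbb{S}^1)}^2$ retains enough of the continuum to preserve a (possibly larger) Lipschitz constant $C_N$.

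To apply Theorem 7 of \cite{alberti2020infinitedimensional} I would first recast the setting so that the incident direction $\theta$ plays the role of the sampling variable: regard $\theta \mapsto \mathcal{F}_\theta q \in L^2(\mathbb{S}^1)$ as an $L^2(\mathbb{S}^1)$-valued function on the sampling manifold $\mathbb{S}^1$, parametrized by the unknown $q$. The theorem then requires two ingredients. The first is the global Lipschitz stability of the continuum map on the compact convex set $K \subset W \cap L^\infty_+(Q)$; this is precisely Lemma \ref{Lemma 2.3} (underpinned by the injectivity of $\mathcal{F}$ and $\mathcal{F}'[q]$ in Lemma \ref{prop of Frechet}). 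The second is a regularity hypothesis on the $\theta$-dependence, together with the density of $\{\theta_i\}$, so that a sufficiently fine finite sample may be chosen from among $\theta_1, \dots, \theta_N$.

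The main step, and the expected obstacle, is to verify this regularity hypothesis, namely that $\theta \mapsto \mathcal{F}_\theta q$ is real-analytic (or at least Lipschitz) as a map $\mathbb{S}^1 \to L^2(\mathbb{S}^1)$, with estimates uniform in $q \in K$. Here the structure of (\ref{2.1}) helps: rewriting the Lippmann-Schwinger equation (\ref{1.4}) as $(I - T_q) u_q(\cdot, \theta) = u^{inc}(\cdot, \theta)$, where $T_q w := k^2 \int_Q q(y) w(y) \Phi(\cdot, y)\, dy$ is the $\theta$-independent volume-potential operator, the entire $\theta$-dependence of $u_q(\cdot,\theta) = (I - T_q)^{-1} u^{inc}(\cdot, \theta)$ is carried by the incident field $u^{inc}(\cdot, \theta) = \mathrm{e}^{ik\, \cdot\, \theta}$, which is manifestly real-analytic in $\theta$ as an $L^2(Q)$-valued map. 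Composing with the ($\theta$-independent) bounded operator $(I - T_q)^{-1}$ and with the analytic kernel $\mathrm{e}^{-ik\hat{x}\cdot y}$ appearing in (\ref{2.1}) then yields analyticity of $\theta \mapsto \mathcal{F}_\theta q$. The delicate point is the uniformity over $K$: one must bound the resolvent $(I - T_q)^{-1}$ and its contribution to the analytic estimates uniformly for $q \in K$, which I would obtain by combining the continuity of $q \mapsto (I - T_q)^{-1}$, the local boundedness in Lemma \ref{Lemma 2.1}(2), and the compactness of $K$.

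With both hypotheses verified, Theorem 7 of \cite{alberti2020infinitedimensional} produces, for $N$ large enough, a constant $C_N > 0$ for which the sampled estimate (\ref{stab}) holds for all $q_1, q_2 \in K$, completing the argument.
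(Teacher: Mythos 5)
Your proposal follows essentially the same route as the paper: the paper's entire proof consists of invoking Theorem 7 of \cite{alberti2020infinitedimensional} on top of the continuum stability and injectivity/differentiability results (Lemmas \ref{prop of Frechet} and \ref{Lemma 2.3}), exactly as you do. Your additional verification of the sampling hypothesis --- analyticity of $\theta \mapsto \mathcal{F}_{\theta}q$ via the Lippmann--Schwinger structure, uniformly over the compact set $K$ --- is detail the paper leaves implicit in that citation, and it is a sound way to fill it in.
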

%%%%%%%%%%%%%%%%%%%%%%%%%%%%%%%%%%%%%%%%%%%%%%%%%%%%%%%%%%%%%%%%%%%%%%%%%%%%%%%%%%%%%%%%%%%%%%%%%%%%%%%%%%%%%%%%%%%%%%%%%%%%%%%%%%%%%%%%%%%%%
%%%%%%%%%%%%%%%%%%%%%%%%%%%%%%%%%%%%%%%%%%%%%%%%%%%%%%%%%%%%%%%%%%%%%%%%%%%%%%%%%%%%%%%%%%%%%%%%%%%%%%%%%%%%%%%%%%%%%%%%%%
Finally, we recall the Levenberg-Marquardt scheme for our problem as follows:
\begin{equation}
q_{i+1}=q_{i} + \left(\alpha_{i} I + \vec{\mathcal{F}}_{N}^{\prime}[q_{i}]^{*}\vec{\mathcal{F}}_{N}^{\prime}[q_{i}]  \right)^{-1}\vec{\mathcal{F}}_{N}^{\prime}[q_{i}]^{*}\left( \vec{u}^{\infty} - \vec{\mathcal{F}}_{N}(q_{i}) \right), \label{3.9}
\end{equation}
for $i \in \mathbb{N}_{0}$ (see, e.g., \cite{Hanke_1997, Kaltenbacher}). 
We remark that the regularization parameter $\alpha_i>0$ in (\ref{3.9}) must be chosen appropriately.
One of the choices of $\alpha_i$ is based on the Morozov discrepancy principle, that is, $\alpha_i>0$ is chosen such that it satisfies
\begin{equation}
\left\| \vec{u}^{\infty} - \vec{\mathcal{F}}_{N}(q_{i}) - \vec{\mathcal{F}}_{N}^{\prime}[q_{i}] (q_{i+1}(\alpha_i) - q_{i}) \right\| = \rho \left\| \vec{u}^{\infty} - \vec{\mathcal{F}}_{N}(q_{i}) \right\|, \label{morozov discrepancy principle}
\end{equation}
for some fixed $\rho \in (0, 1)$, where $q_{i+1}(\alpha_i)$ is defined by (\ref{3.9}). 
Applying Theorem 4.2 of \cite{Kaltenbacher} with Lemma \ref{Lipchotz stability}, we obtain the following convergence for (\ref{3.9}). 
%%%%%%%%%%%%%%%%%%%%%%%%%%%%%%%%%%%%%%%%%%%%%%%%%%%%%%%%%%%%%%%%%%%%%%%%%%%%%%%%%%%%%%%%%%%%%%%%%%%%%%%%%%%%%%%%%%%%%%%%%%%%%%%%%%%
\begin{lem} \label{convergence LM}
Let $W$ be a finite-dimensional subspace of $L^{2}(Q)$ and let $0<r<1$ be small such that $B_{r}(q_{0}) \Subset W \cap L^{\infty}_{+}(Q)$.
Let $N \in \mathbb{N}$ be large such that (\ref{stab}) holds, let $0<\rho<1$, and let $q^{\dag} \in B_{r}(q_{0})$ be a solution of $\vec{u}^{\infty} = \vec{\mathcal{F}}_{N}(q^{\dag})$.
Then, there exists a constant $C_{N}>0$ such that if $\left\| q_{0} - q^{\dag} \right\| < \rho / C_{N}$, then the Levenberg–Marquardt iteration $q_{i}$ in (\ref{3.9}) with the $\{ \alpha_{i}\}_{i \in \mathbb{N}_{0}}$ defined in (\ref{morozov discrepancy principle}) converges to the solution $q^{\dag}$ as $i \to \infty$.
\end{lem}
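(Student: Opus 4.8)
The plan is to verify that the hypotheses of Theorem 4.2 of \cite{Kaltenbacher} hold for the forward operator $\vec{\mathcal{F}}_{N}$ on a suitable ball around $q_0$, the decisive ingredient being the \emph{tangential cone condition}, which I would extract from the Lipschitz stability (\ref{stab}) together with the $C^1$-regularity of Lemma \ref{Lemma 2.1}. First I would fix the admissible set: since $W$ is finite-dimensional, the closed ball $K:=\overline{B_{r}(q_{0})}$ is compact and convex, and by hypothesis $K \subset W \cap L^{\infty}_{+}(Q)$. Hence Lemma \ref{Lipchotz stability} applies and, for $N$ large, (\ref{stab}) holds with a constant $C_{N}>0$ for all $q_{1},q_{2}\in K$. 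Because all norms on the finite-dimensional space $W$ are equivalent, the $L^{2}(Q)$-stability may be used interchangeably with the $L^{\infty}(Q)$-neighborhood underlying Lemma \ref{Lemma 2.1}; note also that any concentric sub-ball of $K$ inherits (\ref{stab}) with the \emph{same} constant $C_{N}$.

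Next I would establish the tangential cone condition. Stacking the components of Lemma \ref{Lemma 2.1}(1), the map $\vec{\mathcal{F}}_{N}$ is $C^{1}$ on $W \cap L^{\infty}_{+}(Q)$ with continuous derivative $q\mapsto \vec{\mathcal{F}}_{N}^{\prime}[q]$, which is therefore uniformly continuous on the compact set $K$. Writing the linearization remainder in integral form,
\[
\vec{\mathcal{F}}_{N}(q_{1}) - \vec{\mathcal{F}}_{N}(q_{2}) - \vec{\mathcal{F}}_{N}^{\prime}[q_{1}](q_{1}-q_{2}) = \int_{0}^{1}\left( \vec{\mathcal{F}}_{N}^{\prime}[q_{2}+t(q_{1}-q_{2})] - \vec{\mathcal{F}}_{N}^{\prime}[q_{1}] \right)(q_{1}-q_{2})\, dt ,
\]
and using the modulus of continuity $\omega$ of the derivative on $K$ (so that $\omega(r)\to 0$ as $r\to 0$), I obtain
\[
\left\| \vec{\mathcal{F}}_{N}(q_{1}) - \vec{\mathcal{F}}_{N}(q_{2}) - \vec{\mathcal{F}}_{N}^{\prime}[q_{1}](q_{1}-q_{2}) \right\| \leq \omega(r)\, \left\| q_{1}-q_{2} \right\|_{L^{2}(Q)} .
\]
Inserting (\ref{stab}) to replace $\left\| q_{1}-q_{2} \right\|_{L^{2}(Q)}$ by $C_{N}\| \vec{\mathcal{F}}_{N}(q_{1}) - \vec{\mathcal{F}}_{N}(q_{2})\|$ yields
\[
\left\| \vec{\mathcal{F}}_{N}(q_{1}) - \vec{\mathcal{F}}_{N}(q_{2}) - \vec{\mathcal{F}}_{N}^{\prime}[q_{1}](q_{1}-q_{2}) \right\| \leq C_{N}\, \omega(r)\, \left\| \vec{\mathcal{F}}_{N}(q_{1}) - \vec{\mathcal{F}}_{N}(q_{2}) \right\| .
\]
After fixing $C_N$ from $K$, I would shrink the iteration radius to a concentric sub-ball on which $\eta:=C_{N}\,\omega(r)<1$; this is precisely the tangential cone condition with cone constant $\eta$.

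Finally I would invoke Theorem 4.2 of \cite{Kaltenbacher}. With the tangential cone condition just verified, the exact-data situation $\vec{u}^{\infty}=\vec{\mathcal{F}}_{N}(q^{\dag})$, and the regularization parameters $\{\alpha_{i}\}$ chosen by the Morozov discrepancy principle (\ref{morozov discrepancy principle}), that theorem guarantees that the Levenberg--Marquardt iterates (\ref{3.9}) stay in $B_{r}(q_{0})$, that the residuals decrease monotonically, and that $q_{i}\to q^{\dag}$ as $i\to\infty$, provided the initial error is small enough. The quantitative smallness threshold $\left\| q_{0}-q^{\dag} \right\|<\rho/C_{N}$ is the expression, in terms of the stability constant $C_{N}$ and the discrepancy factor $\rho$, of the requirement that $q_{0}$ lie in the basin of attraction and that the whole orbit remain in the region where $\eta<1$.

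The step I expect to be the main obstacle is the tangential cone verification, because it couples two quantities that both depend on the neighborhood: the stability constant $C_{N}$ from the compact set $K$ and the modulus of continuity $\omega(r)$ of the derivative. The care needed is to decouple them correctly---first fixing $K=\overline{B_{r}(q_{0})}$ to pin down $C_{N}$, then restricting to a smaller concentric ball (which retains the same $C_{N}$) on which $\omega$ is small enough to force $C_{N}\omega(r)<1$---while simultaneously ensuring $q^{\dag}$ and the entire iteration remain confined to that sub-ball. Matching the abstract, Hilbert-space hypotheses of \cite{Kaltenbacher} to the present $L^{2}$ setting (in particular the use of adjoints $\vec{\mathcal{F}}_{N}^{\prime}[q_{i}]^{*}$ and the norm equivalences on $W$) is then routine bookkeeping once the cone condition is in hand.
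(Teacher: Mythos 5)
Your overall strategy coincides with the paper's: the paper disposes of this lemma in one line, by applying Theorem 4.2 of \cite{Kaltenbacher} together with the finite-measurement Lipschitz stability of Lemma \ref{Lipchotz stability}, and your central idea --- use (\ref{stab}) to convert the Taylor remainder of $\vec{\mathcal{F}}_{N}$ into a tangential cone condition on the compact convex set $K=\overline{B_{r}(q_{0})}$ --- is exactly how that citation is meant to be fed.

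There is, however, a genuine mismatch in the key step. Theorem 4.2 of \cite{Kaltenbacher} (Hanke's convergence theorem for Levenberg--Marquardt with the parameter choice (\ref{morozov discrepancy principle})) assumes the \emph{product form} of the cone condition,
\begin{equation*}
\left\| \vec{\mathcal{F}}_{N}(q_{1}) - \vec{\mathcal{F}}_{N}(q_{2}) - \vec{\mathcal{F}}_{N}^{\prime}[q_{1}](q_{1}-q_{2}) \right\| \leq c \left\| q_{1}-q_{2} \right\| \left\| \vec{\mathcal{F}}_{N}(q_{1}) - \vec{\mathcal{F}}_{N}(q_{2}) \right\|,
\end{equation*}
and its smallness hypothesis is $c\left\| q_{0}-q^{\dag} \right\| < \rho$; this is precisely where the threshold $\rho/C_{N}$ in the lemma comes from ($C_{N}=c$), because the product form propagates along the iteration: $c\left\| q_{i}-q^{\dag} \right\| \leq c\left\| q_{0}-q^{\dag} \right\| < \rho$ by error monotonicity. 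Your argument instead lands on the ``$\eta$-form'' with constant $\eta = C_{N}\,\omega(r)$, which is not the hypothesis of the cited theorem, so you cannot invoke it as a black box; and $\eta<1$ is in any case too weak, since solvability of (\ref{morozov discrepancy principle}) for $\alpha_{i}$ and the decrease of the error require the linearized residual at $q^{\dag}$ to be at most $\rho$ times the nonlinear residual, i.e.\ $\eta \leq \rho$. Moreover, in your setup the constant $C_{N}$ of the conclusion has no identified meaning --- your closing paragraph acknowledges this but does not resolve it. The clean repair is to upgrade from uniform continuity to \emph{Lipschitz} continuity of $q \mapsto \vec{\mathcal{F}}_{N}^{\prime}[q]$ on $K$ (true for the far-field map, though it is strictly more than the $C^{1}$ statement of Lemma \ref{Lemma 2.1}, and is implicitly needed by the paper's citation as well): then your integral form of the remainder gives the quadratic bound $\frac{L}{2}\left\| q_{1}-q_{2} \right\|^{2}$, a single insertion of (\ref{stab}) yields the product form with $c=\frac{L}{2}$ times the stability constant of (\ref{stab}), and Theorem 4.2 applies verbatim, giving the lemma with exactly the stated threshold and no shrinking of the ball.
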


%%%%%%%%%%%%%%%%%%%%%%%%%%%%%%%%%%%%%%%%%%%%%%%%%%%%%%%%%%%%%%%%%%%%%%%%%%%%%%%%%%%%%%%%%%%%%%%%%%%%%%%%%%%%%%%%%%%%%%%%%%%%%%%%%%%%%%%%%%%%%%%%%%%%%%%%%%%%%%%%%%%%%%%%%%%%%%%%%%%%%%%%%%%%%%%%%%%%%%%%%%%%%%%%%%%%%%%%%%%%%%%%%%%%%%%%%%%%%%%%%%%%%%%%%%%%%%%%%%%%%%%%%%%%%%%%%%%%%%%%%%%%%%%%%%%%%%%%%%%%%%%%%%%%%%%%%%%%%%%%%%%%%%%%%%%%%%%%%%%%%%%%%%%%%%%%%%%%%%%%%%%%%%%%%%%%%%%%%%%%%%%%%%%%%%%%%%%%%%%%%%%%%%%%%%%%%%%%%%%%%%%%%%%%%%%%%%%%%%%%%%%%%%%%%%%%%%%%%%%%%%%%%%%%%%%%%%%%%%%%%%%%%%%%%%%%%%%%%%%%%%%%%%%%%%%%%%%%%%%%%%%%%
%%%%%%%%%%%%%%%%%%%%%%%%%%%%%%%%%%%%%%%%%%%%%%%%%%%%%%%%%%%%%%%%%%%%%%%%%%%%%%%%%%%%%%%%%%%%%%%%%%%%%%%%%%%%%%%%%%%%%%%%%%%%%%%%%%%%%%%%%%%%%%%%%%%%%%%%%%%%%%%%%%%%%%%%%%%%%%%%%%%%%%%%%%%%%%%%%%%%%%%%%%%%%%%%%%%%%%%%%%%%%%%%%%%%%%%%%%%%%%%%%%%%%%%%%%%%%%%%%%%%%%%%%%%%%%%%%%%%%%%%%%%%%%%%%%%%%%%%%%%%%%%%%
%%%%%%%%%%%%%%%%%%%%%%%%%%%%%%%%%%%%%%%%%%%%%%%%%%%%%%%%%%%%%%%%%%%%%%%%%%%%%%%%%%%%%%%%%%%%%%%%%%%%%%%%%%%%%%%%%%%%%%%%%%%%%%%%%%%
%%%%%%%%%%%%%%%%%%%%%%%%%%%%%%%%%%%%%%%%%%%%%%%%%%%%%%%%%%%%%%%%%%%%%%%%%%%%%%%%%%%%%%%%%%%%%%%%%%%%%%%%%%%%%%%%%%%%%%%%%%%%%%%%%%%%%%%%%%%%%%%%%%%%%%%%%
\subsection{Kalman filter}\label{Kalman filter}
In this section, we review the Kalman filter in a general functional analytic setting.
Let $X$ and $Y$ be Hilbert spaces over complex variables $\mathbb{C}$, $f_n \in Y$ ($n=1,...,N$) be a measurement, and $A_{n}:X \to Y$ ($n=1,...,N$) be a linear operator from $X$ to $Y$.
We consider the problem of determining $\varphi \in X$ such that 
\begin{equation}
A_n \varphi = f_n, \label{3.1}
\end{equation}
for all $n=1,...,N$. 
Now, we assume that we have the initial guess $\varphi_0 \in X$, which is the starting point of the algorithm, and that it was appropriately determined by a priori information of the true solution $\varphi^{true}$. 
Then, we consider the minimization problem of the following functional.
\begin{eqnarray}
J_{Full, N}(\varphi)&:=&\alpha \left\| \varphi - \varphi_0 \right\|^{2}_{X} + \left\| \vec{f} - \vec{A}\varphi \right\|^{2}_{Y^{N},  R^{-1}}
\nonumber\\
&=&\alpha \left\| \varphi - \varphi_0 \right\|^{2}_{X} + \sum_{n=1}^{N}\left\| f_n - A_n\varphi \right\|^{2}_{Y, R^{-1}}, \label{3.2}
\end{eqnarray}
where $\vec{f}:=\left(
    \begin{array}{cc}
      f_1 \\
      \vdots \\
      f_N
    \end{array}
  \right)$ and $\vec{A}:=\left(
    \begin{array}{cc}
      A_1 \\
      \vdots \\
      A_N
    \end{array}
  \right)$. 
The norm $\left\| \cdot \right\|^{2}_{Y, R^{-1}}:=\langle \cdot, R^{-1} \cdot \rangle_{Y}$ is a weighted norm with a positive definite symmetric invertible operator $R: Y \to Y$. 
The minimizer of (\ref{3.2}) is given by
\begin{equation}
\varphi^{FT}_{N} := \varphi_0 + (\alpha I + \vec{A}^{*}\vec{A})^{-1}\vec{A}^{*}\left(\vec{f} - \vec{A}\varphi_0 \right). \label{3.5}
\end{equation}
We call this the {\it Full data Tikhonov}. 
Here, $\vec{A}^{*}$ is the adjoint operator with respect to $\langle \cdot, \cdot \rangle_{X}$ and $\langle \cdot, \cdot \rangle_{Y^{N}, R^{-1}}$.
We calculate
\begin{eqnarray}
\langle \vec{f}, \vec{A} \varphi \rangle_{Y^N, R^{-1}} &=& \sum_{n=1}^{N} \langle f_n, R^{-1}A_n \varphi \rangle_{Y}
\nonumber\\
&=&\sum_{n=1}^{N} \langle A^{H}_n R^{-1} f_n, \varphi \rangle_{X} = \langle  \vec{A}^{H} R^{-1} \vec{f}, \varphi \rangle_{X}, \label{3.6}
\end{eqnarray}
which implies that 
\begin{equation}
\vec{A}^{*}=\vec{A}^{H} R^{-1}, \label{3.7}
\end{equation}
where $A_{n}^{H}$ and $\vec{A}^{H}$ are the adjoint operators with respect to the usual scalar products $\langle \cdot, \cdot \rangle_{X}$, $\langle \cdot, \cdot \rangle_{Y}$ and $\langle \cdot, \cdot \rangle_{X}$, $\langle \cdot, \cdot \rangle_{Y^{N}}$, respectively.
Then, the Full data Tikhonov solution in (\ref{3.5}) is of the form
\begin{equation}
\varphi^{FT}_{N} = \varphi_0 + \left( \alpha I + \vec{A}^{H} R^{-1}\vec{A} \right)^{-1}\vec{A}^{H} R^{-1}\left(\vec{f} - \vec{A}\varphi_0 \right). \label{3.8}
\end{equation}
\par
However, algorithm (\ref{3.8}) of the Full data Tikhonov is computationally expensive because we must construct a larger vector $\vec{f}$ and a large operator $\vec{A}$ when the number of measurements $N$ increases.
Accordingly, we consider the alternative approach based on the Kalman filter (see, e.g., \cite{grewal2010applications, Grewal, Jazwinski}), which is the algorithm give by the following algorithm:
\begin{equation}
\varphi^{KF}_{n}:= \varphi^{KF}_{n-1} + K_{n}\left( f_{n}-A_{n} \varphi^{KF}_{n-1} \right), \label{4.21}
\end{equation}
\begin{equation}
K_{n}:= B_{n-1} A^{H}_{n}\left(R + A_{n} B_{n-1} A^{H}_{n} \right)^{-1}, \label{4.22}
\end{equation}
\begin{equation}
B_{n}:= \left(I - K_{n} A_{n} \right)B_{n-1}, \label{4.23}
\end{equation}
for $n=1,...,N$, where $\varphi^{KF}_{0}:=\varphi_0$ and $B_{0}:=\frac{1}{\alpha}I$.
Here, $\varphi^{KF}_{n}$ is the unique minimizer of the following functional (see Section 7 of \cite{Freitag} and Section 5 of \cite{NakamuraPotthast}):
\begin{equation}
J_{KF,n}(\varphi):= \left\| \varphi - \varphi^{KF}_{n-1} \right\|^{2}_{X, B_{n-1}^{-1}} + \left\| f_n - A_n \varphi \right\|^{2}_{Y, R^{-1}}.
\end{equation}
\par
We observe that the Kalman filter algorithm updates state $\varphi$ every $n$ with measurement $f_{n}$ and one operator $A_n$, and it does not require large vectors or operators.
Instead, it updates both the state $\varphi$ in (\ref{4.21}) and weight $B$ of the norm in (\ref{4.23}), which plays the role of retaining the information from past updates. 
By the same argument in Theorem 5.4.7 of \cite{NakamuraPotthast}, we can prove the equivalence of the Full data Tikhonov and Kalman filter when all observation operators $A_n$ are linear.
\begin{lem} \label{equivalence}
For measurements $f_1,...,f_N$, linear operators $A_1,...,A_N$, and the initial guess $\varphi_0 \in X$, the final state of the Kalman filter given by (\ref{4.21})--(\ref{4.23}) is equivalent to the state of the Full data Tikhonov given by (\ref{3.8}), that is,
\begin{equation}
\varphi^{KF}_{N}=\varphi^{FT}_{N}. \label{4.24}
\end{equation}
\end{lem}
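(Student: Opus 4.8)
The plan is to prove the stronger, recursive statement by induction on $n$: for every $n=0,1,\dots,N$ the Kalman filter after $n$ steps reproduces the Tikhonov minimizer of the \emph{partial} functional $J_{Full,n}(\varphi):=\alpha\|\varphi-\varphi_0\|_X^2+\sum_{m=1}^n\|f_m-A_m\varphi\|_{Y,R^{-1}}^2$, and the weight $B_n$ is exactly the inverse Hessian of that functional. Concretely, I would carry along the two coupled hypotheses
\[ B_n^{-1}=\alpha I+\sum_{m=1}^n A_m^H R^{-1}A_m, \qquad \varphi_n^{KF}=\operatorname*{arg\,min}_{\varphi\in X} J_{Full,n}(\varphi). \]
The case $n=N$ then yields $\varphi_N^{KF}=\varphi_N^{FT}$, since $J_{Full,N}$ is the functional in (\ref{3.2}) whose minimizer is (\ref{3.8}). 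The base case $n=0$ is immediate: $B_0=\tfrac1\alpha I$ satisfies the first identity with an empty sum, and $\varphi_0^{KF}=\varphi_0$ minimizes $\alpha\|\varphi-\varphi_0\|_X^2=J_{Full,0}(\varphi)$.

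For the inductive step I would first upgrade the weight. Assuming the first hypothesis at $n-1$, I must show that the covariance recursion (\ref{4.23})--(\ref{4.22}) is equivalent to the information update $B_n^{-1}=B_{n-1}^{-1}+A_n^H R^{-1}A_n$. Since $R$ is positive definite and $A_n B_{n-1}A_n^H$ is self-adjoint and nonnegative, the operator $R+A_nB_{n-1}A_n^H$ is boundedly invertible, so $K_n$ in (\ref{4.22}) is well defined; substituting into (\ref{4.23}) and applying the Sherman--Morrison--Woodbury operator identity gives $B_n=(I-K_nA_n)B_{n-1}=\bigl(B_{n-1}^{-1}+A_n^H R^{-1}A_n\bigr)^{-1}$. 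Combined with the hypothesis at $n-1$, this is precisely the first hypothesis at $n$; it also shows $B_n$ is positive definite, so the weighted norm $\|\cdot\|_{X,B_n^{-1}}$ is meaningful.

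Next I would upgrade the state by completing the square. By the first hypothesis, $J_{Full,n-1}$ is a real-valued quadratic functional whose purely second-order part equals $\langle\varphi,B_{n-1}^{-1}\varphi\rangle_X$ and whose minimizer is $\varphi_{n-1}^{KF}$ (second hypothesis at $n-1$); expanding about this minimizer, the first-order terms vanish by stationarity and the remainder is exactly the weighted norm, so
\[ J_{Full,n-1}(\varphi)=J_{Full,n-1}(\varphi_{n-1}^{KF})+\|\varphi-\varphi_{n-1}^{KF}\|_{X,B_{n-1}^{-1}}^2. \]
Since $J_{Full,n}=J_{Full,n-1}+\|f_n-A_n\varphi\|_{Y,R^{-1}}^2$, adding the single new data term gives $J_{Full,n}(\varphi)=J_{KF,n}(\varphi)+\mathrm{const}$, where $J_{KF,n}$ is precisely the functional displayed above (\ref{4.21}) whose unique minimizer is $\varphi_n^{KF}$. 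Hence $\varphi_n^{KF}$ also minimizes $J_{Full,n}$, which is the second hypothesis at $n$, closing the induction.

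The main obstacle is the operator-inversion step: one has to justify the Woodbury identity in the complex Hilbert-space setting, verifying the invertibility of $R+A_nB_{n-1}A_n^H$ and the self-adjointness and positivity of each $B_n$, rather than treating it as a finite-matrix manipulation. The completing-the-square step is then routine, provided one is careful that the inner products are complex, so that cross terms appear as $2\,\mathrm{Re}\langle\cdot,\cdot\rangle_X$ and cancel by the stationarity of $\varphi_{n-1}^{KF}$. Alternatively one could bypass the variational argument by writing the normal equation $B_n^{-1}(\varphi_n^{FT}-\varphi_0)=\sum_{m=1}^n A_m^H R^{-1}(f_m-A_m\varphi_0)$ and checking the update (\ref{4.21}) against it directly, but the route above has the advantage of reusing the minimization property of $J_{KF,n}$ already recorded in the excerpt.
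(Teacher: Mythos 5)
Your proof is correct and takes essentially the same route as the argument the paper invokes (Theorem 5.4.7 of Nakamura--Potthast): induction over the measurement index, with the weight identity $B_n^{-1}=\alpha I+\sum_{m=1}^{n}A_m^H R^{-1}A_m$ established via the Woodbury identity, and a completing-the-square step showing that the partial full functional differs from $J_{KF,n}$ only by a constant, so their minimizers coincide. Your reliance on the paper's stated fact that $\varphi_n^{KF}$ uniquely minimizes $J_{KF,n}$ is legitimate, and the attention to invertibility of $R+A_nB_{n-1}A_n^H$ and to the complex inner-product cross terms fills in exactly the details the paper delegates to the reference.
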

%%%%%%%%%%%%%%%%%%%%%%%%%%%%%%%%%%%%%%%%%%%%%%%%%%%%%%%%%%%%%%%%%%%%%%%%%%%%%%%%%%%%%%%%%%%%%%%%%%%%%%%%%%%%%%%%%%%%%%%%%%%%%%%%%%%%%%%%%%%%%%%%%%%%%%%%%%%%%%%%%%%%%%%%%%%%%%%%%%%%
%%%%%%%%%%%%%%%%%%%%%%%%%%%%%%%%%%%%%%%%%%%%%%%%%%%%%%%%%%%%%%%%%%%%%%%%%%%%%%%%%%%%%%%%%%%%%%%%%%%%%%%%%%%%%%%%%%%%%%%%%%
%%%%%%%%%%%%%%%%%%%%%%%%%%%%%%%%%%%%%%%%%%%%%%%%%%%%%%%%%%%%%%%%%%%%%%%%%%%%%%%%%%%%%%%%%%%%%%%%%%%%%%%%%%%%%%%%%%%%%%%%%%%%%%%%%%%%%%%%%%%%%%%%%%%%%%%%%%%%%%%%%%%%%%%%%%%%%%%%%%%%%%%%%%%%%%%%%%%%%%%%%%%%%%%%%%%%%%%%%%%%%%%%%%%%%%%%%%%%%%%%%%%
%%%%%%%%%%%%%%%%%%%%%%%%%%%%%%%%%%%%%%%%%%%%%%%%%%%%%%%%%%%%%%%%%%%%%%%%%%%%%%%%%%%%%%%%%%%%%%%%%%%%%%%%%%%%%%%%%%%%%%%%%%%%%%%%%%%%%%%%%%%%%%%%%%%%%%%%%%%%%%%%%%%%%%%%%%%%%%%%%%%%%%%%%%%%%%%%%%%%%%%%%%%%%%%%%%%%%%%%%%%%%%%%%%%%%%%%%%%%%%%%%%%%%%%%%%%%%%%%%%%%%%%%%%%%%%%%%%%%%%%%%%%%%%%%%%%%%%%%%%%%%%%%%%%%%%%%%%%%%%%%%%%%%%%%%%%%%%%%%%%%%%%%%%%%%%%%%%%%%%%%%%%%%%%%%%%%%%%%%%%%%%%%%%%%%%%%%%%%%%%%%%%%%%%%%%%%%%%%%%%%
%%%%%%%%%%%%%%%%%%%%%%%%%%%%%%%%%%%%%%%%%%%%%%%%%%%%%%%%%%%%%%%%%%%%%%%%%%%%%%%%%%%%%%%%%%%%%%%%%%%%%%%%%%%%%%%%%%%%%%%%%%%%%%%%%%%%%%%%%%%%%%%%%%%%%%%%
\section{Kalman filter Levenberg–Marquardt}\label{Kalman filter Levenberg–Marquardt}
In this section, we propose a reconstruction algorithm based on the Kalman filter that is equivalent to the Levenberg–Marquardt algorithm.
We solve the following problem with respect to $q$: 
\begin{equation}
\mathcal{F}_{\theta_{n}}q=u^{\infty}_{\theta_n}, \ n=1,...,N. \label{IMSP}
\end{equation}
It is convenient to employ the vector notation as follows:
\begin{equation}
\vec{\mathcal{F}}q =\vec{u}^{\infty}, \label{IMSP-System}
\end{equation}
where 
$
\vec{\mathcal{F}}q:=\left(
\begin{array}{cc}
\mathcal{F}_{\theta_{1}}q \\
\vdots \\
\mathcal{F}_{\theta_{N}}q
\end{array}
\right)$ and 
$\vec{u}^{\infty}:=\left(
\begin{array}{cc}
u^{\infty}_{\theta_1} \\
\vdots \\
u^{\infty}_{\theta_N}
\end{array}
\right)
$.
\par
First, we review the derivation of the Levenberg-Marquardt scheme (\ref{3.9}) as follows.
We assume that we have an initial guess $q_{0}$ and consider the Taylor expansion at $q=q_0$.
\begin{equation}
\vec{\mathcal{F}}q = \vec{\mathcal{F}}q_0 + \vec{\mathcal{F}}^{\prime}[q_0](q - q_0) + r(q - q_0). \label{linearized problem}
\end{equation}
We forget the high-order term $r(q - q_0)$ and solve the linearized problem for (\ref{IMSP-System}):
\begin{equation}
\vec{\mathcal{F}}^{\prime}[q_0]q = \vec{u}^{\infty} - \vec{\mathcal{F}}q_0 +\vec{\mathcal{F}}^{\prime}[q_0]q_0,
\end{equation}
where
$
\vec{\mathcal{F}}^{\prime}[q_0]q:=\left(
\begin{array}{cc}
\mathcal{F}^{\prime}_{\theta_{1}}[q_0]q \\
\vdots \\
\mathcal{F}^{\prime}_{\theta_{N}}[q_0]q
\end{array}
\right) 
$.
Then, the Tikhonov regularization solution is given by
\begin{equation}
q_{1} := q_{0} + \left(\alpha_{0} I + \vec{\mathcal{F}}^{\prime}[q_{0}]^{*}\vec{\mathcal{F}}^{\prime}[q_{0}] \right)^{-1}\vec{\mathcal{F}}^{\prime}[q_{0}]^{*}\left(\vec{u}^{\infty} - \vec{\mathcal{F}}q_0 \right),
\end{equation}
where $\alpha_0 >0$ is a regularization parameter.
\par
Next, we solve the linearized problem for (\ref{IMSP-System}) with initial guess $q_{1}$:
\begin{equation}
\vec{\mathcal{F}}^{\prime}[q_1]q = \vec{u}^{\infty} - \vec{\mathcal{F}}q_1 +\vec{\mathcal{F}}^{\prime}[q_1]q_1.
\end{equation}
Then, the Tikhonov regularization solution is given by 
\begin{equation}
q_{2} := q_{1} + \left(\alpha_{1} I + \vec{\mathcal{F}}^{\prime}[q_{1}]^{*}\vec{\mathcal{F}}^{\prime}[q_{1}] \right)^{-1}\vec{\mathcal{F}}^{\prime}[q_{1}]^{*}\left(\vec{u}^{\infty} - \vec{\mathcal{F}}q_1 \right), \label{end LM}
\end{equation}
where $\alpha_1 >0$ is a regularization parameter.
Repeating the above arguments (\ref{linearized problem})--(\ref{end LM}), we have the iteration scheme for $i\in \mathbb{N}_0$:
\begin{equation}
q^{FLM}_{i+1} := q^{FLM}_{i} + \left(\alpha_{i} I + \vec{\mathcal{F}}^{\prime}[q^{FLM}_{i}]^{*}\vec{\mathcal{F}}^{\prime}[q^{FLM}_{i}] \right)^{-1} \vec{\mathcal{F}}^{\prime}[q^{FLM}_{i}]^{*}\left(\vec{u}^{\infty} - \vec{\mathcal{F}}q^{FLM}_i \right), \label{FTN}
\end{equation}
where $\{\alpha_{i} \}_{i \in \mathbb{N}_0}$ is a sequence of regularization parameters.
We call this the {\it Full data Levenberg–Marquardt} (FLM). 
Here, $\vec{\mathcal{F}}^{\prime}[q^{FLM}_{i}]^{*}$ is an adjoint operator of $\vec{\mathcal{F}}^{\prime}[q^{FLM}_{i}]$ with respect to the usual scalar product $\langle \cdot, \cdot \rangle_{L^{2}(Q)}$ and weighted scalar product $\langle \cdot, \cdot \rangle_{L^{2}(\mathbb{S}^{1})^{N}, R^{-1}}$, where $R: L^{2}(\mathbb{S}^{1}) \to L^{2}(\mathbb{S}^{1})$ is the positive definite symmetric invertible linear operator. 
By the same calculation as in (\ref{3.6}), we have
\begin{equation}
\vec{\mathcal{F}}^{\prime}[q^{FLM}_{i}]^{*}=\vec{\mathcal{F}}^{\prime}[q^{FLM}_{i}]^{H} R^{-1},
\end{equation}
where $\vec{\mathcal{F}}^{\prime}[q^{LM}_{i}]^{H}$ is an adjoint operator of $\vec{\mathcal{F}}^{\prime}[q^{LM}_{i}]$ with respect to usual scalar products $\langle \cdot, \cdot \rangle_{L^{2}(Q)}$ and $\langle \cdot, \cdot \rangle_{L^{2}(\mathbb{S}^{1})^{N}}$. 
Then, (\ref{FTN}) can be of the form
\begin{equation}
q^{FLM}_{i+1} = q^{FLM}_{i} + \left(\alpha_{i} I + \vec{\mathcal{F}}^{\prime}[q^{LM}_{i}]^{H} R^{-1}\vec{\mathcal{F}}^{\prime}[q^{LM}_{i}]\right)^{-1}\vec{\mathcal{F}}^{\prime}[q^{LM}_{i}]^{H} R^{-1}\left(\vec{u}^{\infty} - \vec{\mathcal{F}}q^{FLM}_{i} \right). \label{FTN-H}
\end{equation}
As stated in Section \ref{Kalman filter}, algorithm (\ref{FTN-H}) is computationally expensive when the number of measurements $N$ increases.
Accordingly, we consider the alternative approach based on the Kalman filter.
\par
We denote
\begin{equation}
q_{0,0} := q_{0} \ \ \mbox{and} \ \ B_{0,0} := \frac{1}{\alpha_{0}}I. \label{start KFL}
\end{equation}
We solve the linearized problem for (\ref{IMSP}) with initial guess $q_{0,0}$:
\begin{equation}
\mathcal{F}^{\prime}_{\theta_{n}}[q_{0,0}]q =u^{\infty}_{\theta_{n}} - \mathcal{F}_{\theta_{n}}q_{0,0} + \mathcal{F}^{\prime}_{\theta_{n}}[q_{0,0}]q_{0,0}, \label{linearizedeq1}
\end{equation}
for $n=1,...,N$.
Applying the Kalman filter update (\ref{4.21})--(\ref{4.23}) as 
\begin{equation}
f_{n}=u^{\infty}_{\theta_{n}} - \mathcal{F}_{\theta_{n}}q_{0,0} + \mathcal{F}^{\prime}_{\theta_{n}}[q_{0,0}]q_{0,0}, \ \  A_{n} = \mathcal{F}^{\prime}_{\theta_{n}}[q_{0,0}], \ \ \varphi_{0} = q_{0,0}, \ \ \mbox{and} \ \ B_{0}=B_{0,0},
\end{equation}
we obtain the algorithm for $n=1,...,N$,
\begin{equation}
q_{0, n}:= q_{0, n-1} + K_{0, n}\left( u^{\infty}_{\theta_{n}} - \mathcal{F}_{\theta_{n}}q_{0,0} + \mathcal{F}^{\prime}_{\theta_{n}}[q_{0,0}]q_{0,0} - \mathcal{F}_{\theta_{n}}[q_{0,0}] q_{0, n-1} \right),
\end{equation}
\begin{equation}
K_{0, n}:= B_{0, n-1}\mathcal{F}^{\prime}_{\theta_{n}}[q_{0,0}]^{H}\left(R + \mathcal{F}^{\prime}_{\theta_{n}}[q_{0,0}] B_{0, n-1} \mathcal{F}^{\prime}_{\theta_{n}}[q_{0,0}]^{H} \right)^{-1}, 
\end{equation}
\begin{equation}
B_{0, n}:= \left(I - K_{0, n} \mathcal{F}^{\prime}_{\theta_{n}}[q_{0,0}] \right)B_{0, n-1}.
\end{equation}
\par
Next, we denote
\begin{equation}
q_{1,0} := q_{0, N} \ \ \mbox{and} \ \ B_{1,0} := \frac{1}{\alpha_{1}}I,    
\end{equation}
We solve the linearized problem for (\ref{IMSP}) with initial guess $q_{1,0}$:
\begin{equation}
\mathcal{F}^{\prime}_{\theta_{n}}[q_{1,0}]q = u^{\infty}_{\theta_{n}} - \mathcal{F}_{\theta_{n}}q_{1,0} + \mathcal{F}^{\prime}_{\theta_{n}}[q_{1,0}]q_{1,0},
\end{equation}
for $n=1,...,N$. Applying the Kalman filter update (\ref{4.21})--(\ref{4.23}) as 
\begin{equation}
f_{n}=u^{\infty}_{\theta_{n}} - \mathcal{F}_{\theta_{n}}q_{1,0} + \mathcal{F}^{\prime}_{\theta_{n}}[q_{1,0}]q_{1,0}, \ \  A_{n} = \mathcal{F}^{\prime}_{\theta_{n}}[q_{1,0}], \ \ \varphi_{0} = q_{1,0}, \ \ \mbox{and} \ \ B_{0}=B_{1,0},
\end{equation}
we obtain the algorithm for $n=1,...,N$:
\begin{equation}
q_{1, n}:= q_{1, n-1} + K_{1, n}\left( u^{\infty}_{\theta_{n}} - \mathcal{F}_{\theta_{n}}q_{1,0} + \mathcal{F}^{\prime}_{\theta_{n}}[q_{1,0}]q_{1,0} - \mathcal{F}_{\theta_{n}}[q_{1,0}] q_{1, n-1} \right),
\end{equation}
\begin{equation}
K_{1, n}:= B_{1, n-1}\mathcal{F}^{\prime}_{\theta_{n}}[q_{1,0}]^{H}\left(R + \mathcal{F}^{\prime}_{\theta_{n}}[q_{1,0}] B_{1, n-1} \mathcal{F}^{\prime}_{\theta_{n}}[q_{1,0}]^{H} \right)^{-1},  
\end{equation}
\begin{equation}
B_{1, n}:= \left(I - K_{1, n} \mathcal{F}^{\prime}_{\theta_{n}}[q_{1,0}] \right)B_{1, n-1}.\label{end KFL}
\end{equation}
Repeating the above arguments (\ref{start KFL})--(\ref{end KFL}), we obtain the following algorithm for $n=1,...,N$ and $i \in \mathbb{N}_0$:
\begin{equation}
q^{KFL}_{i, n}:= q^{KFL}_{i, n-1} + K_{i, n}\left( u^{\infty}_{\theta_{n}} - \mathcal{F}_{\theta_{n}}q^{KFL}_{i,0} + \mathcal{F}^{\prime}_{\theta_{n}}[q^{KFL}_{i,0}]q^{KFL}_{i,0} - \mathcal{F}_{\theta_{n}}[q^{KFL}_{i,0}] q^{KFL}_{i, n-1} \right),  \label{KFL-1}
\end{equation}
\begin{equation}
K_{i, n}:= B_{i, n-1}\mathcal{F}^{\prime}_{\theta_{n}}[q^{KFL}_{i,0}]^{H}\left(R + \mathcal{F}^{\prime}_{\theta_{n}}[q^{KFL}_{i,0}] B_{i, n-1} \mathcal{F}^{\prime}_{\theta_{n}}[q^{KFL}_{i,0}]^{H} \right)^{-1},  \label{KFL-2}
\end{equation}
\begin{equation}
B_{i, n}:= \left(I - K_{i, n} \mathcal{F}^{\prime}_{\theta_{n}}[q^{KFL}_{i, 0}] \right)B_{i, n-1}, \label{KFL-3}
\end{equation}
where 
\begin{equation}
q^{KFL}_{i, 0}:=q^{KFL}_{i-1,N},
\end{equation}
\begin{equation}
B_{i,0}:=\frac{1}{\alpha_{i}}I. \label{KFL-B}
\end{equation}
We call this the {\it Kalman filter Levenberg–Marquardt} (KFL).
We remark that the algorithm has indexes $i$ and $n$, where $i$ is associated with the iteration step and $n$ with the measurement step, respectively.
\par
Finally, we show that the KFL is equivalent to the FLM.
\begin{thm}\label{equivalence for KFN and FTN}
For the initial guess $q_{0} \in L^{2}(Q)$ and sequence $\{\alpha_{i}\}_{i \in \mathbb{N}_{0}}$ of the regularization parameters, the Kalman filter Levenberg–Marquardt (\ref{KFL-1})--(\ref{KFL-B}) is equivalent to the Full data Levenberg–Marquardt given by (\ref{FTN-H}), that is, for all $i \in \mathbb{N}_0$, we have
\begin{equation}
q^{KFL}_{i, N}=q^{FLM}_{i+1}.  \label{eqiv-KFL}
\end{equation}
\end{thm}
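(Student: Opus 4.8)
The plan is to reduce each outer iteration step of the KFL to a single step of the FLM by invoking the already-established equivalence between the Kalman filter and the Full data Tikhonov (Lemma \ref{equivalence}), and then to close the argument by induction on the iteration index $i$.

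First I would fix an iteration step $i \in \mathbb{N}_{0}$ and observe that the inner loop (\ref{KFL-1})--(\ref{KFL-3}), running over $n=1,\dots,N$, is exactly the Kalman filter (\ref{4.21})--(\ref{4.23}) applied to the linear system $A_{n}\varphi = f_{n}$ with data
\[
A_{n}=\mathcal{F}^{\prime}_{\theta_{n}}[q^{KFL}_{i,0}], \qquad f_{n}=u^{\infty}_{\theta_{n}} - \mathcal{F}_{\theta_{n}}q^{KFL}_{i,0} + \mathcal{F}^{\prime}_{\theta_{n}}[q^{KFL}_{i,0}]q^{KFL}_{i,0},
\]
initial guess $\varphi_{0}=q^{KFL}_{i,0}$, and initial weight $B_{0}=B_{i,0}=\frac{1}{\alpha_{i}}I$ from (\ref{KFL-B}). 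Lemma \ref{equivalence} then identifies the final inner state $q^{KFL}_{i,N}$ with the Full data Tikhonov minimizer (\ref{3.8}) assembled from $\vec{\mathcal{F}}^{\prime}[q^{KFL}_{i,0}]$ and $\vec{f}$ with regularization parameter $\alpha_{i}$.

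Next I would carry out the one cancellation that does the real work. Componentwise, $f_{n}-A_{n}\varphi_{0}=u^{\infty}_{\theta_{n}}-\mathcal{F}_{\theta_{n}}q^{KFL}_{i,0}$, since the affine shift $\mathcal{F}^{\prime}_{\theta_{n}}[q^{KFL}_{i,0}]q^{KFL}_{i,0}$ contained in $f_{n}$ is precisely cancelled by $A_{n}\varphi_{0}$; hence $\vec{f}-\vec{\mathcal{F}}^{\prime}[q^{KFL}_{i,0}]\varphi_{0}=\vec{u}^{\infty}-\vec{\mathcal{F}}q^{KFL}_{i,0}$. Substituting this into (\ref{3.8}) collapses the Full data Tikhonov into
\[
q^{KFL}_{i,N}=q^{KFL}_{i,0}+\left(\alpha_{i}I+\vec{\mathcal{F}}^{\prime}[q^{KFL}_{i,0}]^{H}R^{-1}\vec{\mathcal{F}}^{\prime}[q^{KFL}_{i,0}]\right)^{-1}\vec{\mathcal{F}}^{\prime}[q^{KFL}_{i,0}]^{H}R^{-1}\left(\vec{u}^{\infty}-\vec{\mathcal{F}}q^{KFL}_{i,0}\right),
\]
which is exactly one FLM step (\ref{FTN-H}) issued from the base point $q^{KFL}_{i,0}$.

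Finally I would conclude by induction on $i$. For $i=0$ the base point is $q^{KFL}_{0,0}=q_{0}=q^{FLM}_{0}$, so the displayed identity gives $q^{KFL}_{0,N}=q^{FLM}_{1}$. For the inductive step, the KFL bookkeeping $q^{KFL}_{i,0}=q^{KFL}_{i-1,N}$ together with the hypothesis $q^{KFL}_{i-1,N}=q^{FLM}_{i}$ shows that both schemes share the base point $q^{FLM}_{i}$ at step $i$; the displayed identity then yields $q^{KFL}_{i,N}=q^{FLM}_{i+1}$, establishing (\ref{eqiv-KFL}). I do not expect a genuine obstacle here, as the entire analytic difficulty is already absorbed into Lemma \ref{equivalence}; what remains is the bookkeeping of matching the Kalman-filter data to the FLM linearization and checking the affine cancellation. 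The single point to watch is that the weight is reset via $B_{i,0}=\frac{1}{\alpha_{i}}I$ at the start of every outer step, so that each FLM step carries its own parameter $\alpha_{i}$ rather than an accumulated weight.
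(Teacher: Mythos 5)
Your proof is correct and takes essentially the same route as the paper's: induction on the outer index $i$, with each inner sweep over $n=1,\dots,N$ identified via Lemma \ref{equivalence} as the Full data Tikhonov solution of the linearized problem at $q^{KFL}_{i,0}$ with parameter $\alpha_i$. You additionally spell out the affine cancellation $f_n - A_n\varphi_0 = u^{\infty}_{\theta_n} - \mathcal{F}_{\theta_n}q^{KFL}_{i,0}$, which the paper leaves implicit, and you correctly reset the weight with $\alpha_i$ at step $i$ (the paper's inductive step writes $\alpha_{i-1}$, apparently a typo).
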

\begin{proof}
We prove (\ref{eqiv-KFL}) by induction.
Applying Lemma \ref{equivalence} to the linearized problem (\ref{linearizedeq1}) with the regularization parameter $\alpha_0$, we have 
\begin{equation}
q^{KFL}_{0, N}=q^{FLM}_{1},
\end{equation}
which is the base case $i=0$.
\par
Assume that (\ref{eqiv-KFL}) $i-1$ holds, that is, we have 
\begin{equation}
q^{KFL}_{i-1, N}=q^{FLM}_{i}=:q_{i}.
\end{equation}
Again, applying Lemma \ref{equivalence} to the linearized problem ($n=1,...,N$)
\begin{equation}
\mathcal{F}^{\prime}_{\theta_{n}}[q_{i}]q =u^{\infty}_{\theta_{n}} -  \mathcal{F}_{\theta_{n}}q_{i} + \mathcal{F}^{\prime}_{\theta_{n}}[q_{i}]q_{i}, \label{linearizedeq2}
\end{equation}
with the regularization parameter $\alpha_{i-1}$, we have
\begin{equation}
q^{KFL}_{i, N}=q^{FLM}_{i+1}.
\end{equation}
Theorem \ref{equivalence for KFN and FTN} has been shown.
\end{proof}
\begin{rem}
We note that by the equivalence with FLM (Theorem \ref{equivalence for KFN and FTN}) and the convergence of Levenberg-Marquardt (Lemma \ref{convergence LM}), the KFL algorithm (\ref{KFL-1})--(\ref{KFL-3}) converges under some assumption.
\end{rem}
\par
Figure \ref{KFL-fig} illustrates the FLM and KFL.
While the FLM only moves horizontally, the KFL first moves vertically.
Once it was moved up to $n=N$, it moves horizontally, and then the linearization is complete.
%%%%%%%%%%%%%%%%%%%%%%%%%%%%%%%%%%%%%%%%%%%%%%%%%%%%%%%%%%%%%%%%%%%%%%%%%%%%%%%%%%%%%%%%%%%%%%%%%%%%%%%%%%%%%%%%%%%%%%%%%%%%%%%%%%%%%%%%%%%%%%%%%%%%%%%%%%%%%%%%%%%%%%%%%%%%%%%%%%%%%%%%%%%%%%%%%%%%%%%%%%%%%%%%%%%%%%%%%%%%%%%%%%%%%%%%%%%%%%%%%%%%%%%%%%%%%%%%%%%%%%%%%%%%%%%%%%%%%%%%%%%%%%%%
\begin{figure}[h]
\centering
\includegraphics[keepaspectratio, scale=0.7]{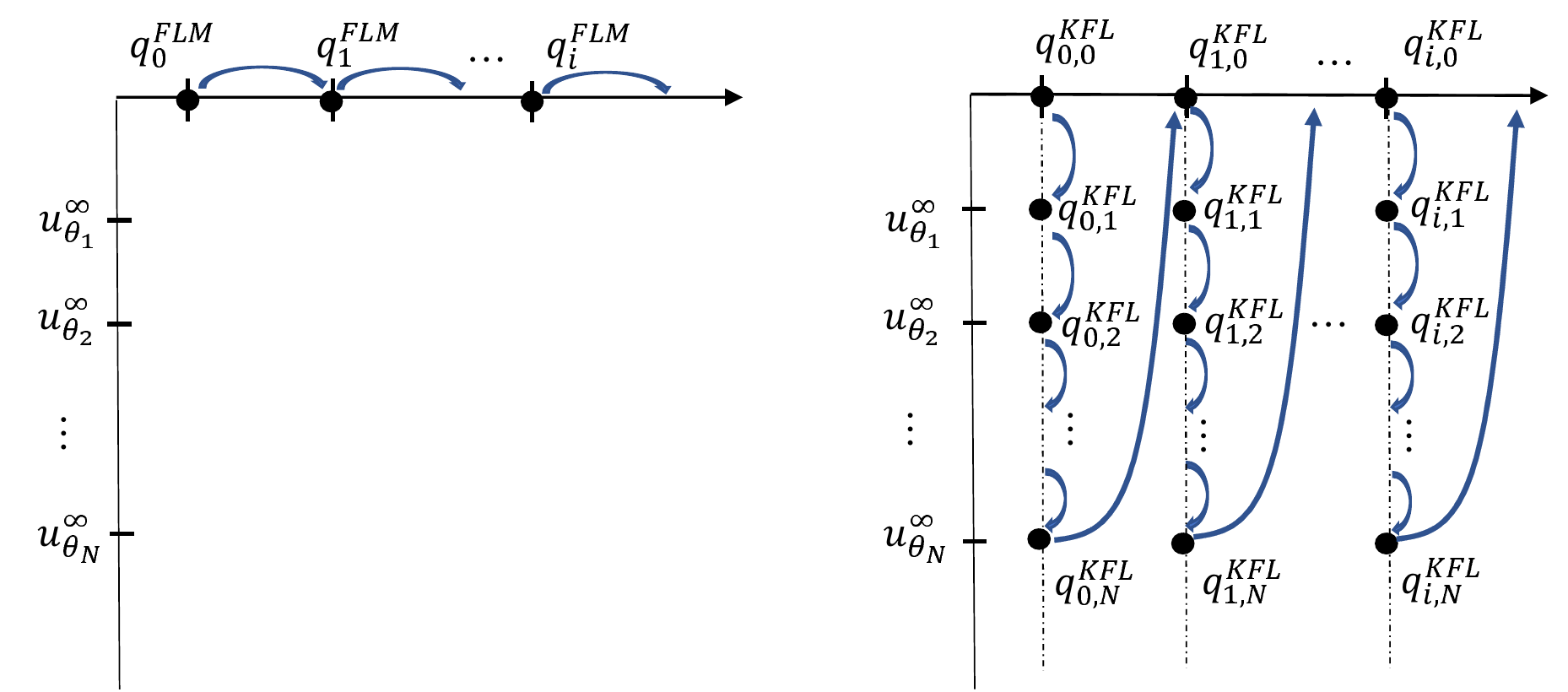}
\caption{Illustration of FLM (left) and KFL (right)}
\label{KFL-fig}
\end{figure}
%%%%%%%%%%%%%%%%%%%%%%%%%%%%%%%%%%%%%%%%%%%%%%%%%%%%%%%%%%%%%%%%%%%%%%%%
%%%%%%%%%%%%%%%%%%%%%%%%%%%%%%%%%%%%%%%%%%%%%%%%%%%%%%%%%%%%%%%%%%%%%%%%%%%%%%%%%%%%%%%%%%%%%%%%%%%%%%%%%%%%%%%%%%%%%%%%%%%%%%%%%%%%%%%%%%%%%%%%%%%%%%%%%%%%%%%%%%%%%%%%%%%%%%%%%%%%%%%%%%%%%%%%%%%%%%%%%%%%%%%%%%%%%%%%%%%%%%%%%%%%%%%%%%%%%%%%%%%%%%%%%%%%%%%%%%%%%%%%%%%%%%%%%%%%%%%%%%%%%%%%%%%%%%%%%%%%%%%%%%%%%%%%%%%%%%%%%%%%%%%%%%%%%%%%%%%%%%%%%%%%%%%%%%%%%%%%%%%%%%%%%%%%%%%%%%%%%%%%%%%%%%%%%%%%%%%%%%%%%%%%%%%%%%%%%%%%%%%%%%%%%%%%
\section{Iterative Extended Kalman filter}\label{Iterative Extended Kalman filter}
In this section,
we propose the algorithm inspired by the Extended Kalman filter (see, e.g., \cite{grewal2010applications, Grewal, Jazwinski}). 
We denote
\begin{equation}
q_{0,0} := q_{0} \ \ \mbox{and} \ \ B_{0,0} := \frac{1}{\alpha_{0}}I. \label{start EKF}
\end{equation}
We solve the linearized problem for the equation
\begin{equation}
u^{\infty}_{\theta_{1}} =\mathcal{F}_{\theta_{1}}q,
\end{equation}
with respect to $n=1$ and initial guess $q_{0,0}$, that is,
\begin{equation}
\mathcal{F}^{\prime}_{\theta_{1}}[q_{0,0}]q = u^{\infty}_{\theta_{1}} - \mathcal{F}_{\theta_{1}}q_{0,0} + \mathcal{F}^{\prime}_{\theta_{1}}[q_{0,0}]q_{0,0},
\end{equation}
which is equivalent to solving the minimization problem of the following functional:
\begin{equation}
J_{0,0}(q):= \left\| q - q_{0,0} \right\|^{2}_{X, B_{0,0}^{-1}} + \left\| u^{\infty}_{\theta_{1}} - \mathcal{F}_{\theta_{1}}q_{0,0} + \mathcal{F}^{\prime}_{\theta_{1}}[q_{0,0}]q_{0,0} - \mathcal{F}^{\prime}_{\theta_{1}}[q_{0,0}] q \right\|^{2}_{Y, R^{-1}}.
\end{equation}
By the same argument as in Section 7 of \cite{Freitag}, the Tikhonov regularization solution has the following form:
\begin{equation}
q_{0,1}:= q_{0,0} + K_{0,1}\left( u^{\infty}_{\theta_{1}} - \mathcal{F}_{\theta_{1}}q_{0,0} \right),
\end{equation}
\begin{equation}
K_{0,1}:= B_{0,0} \mathcal{F}^{\prime}_{\theta_{1}}[q_{0,0}]^{H}\left(R + \mathcal{F}^{\prime}_{\theta_{1}}[q_{0,0}] B_{0,0} \mathcal{F}^{\prime}_{\theta_{1}}[q_{0,0}]^{H} \right)^{-1}, 
\end{equation}
\begin{equation}
B_{0,1}:= \left(I - K_{0,1} \mathcal{F}^{\prime}_{\theta_{1}}[q_{0,0}] \right)B_{0,0}.
\end{equation}
\par 
Next, we solve the linearized problem for the equation
\begin{equation}
u^{\infty}_{\theta_{2}} =\mathcal{F}_{\theta_{2}}q,
\end{equation}
with respect to $n=2$ and initial guess $q_{0,1}$, that is,
\begin{equation}
\mathcal{F}^{\prime}_{\theta_{2}}[q_{0,1}]q = u^{\infty}_{\theta_{2}} - \mathcal{F}_{\theta_{2}}q_{0,1} + \mathcal{F}^{\prime}_{\theta_{2}}[q_{0,1}]q_{0,1},
\end{equation}
which is equivalent to solving the minimization problem of the following functional:
\begin{equation}
J_{0,1}(q):= \left\| q - q_{0,1} \right\|^{2}_{X, B_{0,1}^{-1}} + \left\| u^{\infty}_{\theta_{2}} - \mathcal{F}_{\theta_{2}}q_{0,1} + \mathcal{F}^{\prime}_{\theta_{2}}[q_{0,1}]q_{0,1} - \mathcal{F}^{\prime}_{\theta_{2}}[q_{0,1}] q \right\|^{2}_{Y, R^{-1}}.
\end{equation}
The Tikhonov regularization solution has the following form:
\begin{equation}
q_{0,2}:= q_{0,1} + K_{0,2}\left( u^{\infty}_{\theta_{2}} - \mathcal{F}_{\theta_{2}}q_{0,1} \right),
\end{equation}
\begin{equation}
K_{0,2}:= B_{0,1} \mathcal{F}^{\prime}_{\theta_{2}}[q_{0,1}]^{H}\left(R + \mathcal{F}^{\prime}_{\theta_{2}}[q_{0,1}] B_{0,1} \mathcal{F}^{\prime}_{\theta_{2}}[q_{0,1}]^{H} \right)^{-1},
\end{equation}
\begin{equation}
B_{0,2}:= \left(I - K_{0,2} \mathcal{F}^{\prime}_{\theta_{2}}[q_{0,1}] \right)B_{0,1}. \label{end EKF}
\end{equation}
Repeating the above arguments (\ref{start EKF})--(\ref{end EKF}), we obtain the algorithm for $n=1,...,N$:
\begin{equation}
q_{0,n}:= q_{0,n-1} + K_{0,n-1}\left( u^{\infty}_{\theta_{n}} - \mathcal{F}_{\theta_{n}}q_{n-1} \right), 
\end{equation}
\begin{equation}
K_{0,n}:= B_{0,n-1} \mathcal{F}^{\prime}_{\theta_{n}}[q_{0,n-1}]^{H}\left(R + \mathcal{F}^{\prime}_{\theta_{n}}[q_{0,n-1}] B_{0,n-1} \mathcal{F}^{\prime}_{\theta_{n}}[q_{0,n-1}]^{H} \right)^{-1}, 
\end{equation}
\begin{equation}
B_{0,n}:= \left(I - K_{0,n} \mathcal{F}^{\prime}_{\theta_{n}}[q_{0,n-1}] \right)B_{0,n-1}.
\end{equation}
\par
Next, we denote
\begin{equation}
q_{1,0}:= q_{0,N}, \ \ \mbox{and} \ \ B_{1,0} := \frac{1}{\alpha_{1}}I.
\end{equation}
We solve the linearized problem for $u^{\infty}_{\theta_{1}} =\mathcal{F}_{\theta_{1}}q$ with respect to $n=1$ and initial guess $q_{1,0}$, that is,
\begin{equation}
\mathcal{F}^{\prime}_{\theta_{1}}[q_{1,0}]q=u^{\infty}_{\theta_{1}} - \mathcal{F}_{\theta_{1}}q_{1,0} + \mathcal{F}^{\prime}_{\theta_{1}}[q_{1,0}]q_{1,0}.
\end{equation}
The Tikhonov regularization solution has the following form:
\begin{equation}
q_{1,1}:= q_{1,0} + K_{1}\left( u^{\infty}_{\theta_{1}} - \mathcal{F}_{\theta_{1}}q_{1,0} \right), 
\end{equation}
\begin{equation}
K_{1,1}:= B_{1,0} \mathcal{F}^{\prime}_{\theta_{1}}[q_{1,0}]^{H}\left(R + \mathcal{F}^{\prime}_{\theta_{1}}[q_{1,0}] B_{1,0} \mathcal{F}^{\prime}_{\theta_{1}}[q_{1,0}]^{H} \right)^{-1}, 
\end{equation}
\begin{equation}
B_{1,1}:= \left(I - K_{1,1} \mathcal{F}^{\prime}_{\theta_{1}}[q_{1,0}] \right)B_{1,0}.
\end{equation}
By solving the linearized problem for $u^{\infty}_{\theta_{n}} =\mathcal{F}_{\theta_{n}}q$ up to $n=N$, we obtain $q_{1,N}$, $B_{1,N}$, and denote $q_{2,0}:=q_{1,N}$, $B_{2,0}:= \frac{1}{\alpha_{2}}I,$.
Repeating the above arguments, we finally obtain the following algorithm for $n=1,...,N$ and $i \in \mathbb{N}_0$:
\begin{equation}
q^{EKF}_{i, n}:= q^{EKF}_{i, n-1} + K_{i, n-1}\left( u^{\infty}_{\theta_{n}} - \mathcal{F}_{\theta_{n}}q^{EKF}_{i, n-1} \right), \label{EKF-1}
\end{equation}
\begin{equation}
K_{i, n}:= B_{i,n-1} \mathcal{F}^{\prime}_{\theta_{n}}[q^{EKF}_{i,n-1}]^{H}\left(R + \mathcal{F}^{\prime}_{\theta_{n}}[q^{EKF}_{i,n-1}] B_{i,n-1} \mathcal{F}^{\prime}_{\theta_{n}}[q^{EKF}_{i,n-1}]^{H} \right)^{-1}, \label{EKF-2}
\end{equation}
\begin{equation}
B_{i,n}:= \left(I - K_{i,n} \mathcal{F}^{\prime}_{\theta_{n}}[q^{EKF}_{i,n-1}] \right)B_{i,n-1}, \label{EKF-3}
\end{equation}
where
\begin{equation}
q^{EKF}_{i, 0}:=q^{EKF}_{i-1,N},
\end{equation}
\begin{equation}
B_{i,0}:=\frac{1}{\alpha_{i}}I. \label{EKF-B}
\end{equation}
We call this the {\it iterative Extended Kalman filter} (EKF).
As remarked in Section \ref{Kalman filter Levenberg–Marquardt}, the algorithm has indexes $i$ and $n$, where $i$ is associated with the iteration step and $n$ with the measurement step, respectively.
Figure \ref{EKF-fig} illustrates the EKF. 
EKF always moves diagonally because linearization is performed in every measurement step.
%%%%%%%%%%%%%%%%%%%%%%%%%%%%%%%%%%%%%%%%%%%%%%%%%%%%%%%%%%%%%%%%%%%%%%%%%%%%%%%%%%%%%%%%%%%%%%%%%%%%%%%%%%%%%%%%%%%%%%%%%%%%%%%%%%%%%%%%%%%%%%%%%%%%%%%%%%%%%%%%%%%%%%%%%%%%%%%%%%%%%%%%%%%%%%%%%%%%%%%%%%%%%%%%%%%%%%%%%%%%%%%%%%%%%%%%%%%%%%%%%%%%%%%%%%
%%%%%%%%%%%%%%%%%%%%%%%%%%%%%%%%%%%%%%%%%%%%%%%%%%%%%%%%%%%%%%%%%%%%%%%%
\begin{figure}[h]
\centering
\includegraphics[keepaspectratio, scale=0.6]{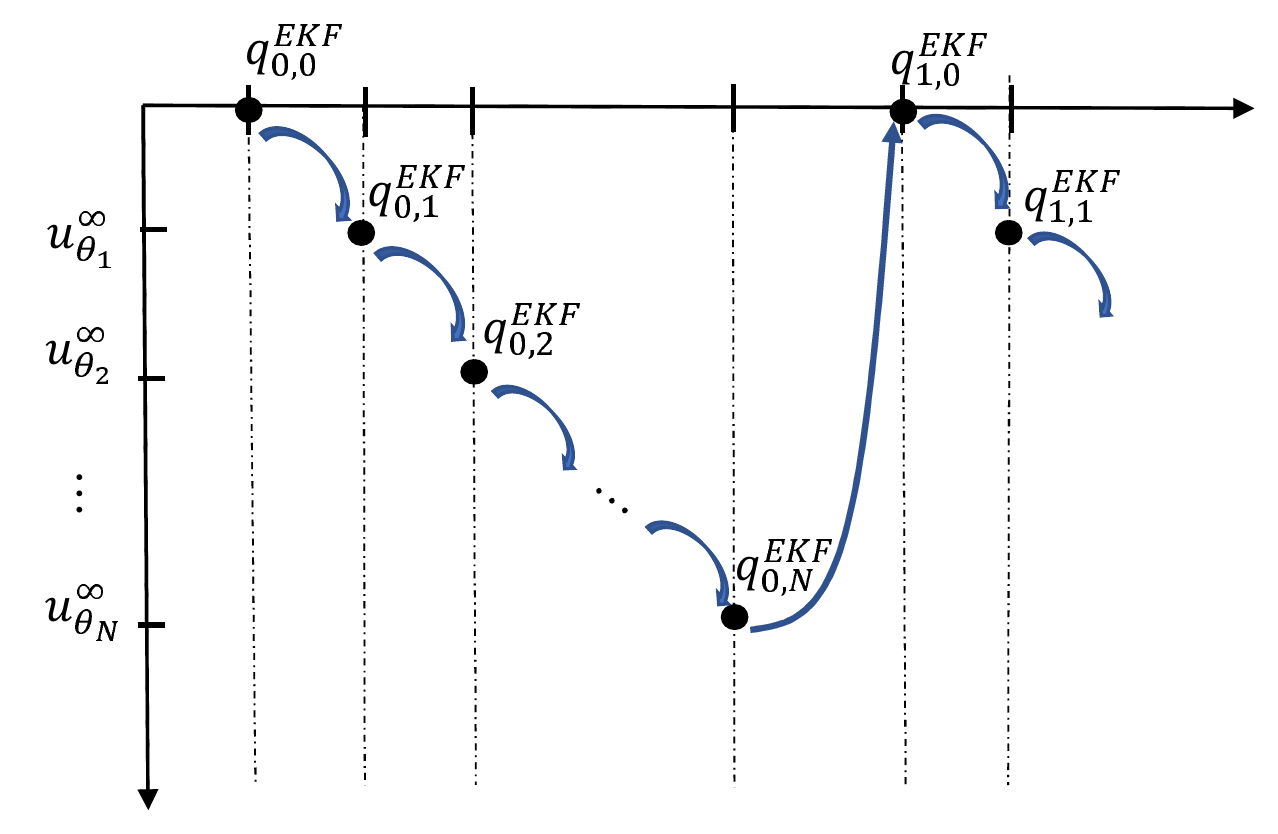}
\caption{Illustration of EKF}
\label{EKF-fig}
\end{figure}
%%%%%%%%%%%%%%%%%%%%%%%%%%%%%%%%%%%%%%%%%%%%%%%%%%%%%%%%%%%%%%%%%%%%%%%%
\begin{rem}\label{EKF and KFL}
We compare the KFL with the EKF.
KFL is based on the linearization at the initial state for each iteration step, whereas EKF is based on the linearization at the current state for every iteration step, implying that the update of the KFL is slower than that of the EKF.
\end{rem}
%%%%%%%%%%%%%%%%%%%%%%%%%%%%%%%%%%%%%%%%%%%%%%%%%%%%%%%%%%%%%%%%%%%%%%%%%%%%%%%%%%%%%%%%%%%%%%%%%%%%%%%%%%%%%%%%%%%%%
\begin{rem}\label{update B}
In both the KFL and EKF algorithms, instead of initializations (\ref{KFL-B}) and (\ref{EKF-B}), we can update the weight of the norm for each iteration step, that is,
\begin{equation}
B_{i,0}:=B_{i-1,N}, \label{update weight B}
\end{equation}
which plays a role in retaining the information of past updates as iteration step $i$ proceeds. 
\end{rem}
%%%%%%%%%%%%%%%%%%%%%%%%%%%%%%%%%%%%%%%%%%%%%%%%%%%%%%%%%%%%%%%%%%%%%%%%%%%%%%%%%%%%%%%%%%%%%%%%%%%%%%%%%%%%%%%%%%%%%%%%%%%%%%%%%%%%%%%%%%%%%%%%%%%%%%%%%%%%%%%%%%%%%%%%%%%%%%%%%%%%%%%%%%%%%%%%%%%%%%%%%%%%%%%%%%%%%%%%%%%%%%%%%%%%%%%%%%%%%%%%%%%%%%%%%%%%%%%%%%%%%%%%%%%%%%%%%%%%%%%%%%%%%%%%%%%%%%%%%%%%%%%%%%%%%%%%%%%%%%%%%%%%%%%%%%%%%%%%%%%%%%%%%%%%%%%%%%%%%%%%%%%%%%%%%%%%%%%%%%%%%%%%%%%%%%%%%%%%%%%%%%%%%%%%%%%%%%%%%%%%
%%%%%%%%%%%%%%%%%%%%%%%%%%%%%%%%%%%%%%%%%%%%%%%%%%%%%%%%%%%%%%%%%%%%%%%%%%%%%%%%%%%%%%%%%%%%%%%%%%%%%%%%%%%%%%%%%%%%%%%%%%%%%%%%%%%%%%%%%%%%%%%%%%%%%%%%%%%%%%%%%%%%%%%%%%%%%%%%%%%%%%%%%%%%%%%%%%%%%%%%%%%%%%%%%%%%%%%%%%%%%%%%%%%%%%%%%%%%%%%%%%%%%%%%%%%%%%%%%%%%%%%%%%%%%%%%%%%%%%%%%%%%%%%%%%%%%%%%%%%%%%%%%%%%%%%%%%%%%%%%%%%%%%%%%%%%%%%%%%%%%%%%%%%%%%%%%%%%%%%%%%%%%%%%%%%%%%%%%%%%%%%%%%%%%%%%%%%%%%%%%%%%%%%%%%%%%%%%%%%%
\section{Numerical examples}\label{Numerical examples}
In this section, we provide numerical examples to demonstrate the algorithms.
The inverse scattering problem concerns solving the nonlinear integral equation for $n=1,...,N$ 
\begin{equation}
\mathcal{F}_{\theta_n}q=u^{\infty}(\cdot, \theta_n), \label{Nonlinear integral equation}
\end{equation}
where the operator $\mathcal{F}_{\theta_n}:L^{2}(Q) \to L^{\infty}(\mathbb{S}^{1})$ is defined by 
\begin{equation}
\mathcal{F}_{\theta_n}q(\hat{x})
=
\frac{k^2\mathrm{e}^{\frac{i\pi}{4}} }{\sqrt{8\pi k}}
\int_{Q}\mathrm{e}^{-ik\hat{x} \cdot y}u_{q}(y, \theta_{n}) q(y)dy, 
\end{equation}
where the incident direction is denoted by $\theta_n:=\left(\mathrm{cos}(2\pi n/N), \mathrm{sin}(2\pi n/N) \right)$. 
Here, $u_{q}(\cdot, \theta_{n})$ is the solution of the Lippmann-Schwinger integral equation (\ref{1.4}), which is numerically calculated based on Vainikko's method \cite{saranen2001periodic, vainikko2000fast}.
This is a fast solution method for the Lippmann–Schwinger equation based on periodization, fast Fourier
transform techniques, and multi-grid methods.
We assume that the support of function $q$ is included in $[-S, S]^2$ with some $S>0$, and function $q$ is discretized by a piecewise constant on $[-S, S]^{2}$ decomposed by squares with length $\frac{S}{M}$, that is,
\begin{equation}
q \approx \left( q(y_{m_{1},m_{2}})  \right)_{-M \leq m_{1},m_{2} \leq M-1} \in \mathbb{C}^{(2M)^2},
\end{equation} 
where $y_{m_{1},m_{2}}:=\left( \frac{(2m_{1}+1)S}{2M}, \frac{(2m_{2}+1)S}{2M} \right)$, and $M \in \mathbb{N}$ is a number of the division of $[0,S]$.
Furthermore, the function $u^{\infty}(\cdot, \theta_n)$ is discretized by
\begin{equation}
u_{n}^{\infty}(\cdot, \theta_n) \approx \left( u^{\infty}(\hat{x}_j, \theta_{n}) \right)_{j=1,...,J}  \in \mathbb{C}^{J}, 
\end{equation}
where $\hat{x}_j:=\left(\mathrm{cos}(2\pi j/J),  \mathrm{sin}(2\pi j/J) \right)$, and $J \in \mathbb{N}$ is a number of the division of $[0,2\pi]$.
\par
We always fix the following parameters as $J=60$, $M=6$, $S=3$, $N=60$, and $k=7$. 
We consider true functions as the characteristic function 
\begin{equation}
q^{true}_{j}(x):=\left\{ \begin{array}{ll}
1.0 & \quad \mbox{for $x \in B_j$}  \\
0 & \quad \mbox{for $x \notin B_j$}
\end{array} \right.,  \label{6.6}
\end{equation} 
where the support $B_j$ of the true function is considered in the following two types:
\begin{equation} 
B_1:=\left\{(x_1, x_2) : x^{2}_{1}+x^{2}_{2} < 1.0   \right\},  \label{6.7}
\end{equation}
\begin{equation}
B_2:=\left\{(x_1, x_2):\begin{array}{cc}
(x_{1}-a)^2+(x_{2}-b)^{2} < (0.5)^{2}, \ \ a, b \in \{ -1.5, 0, 1.5 \}
    \end{array}
\right\}.  \label{6.8}
\end{equation}
In Figure \ref{true}, the closed blue curve is the boundary $\partial B_j$ of the support $B_j$, and the green brightness indicates the value of the true function on each cell divided into $(2M)^2$ in the sampling domain $[-S, S]^2$.
Here, we always employ the initial guess $q_0$ as
\begin{equation}
q_0\equiv0.  \label{6.9}
\end{equation}
%%%%%%%%%%%%%%%%%%%%%%%%%%%%%%%%%%%%%%%%%%%%%%%%%%%%%%%%%%%%%%%%%%%%%%%%%%%%%%%%%%%%%%%%%%%%%%%%%%%%%%%%%%%%%%%%%%%%%%%%%%%%%%%%%%%%%%%%%%%%%%%%%%%%%%%%%%%%%%%%%%%%%%%
\par
We demonstrate four algorithms: the Extended Kalman filter (\ref{EKF-1})--(\ref{EKF-B}) with initialization (\ref{EKF-B}) (EKF-initialization), Extended Kalman filter (\ref{EKF-1})--(\ref{EKF-B}) with update (\ref{update weight B}) (EKF-update),  Kalman filter Levenberg–Marquardt (\ref{KFL-1})--(\ref{KFL-3}) with initialization (\ref{KFL-B}) (KFL-initialization), and Kalman filter Levenberg–Marquardt (\ref{KFL-1})--(\ref{KFL-3}) with update (\ref{update weight B}) (KFL-initialization).
Figures \ref{B1}, \ref{B1noise}, \ref{B2}, and \ref{B2noise} show the reconstruction of $B_1$ and $B_2$ by the four algorithms. 
Figures \ref{B1}, \ref{B2} and \ref{B1noise}, \ref{B2noise} correspond to the noise-free and noise cases, respectively.
For the noise case, we add a random sampling from a normal distribution with a mean of zero and standard deviation $\sigma>0$ to our measurement $u^{\infty}(\cdot, \theta_{n})$, that is,
\begin{equation}
u^{\infty}(\cdot, \theta_{n})+\epsilon_{n}, \ \ \ \epsilon_{n} \sim \mathcal{N}(0, \sigma^{2}I).
\end{equation}
For the noise-free case in Figures \ref{B1} and \ref{B2}, we choose the regularization parameter $\alpha=100$ and $\alpha=3000$, respectively.
For the noise case in Figures \ref{B1noise} and \ref{B2noise}, we choose the standard deviation $\sigma=0.5$ and regularization parameters with $\alpha=2000$, and $\alpha=10000$, respectively.
The first and second rows correspond to a visualization of the four algorithms, and
the third row is the graph of the Mean Square Error (MSE) defined by
\begin{equation}
e_{i}:=\left\|q^{true}-q_{i} \right\|^2,  \label{6.11}
\end{equation}
where $q_{i}$ is associated with the state of the $i$-th iteration step. 
The horizontal and vertical axes correspond to the number of iterations and the MSE value, respectively.
Figures \ref{Graph sigma} and \ref{Graph alpha} show the MSE graph in the noise case for different standard deviations $\sigma$ and regularization parameters $\alpha$, respectively.
\par
We observe that the error of EKF (or initialization) decreases faster than that of KFL (or update) when we appropriately choose the regularization parameter; however, EKF (or initialization) is more sensitive to the choice of regularization parameter $\alpha$ and standard deviation $\sigma$ than KFL (or update), that is, EKF-initialization converges fastest and KFL-update is robustest to parameters.
%%%%%%%%%%%%%%%%%%%%%%%%%%%%%%%%%%%%%%%%%%%%%%%%%%%%%%%%%%%%%%%%%%%%%%%%%%%%%%%%%%%%%%%%%%%%%%%%%%%%%%%%%%%%%%%%%%%%%%%%%%%%%%%%%%%%%%%%%%%%%%%%%%%%%%%%%%%%%%%%%%%%%%%%%%%%%%%%%%%%%%%%%%%%%%%%%%%%%%%%%%%%%%%%%%%%%%%%%%%%%%%%%%%%%%%%%%%%%%%%%%%%%%%%%%%%%%%%%%%%%%%%%%%%%%%%%%%%%%%%%%%%%%%%
\section*{Conclusions and future works}
In this paper, we proposed two reconstruction algorithms called the {\it Kalman Filter Levenberg-Marquardt} (KFL) and {\it iterative Extended Kalman Filter} (EKF) that are categorized as iterative optimization methods for the inverse medium scattering problem.
We numerically observed that the error of EKF decreases faster when the regularization parameter is appropriately chosen, whereas, EKF is more sensitive to the  regularization parameter and the noise than the KFL.
In addition, we observed that, by selecting the update of weight of the norm for each iteration step, both algorithms become robust to the parameters. 
KFL and EKF algorithms can be applied to various nonlinear inverse problems, and other applications will be developed in the future.
%%%%%%%%%%%%%%%%%%%%%%%%%%%%%%%%%%%%%%%%%%%%%%%%%%%%%%%%%%%%%%%%%%%%%%%%%%%%%%%%%%%%%%%%%%%%%%%%%%%%%%%%%%%%%%%%%%%%%%%%%%%%%%%%%%%%%%%%%%%%%%%%%%%%%%%%%%%%%%%%%%%%%%%%%%%%%%%%%%%%%%%%%%%%%%%%%%%%%%%%%%%%%%%%%%%%%%%%%%%%%%%%%%%%%%%%%%%%%%%%%%%%%%%%%%%%%%%%%%%%%%%%%%%%%%%%%%%%%%%%%%%%%%%%
\section*{Acknowledgments}
The work of the first author was supported by Grant-in-Aid for JSPS Fellows (No.21J00119), Japan Society for the Promotion of Science.

\bibliographystyle{plain}
\bibliography{Inverse medium scattering problems with Kalman filter techniques.bbl}
%\bibliography{KF.bib}

%%%%%%%%%%%%%%%%%%%%%%%%%%%%%%%%%%%%%%%%%%%%%%%%%%%%%%%%%%%%%%%%%%%%%%%%
%% true function %%%%%%%%%%%%%%%%%%%%%%%%%%%%%%%%%%%%%%%%%%%%%%%%%%%%%%%%%%%%%%%%%%%%%%%%%%%%%%%%%%%%%%%%%%%%%%%%%%%%%%%%%%
\begin{figure}[h]
\begin{minipage}[b]{0.5\linewidth}
\centering
\includegraphics[keepaspectratio, scale=0.5]{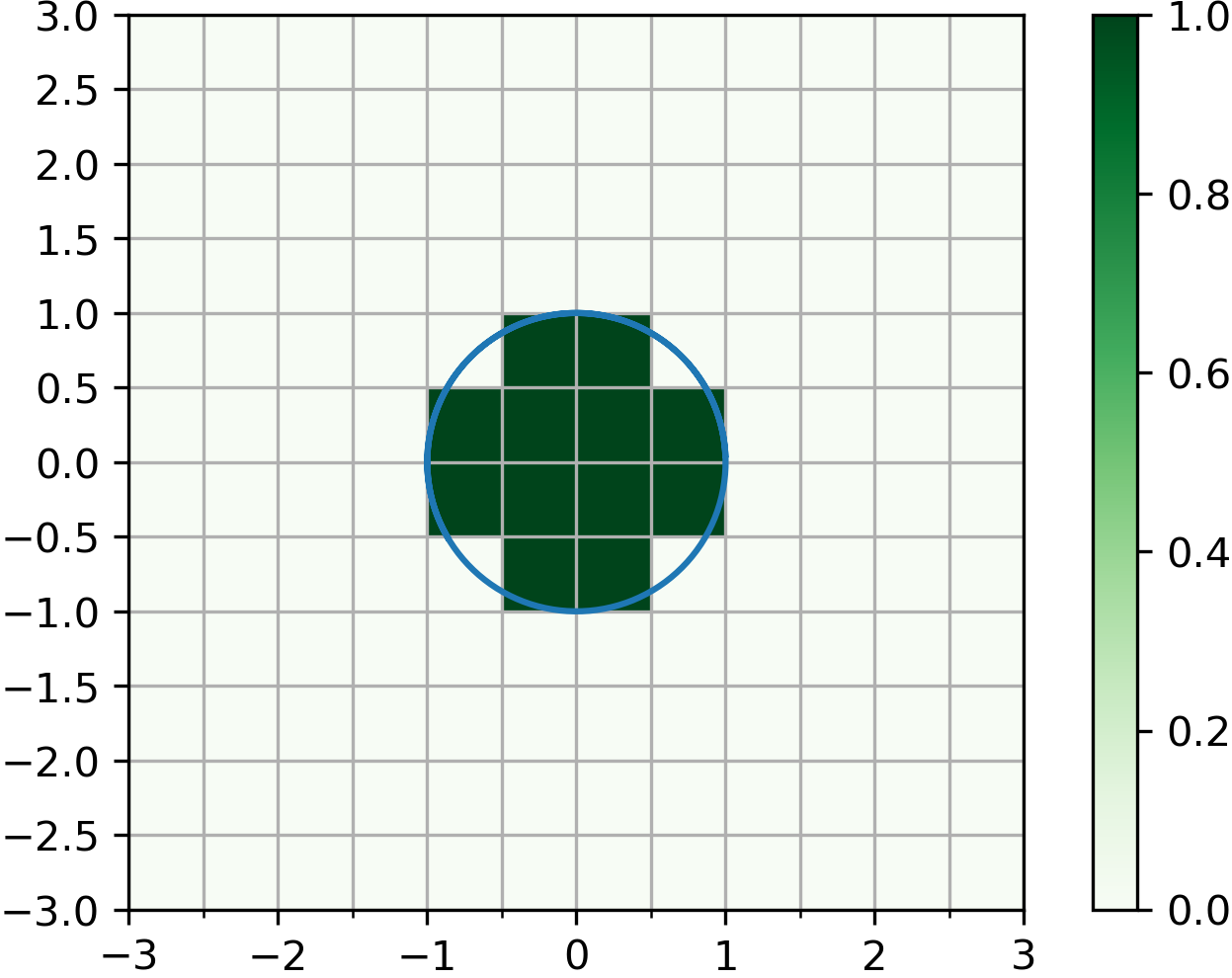}
\subcaption{$q^{true}_{1}$}
\end{minipage}
\begin{minipage}[b]{0.5\linewidth}
\centering
\includegraphics[keepaspectratio, scale=0.5]{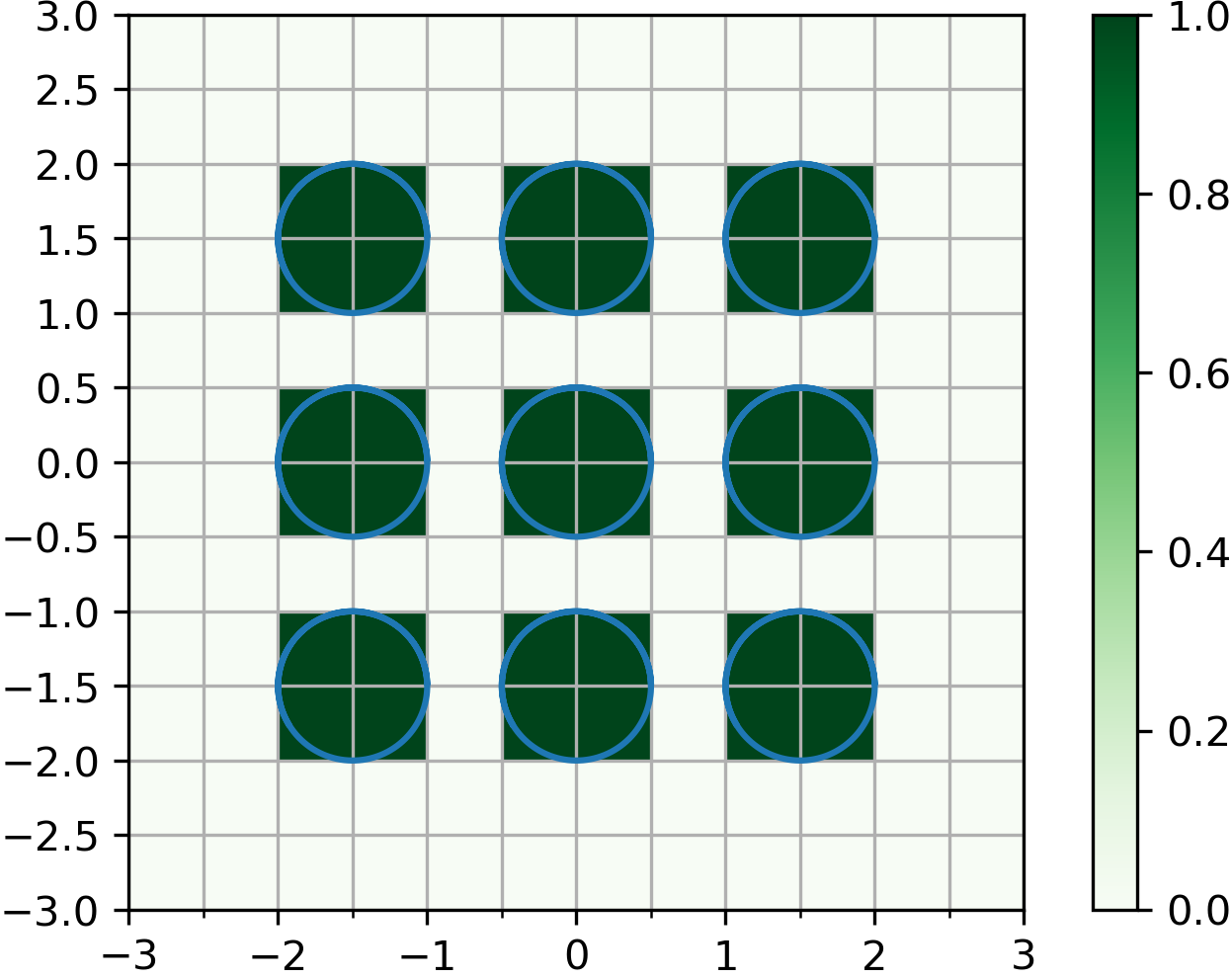}
\subcaption{$q^{true}_{2}$}
\end{minipage}
\caption{True function}
\label{true}
\end{figure}
%%%%%%%%%%%%%%%%%%%%%%%%%%%%%%%%%%%%%%%%%%%%%%%%%%%%%%%%%%%%%%%%%%%%%%%%%%%%%%%%%%%%%%%%%%%%%%%%%%%%%%%%%%%%%%%%%%%%%%%%%%%%%%%%%%%%%%%%%%%%%%%%%%%%%%%%%%%%%%%%%%%%%%%%%%%%%%%%%%%%%%%%%%%%%%%%%%%%%%%%%%%%

%%%%%%%%%%%%%%%%%%%%%%%%%%%%%%%%%%%%%%%%%%%%%%%%%%%%%%%%%%%%%%%%%%%%%%%%%%%%%%%%%%%%%%%%%%%%%%%%%%%%%%%%%%%%%%%%%%%%%%%%%%%%%%%%%%%%%%%%%%%%%%%%%%%%%%%%%%%%%%%%%%%%%%%%%%%%%%%%%%%%%%%%%%%%%%%%%%%%%%%%%%%%%%%%%%%%%%%%%%%%%%%%%%%%%%%%%%%%%%%%%%%%%%%%%%%%%%%
\begin{figure}[h]
\begin{tabular}{c}
\begin{minipage}[b]{0.5\linewidth}
\centering
\includegraphics[keepaspectratio, scale=0.5]{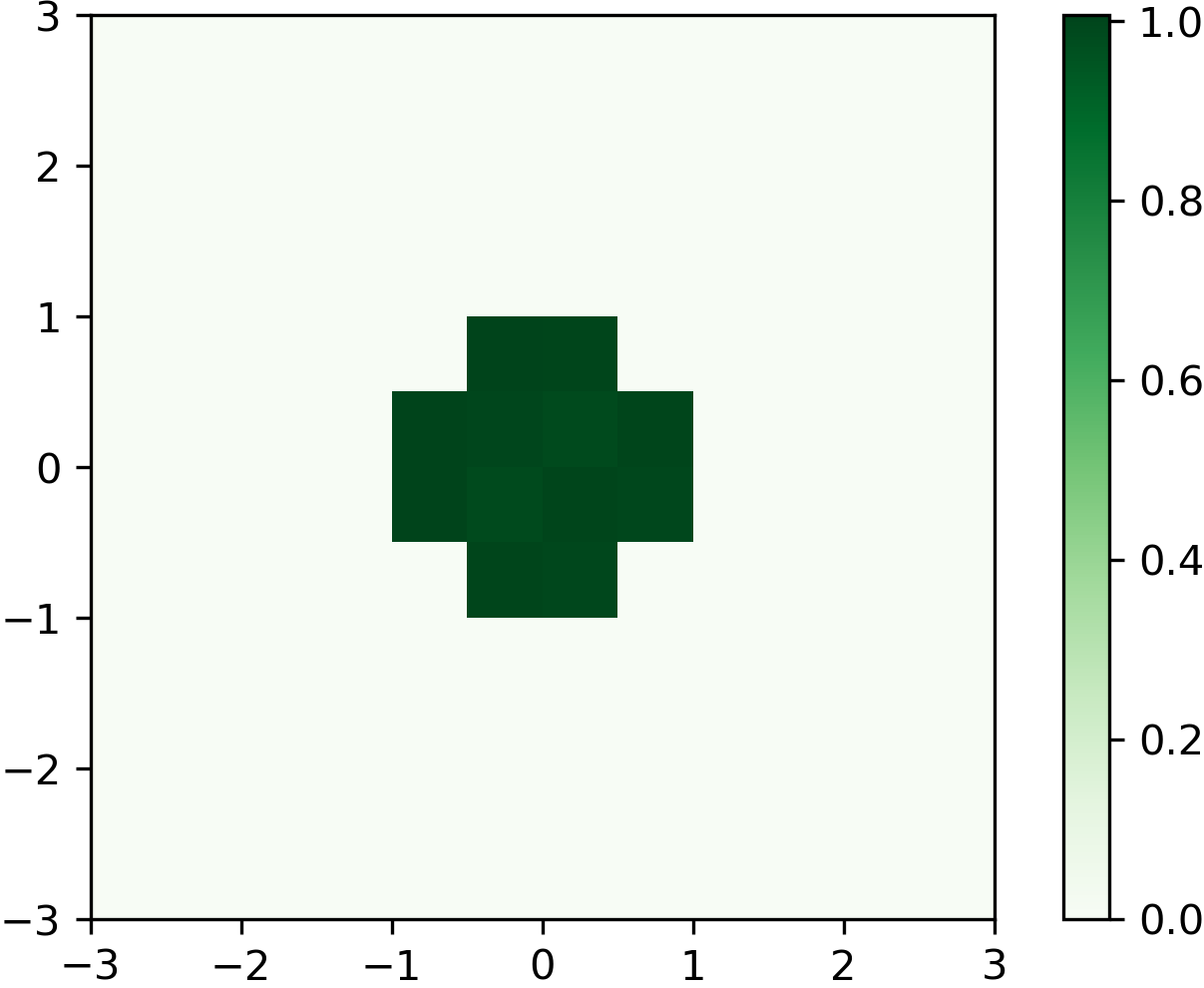}
\subcaption{EKF-initialization}
\end{minipage}
\begin{minipage}[b]{0.5\linewidth}
\centering
\includegraphics[keepaspectratio, scale=0.5]{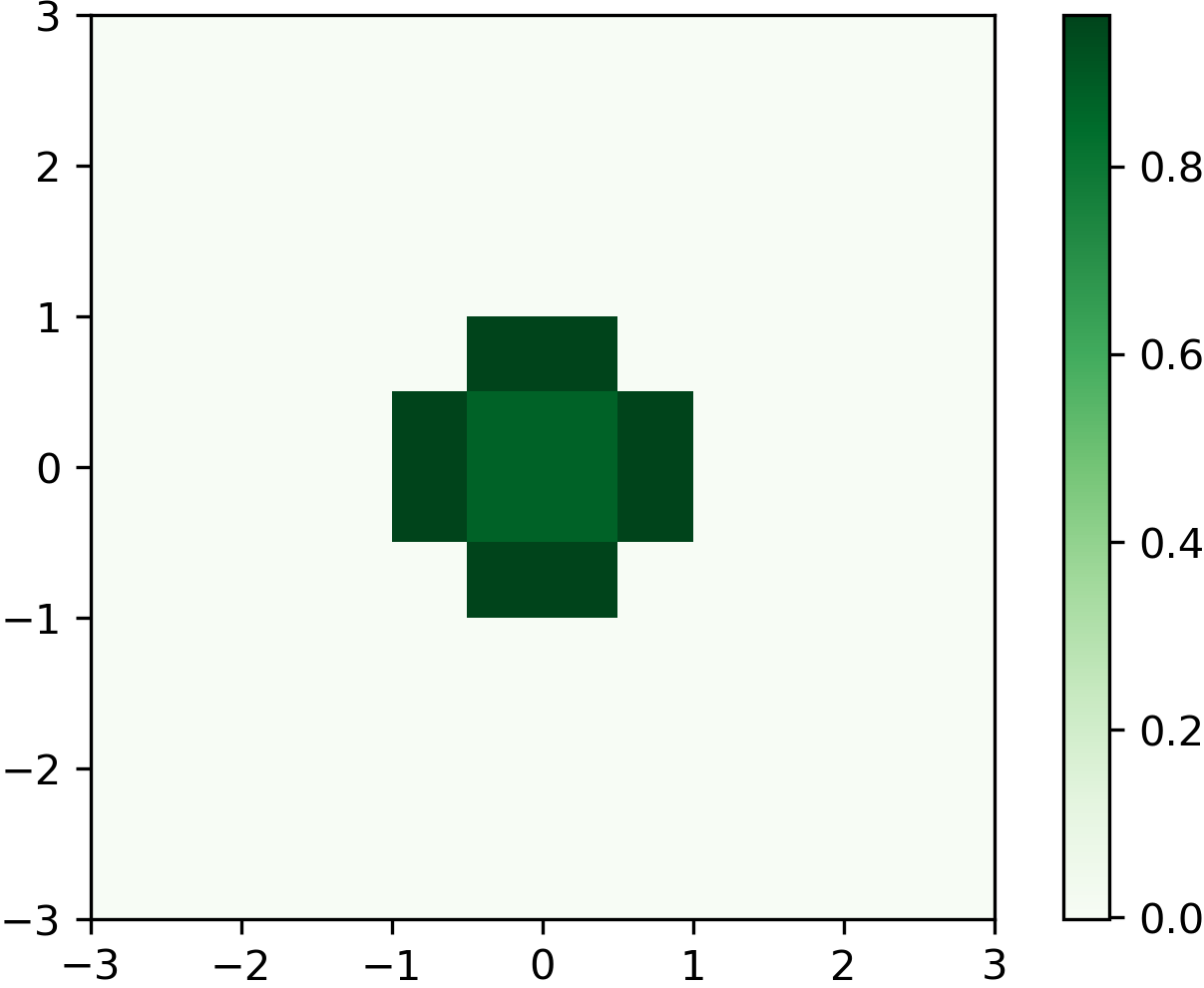}
\subcaption{KFL-initialization}
\end{minipage}
\end{tabular}
%%%%%%%%%%%%%%%%%%%%%%%%%%%%%%%%%%%%%%%%%%%%%%%%%%%%%%%%%%%%%%%%%%%%%%%%%%%%%%%%%%%%%%%%%%%%%%%%%%%%%%%%%%%%%%%%%%%%%%%%%%%%%%%%%%%%%%%%%%%%%%
\begin{tabular}{c}
\begin{minipage}[b]{0.5\linewidth}
\centering
\includegraphics[keepaspectratio, scale=0.5]{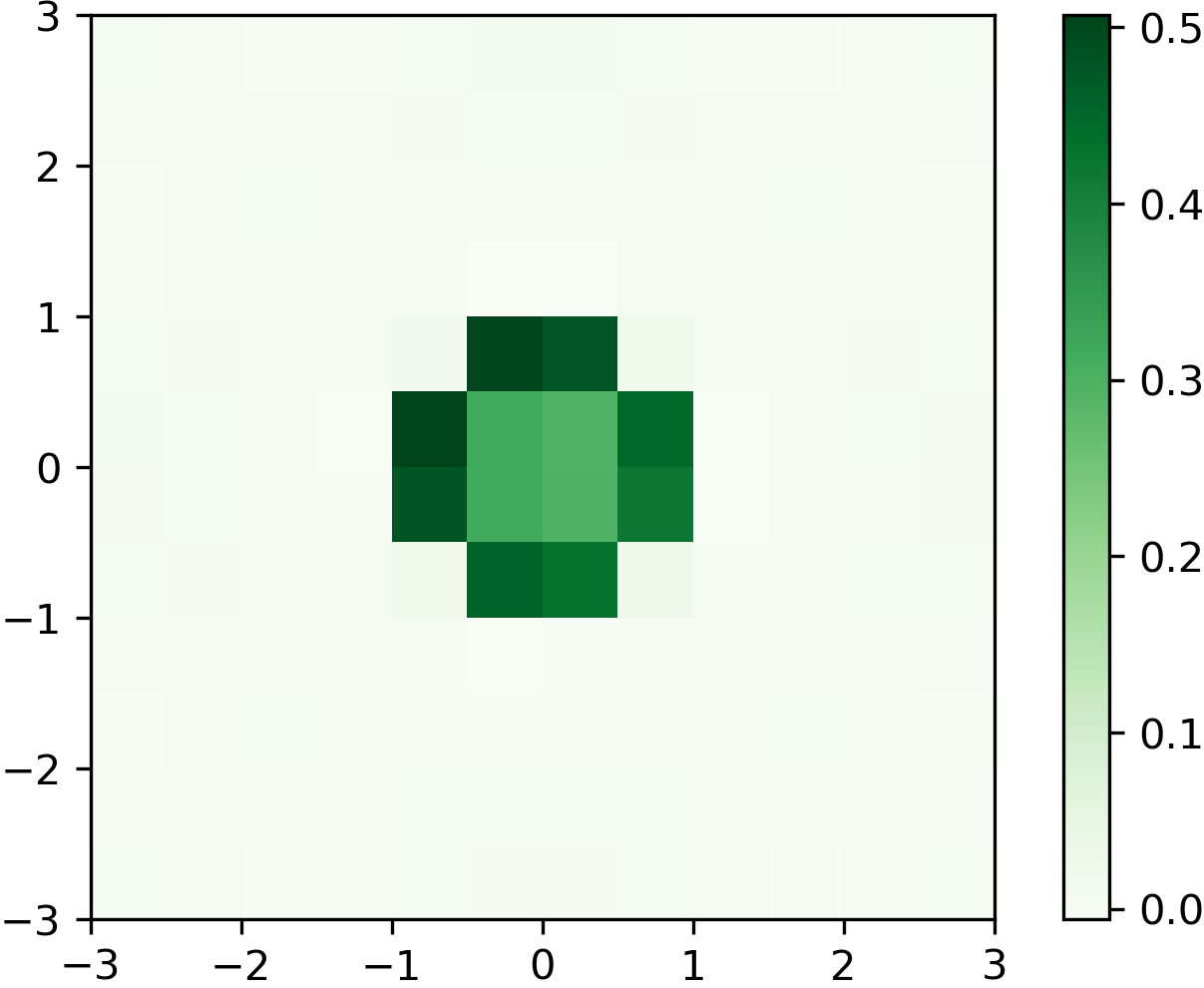}
\subcaption{EKF-update}
\end{minipage}
\begin{minipage}[b]{0.5\linewidth}
\centering
\includegraphics[keepaspectratio, scale=0.5]{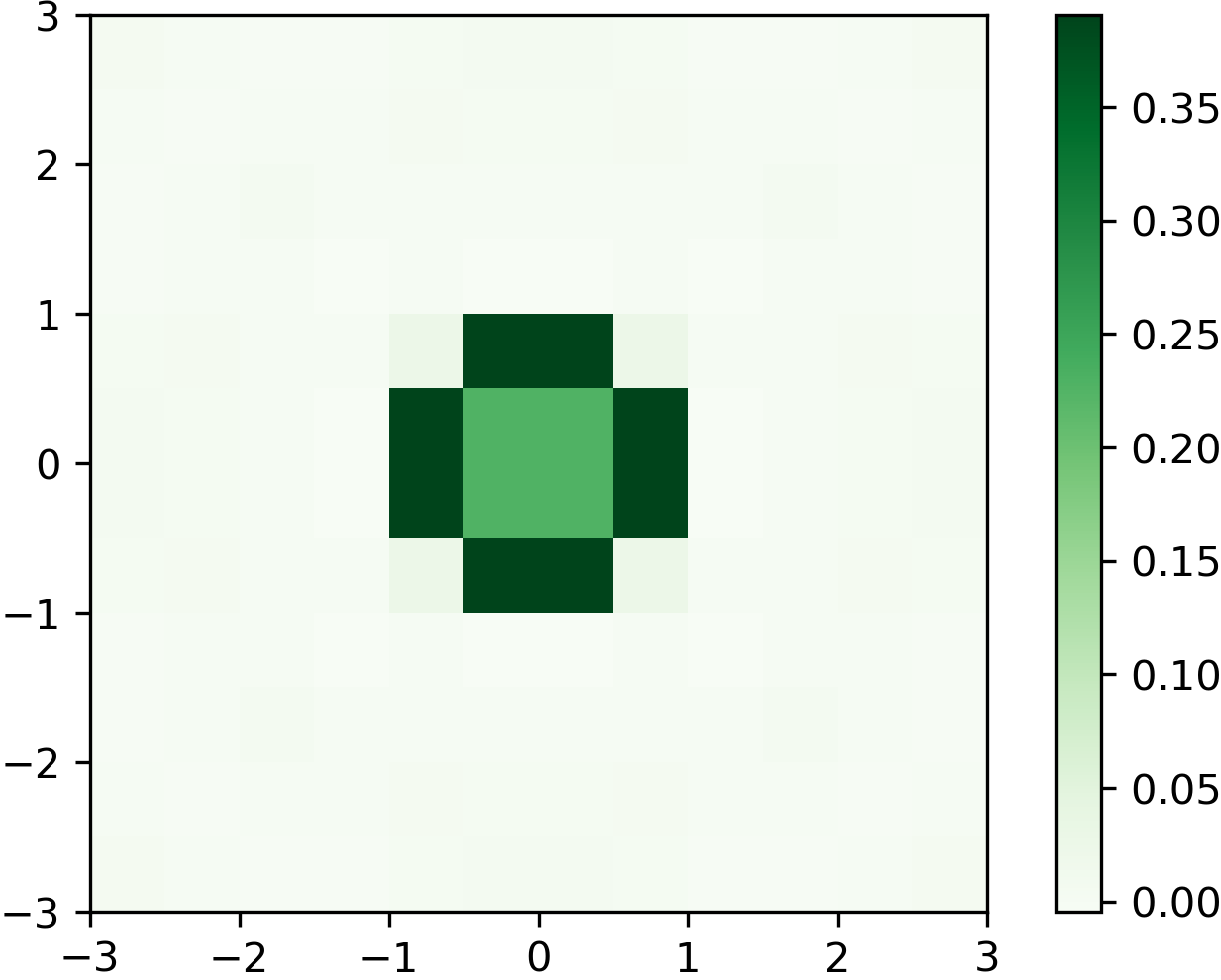}
\subcaption{KFL-update}
\end{minipage}
\vspace{1cm}
\end{tabular}
%%%%%%%%%%%%%%%%%%%%%%%%%%%%%%%%%%%%%%%%%%%%%%%%%%%%%%%%%%%%%%%%%%%%%%%%%%%%%%%%%%%%%%%%%%%%%%%%%%%%%%%%%%%%%%%%%%%%%%%%%%%%%%%%%%%%%%%%%%%%%%
\begin{tabular}{c}
\hspace{2cm}
\begin{minipage}[b]{0.6\linewidth}
\centering
\includegraphics[keepaspectratio, scale=0.67]{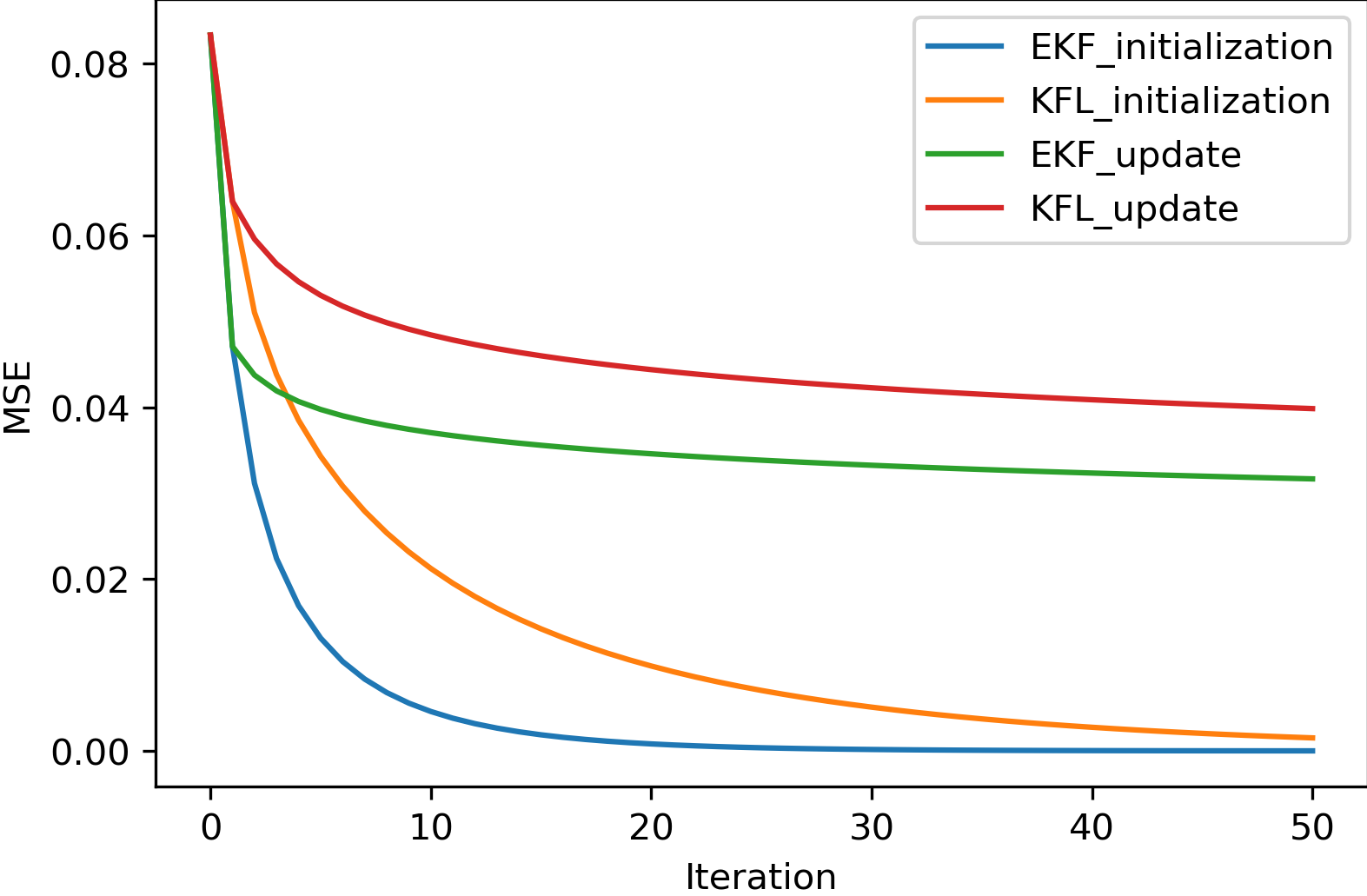}
%\subcaption{$B_1$, $k=3$, $\rho=0.4$}
\end{minipage}
\end{tabular}
%%%%%%%%%%%%%%%%%%%%%%%%%%%%%%%%%%%%%%%%%%%%%%%%%%%%%%%%%%%%%%%%%%%%%%%%%%%%%%%%%%%%%%%%%%%%%%%%%%%%%%%%%%%%%%%%%%%%%%%%%%%%%%%%%%%%%%%%%%%%%%%%%%%%%%%%%%%%%%%%%%%%%%%%%%%%%%%%%%%%%%%%%%%%%
\caption{Reconstruction for the noise-free case: $q_{1}^{true}$, $\alpha=100$}
\label{B1}
\end{figure}
%%%%%%%%%%%%%%%%%%%%%%%%%%%%%%%%%%%%%%%%%%%%%%%%%%%%%%%%%%%%%%%%%%%%%%%%%%%%%%%%%%%%%%%%%%%%%%%%%%%%%%%%%%%%%%%%%%%%%%%%%%%%%%%%%%%%%%%%

%%%%%%%%%%%%%%%%%%%%%%%%%%%%%%%%%%%%%%%%%%%%%%%%%%%%%%%%%%%%%%%%%%%%%%%%%%%%%%%%%%%%%%%%%%%%%%%%%%%%%%%%%%%%%%%%%%%%%%%%%%%%%%%%%%%%%%%%%%%%%%%%%%%%%%%%%%%%%%%%%%%%%%%%%%%%%%%%%%%%%%%%%%%%%%%%%%%%%%%%%%%%%%%%%%%%%%%%%%%%%%%%%%%%%%%%%%%%%%%%%%%%%%%%%%%%%%%
\begin{figure}[h]
\begin{tabular}{c}
\begin{minipage}[b]{0.5\linewidth}
\centering
\includegraphics[keepaspectratio, scale=0.5]{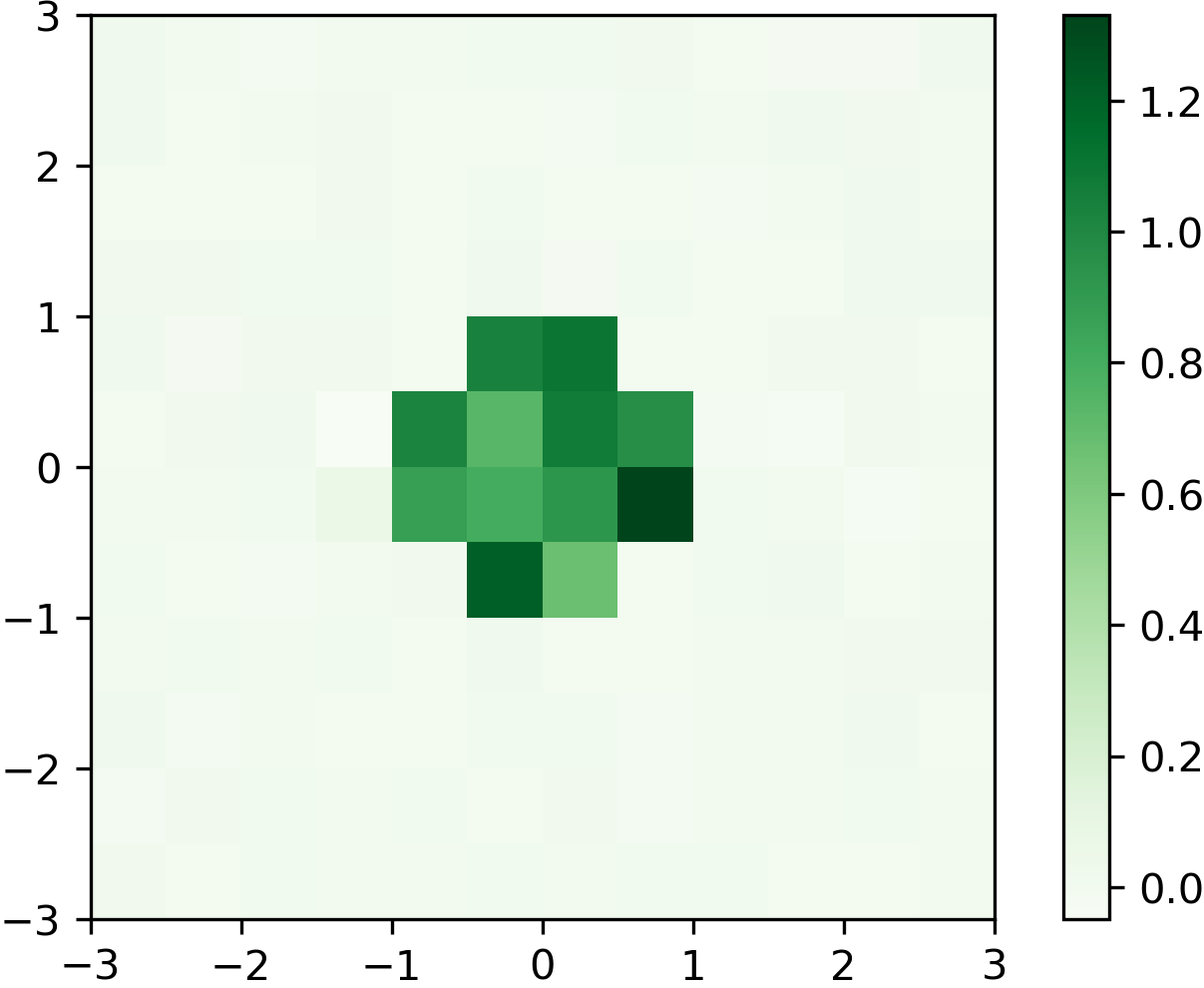}
\subcaption{EKF-initialization}
\end{minipage}
\begin{minipage}[b]{0.5\linewidth}
\centering
\includegraphics[keepaspectratio, scale=0.5]{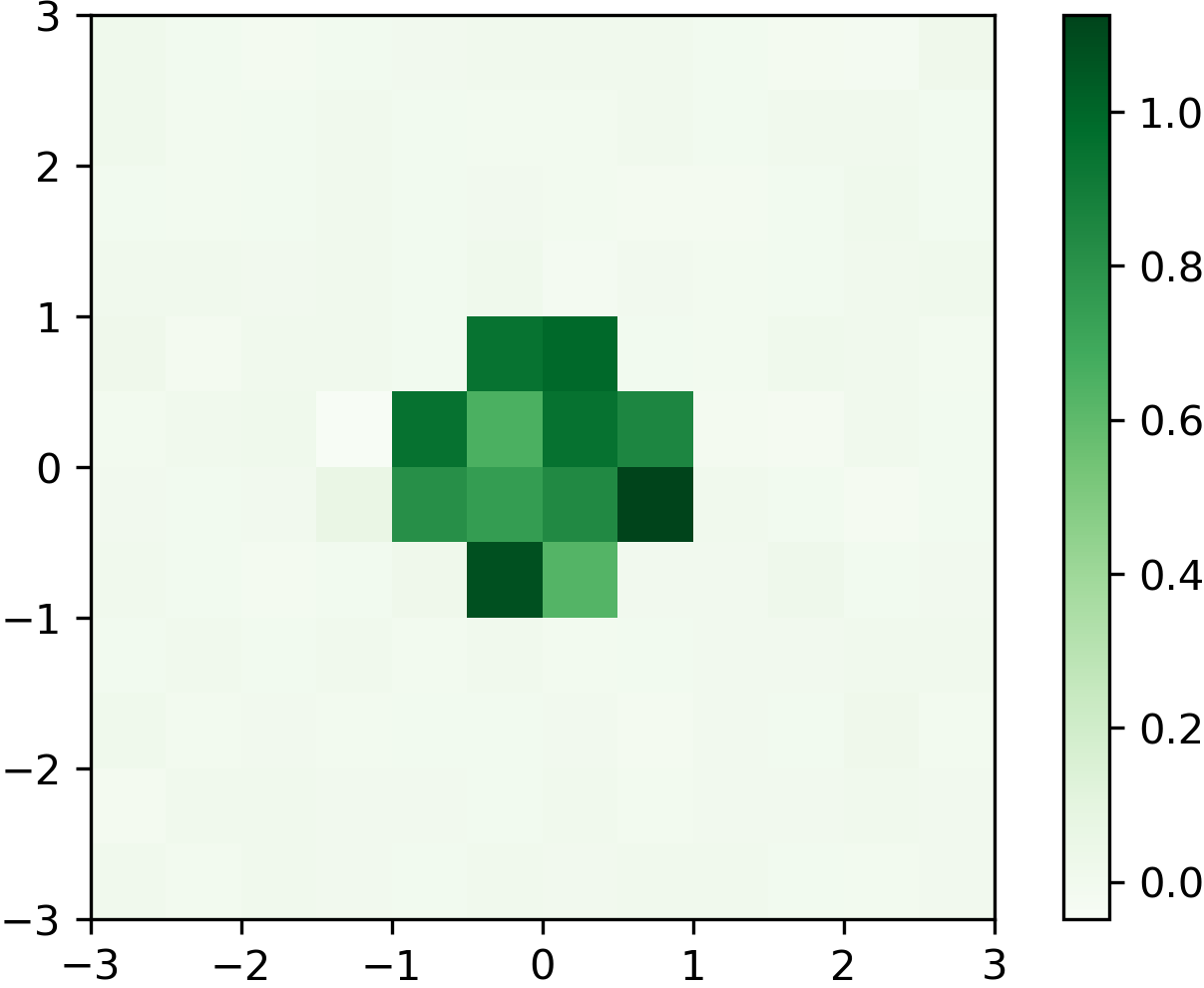}
\subcaption{KFL-initialization}
\end{minipage}
\end{tabular}
%%%%%%%%%%%%%%%%%%%%%%%%%%%%%%%%%%%%%%%%%%%%%%%%%%%%%%%%%%%%%%%%%%%%%%%%%%%%%%%%%%%%%%%%%%%%%%%%%%%%%%%%%%%%%%%%%%%%%%%%%%%%%%%%%%%%%%%%%%%%%%
\begin{tabular}{c}
\begin{minipage}[b]{0.5\linewidth}
\centering
\includegraphics[keepaspectratio, scale=0.5]{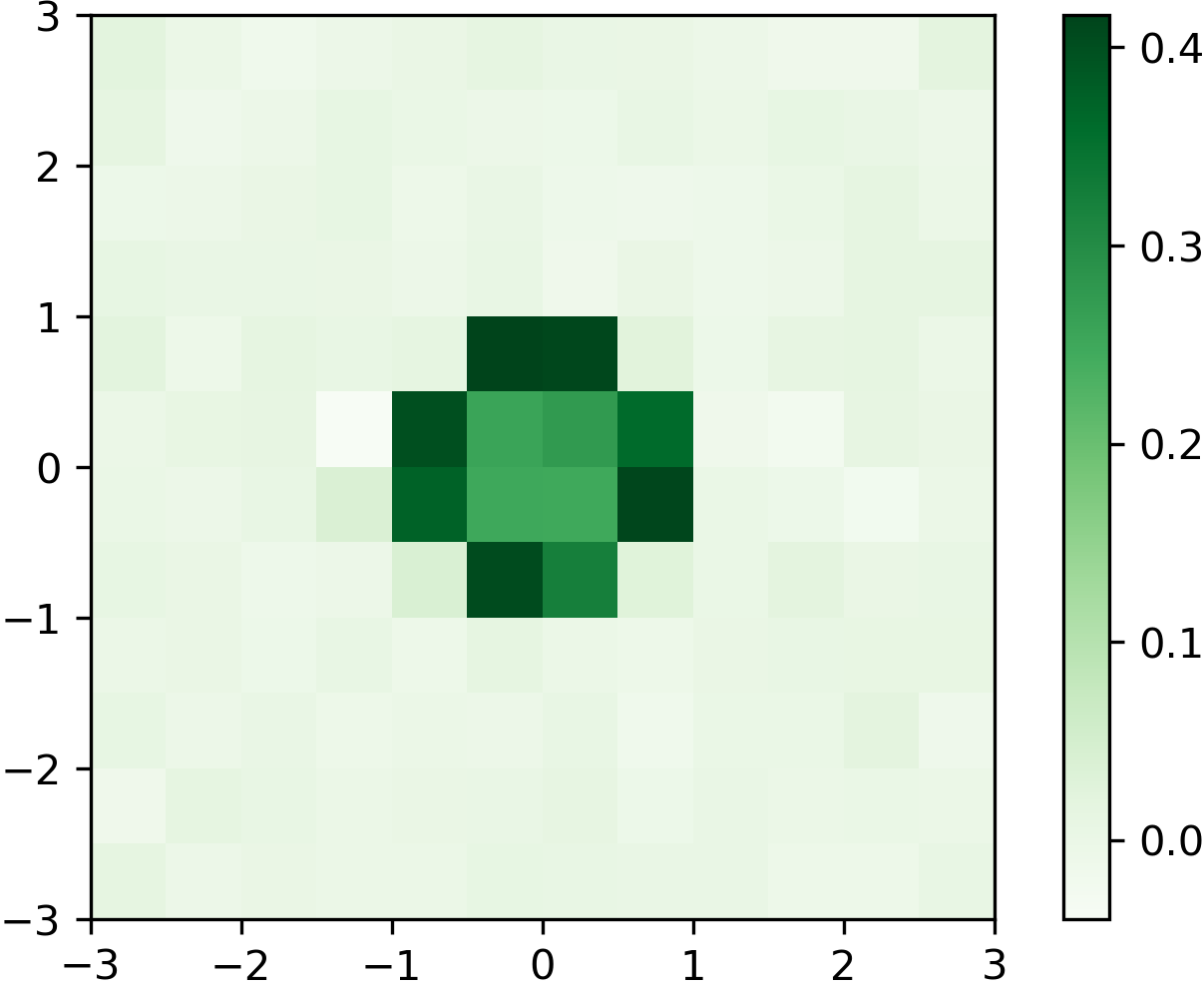}
\subcaption{EKF-update}
\end{minipage}
\begin{minipage}[b]{0.5\linewidth}
\centering
\includegraphics[keepaspectratio, scale=0.5]{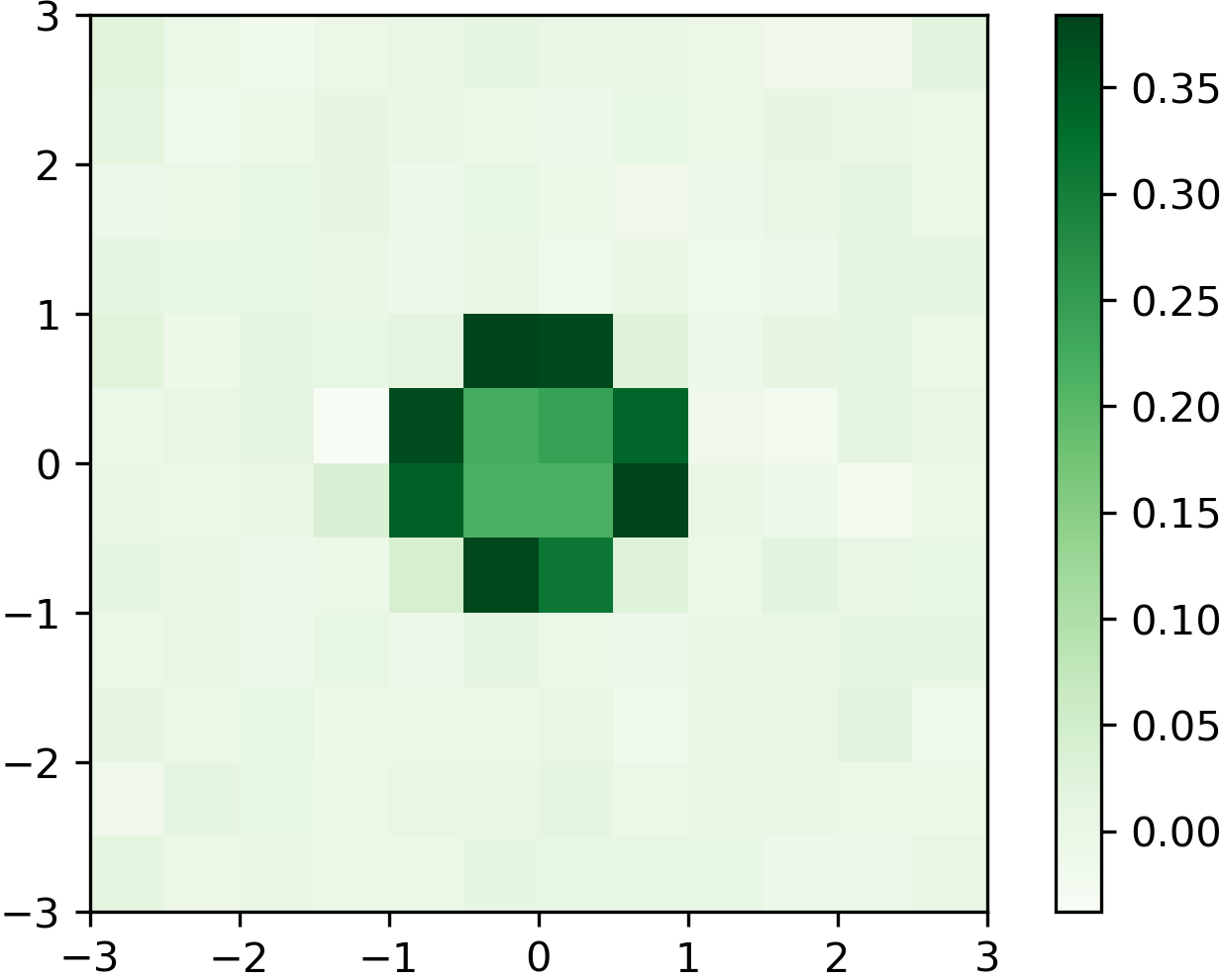}
\subcaption{KFL-update}
\end{minipage}
\vspace{1cm}
\end{tabular}
%%%%%%%%%%%%%%%%%%%%%%%%%%%%%%%%%%%%%%%%%%%%%%%%%%%%%%%%%%%%%%%%%%%%%%%%%%%%%%%%%%%%%%%%%%%%%%%%%%%%%%%%%%%%%%%%%%%%%%%%%%%%%%%%%%%%%%%%%%%%%%
\begin{tabular}{c}
\hspace{2cm}
\begin{minipage}[b]{0.6\linewidth}
\centering
\includegraphics[keepaspectratio, scale=0.67]{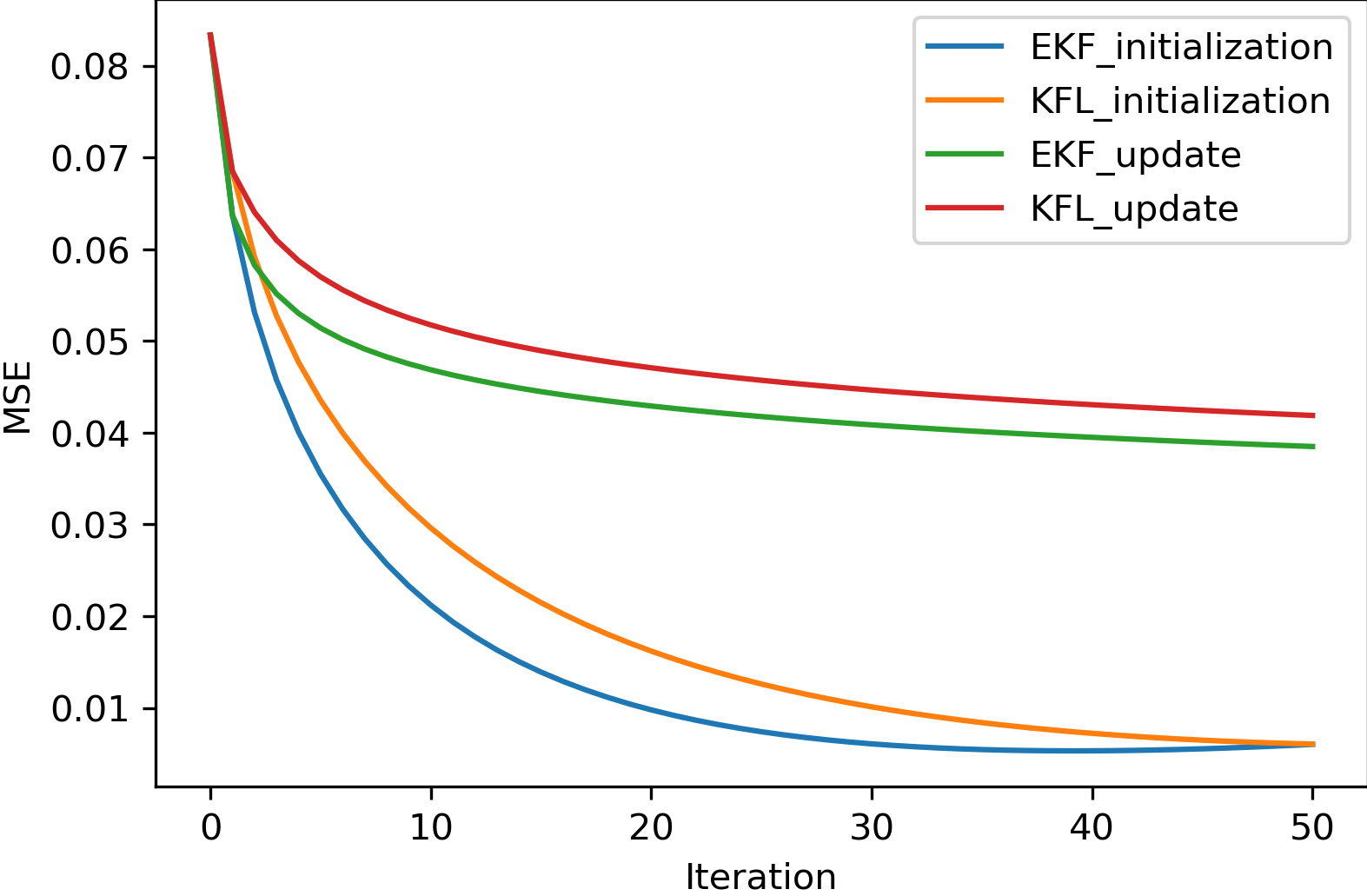}
%\subcaption{$B_1$, $k=3$, $\rho=0.4$}
\end{minipage}
\end{tabular}
%%%%%%%%%%%%%%%%%%%%%%%%%%%%%%%%%%%%%%%%%%%%%%%%%%%%%%%%%%%%%%%%%%%%%%%%%%%%%%%%%%%%%%%%%%%%%%%%%%%%%%%%%%%%%%%%%%%%%%%%%%%%%%%%%%%%%%%%%%%%%%%%%%%%%%%%%%%%%%%%%%%%%%%%%%%%%%%%%%%%%%%%%%%%%
\caption{Reconstruction for the noise case: $q_{1}^{true}$, $\alpha=2000$, $\sigma=0.5$}
\label{B1noise}
\end{figure}
%%%%%%%%%%%%%%%%%%%%%%%%%%%%%%%%%%%%%%%%%%%%%%%%%%%%%%%%%%%%%%%%%%%%%%%%%%%%%%%%%%%%%%%%%%%%%%%%%%%%%%%%%%%%%%%%%%%%%%%%%%%%%%%%%%%%%%%%

%%%%%%%%%%%%%%%%%%%%%%%%%%%%%%%%%%%%%%%%%%%%%%%%%%%%%%%%%%%%%%%%%%%%%%%%%%%%%%%%%%%%%%%%%%%%%%%%%%%%%%%%%%%%%%%%%%%%%%%%%%%%%%%%%%%%%%%%%%%%%%%%%%%%%%%%%%%%%%%%%%%%%%%%%%%%%%%%%%%%%%%%%%%%%%%%%%%%%%%%%%%%%%%%%%%%%%%%%%%%%%%%%%%%%%%%%%%%%%%%%%%%%%%%%%%%%%%
\begin{figure}[h]
\begin{tabular}{c}
\begin{minipage}[b]{0.5\linewidth}
\centering
\includegraphics[keepaspectratio, scale=0.5]{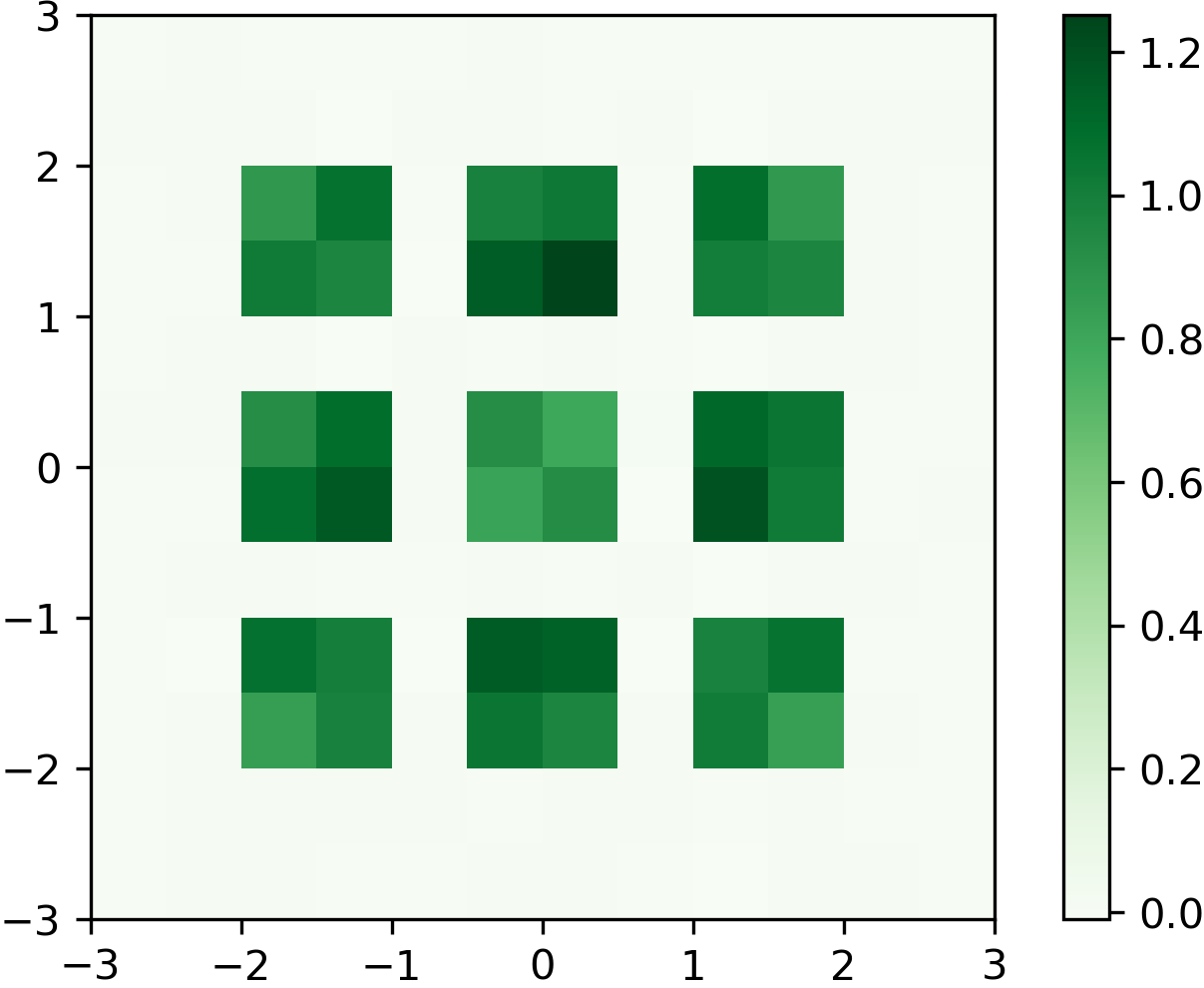}
\subcaption{EKF-initialization}
\end{minipage}
\begin{minipage}[b]{0.5\linewidth}
\centering
\includegraphics[keepaspectratio, scale=0.5]{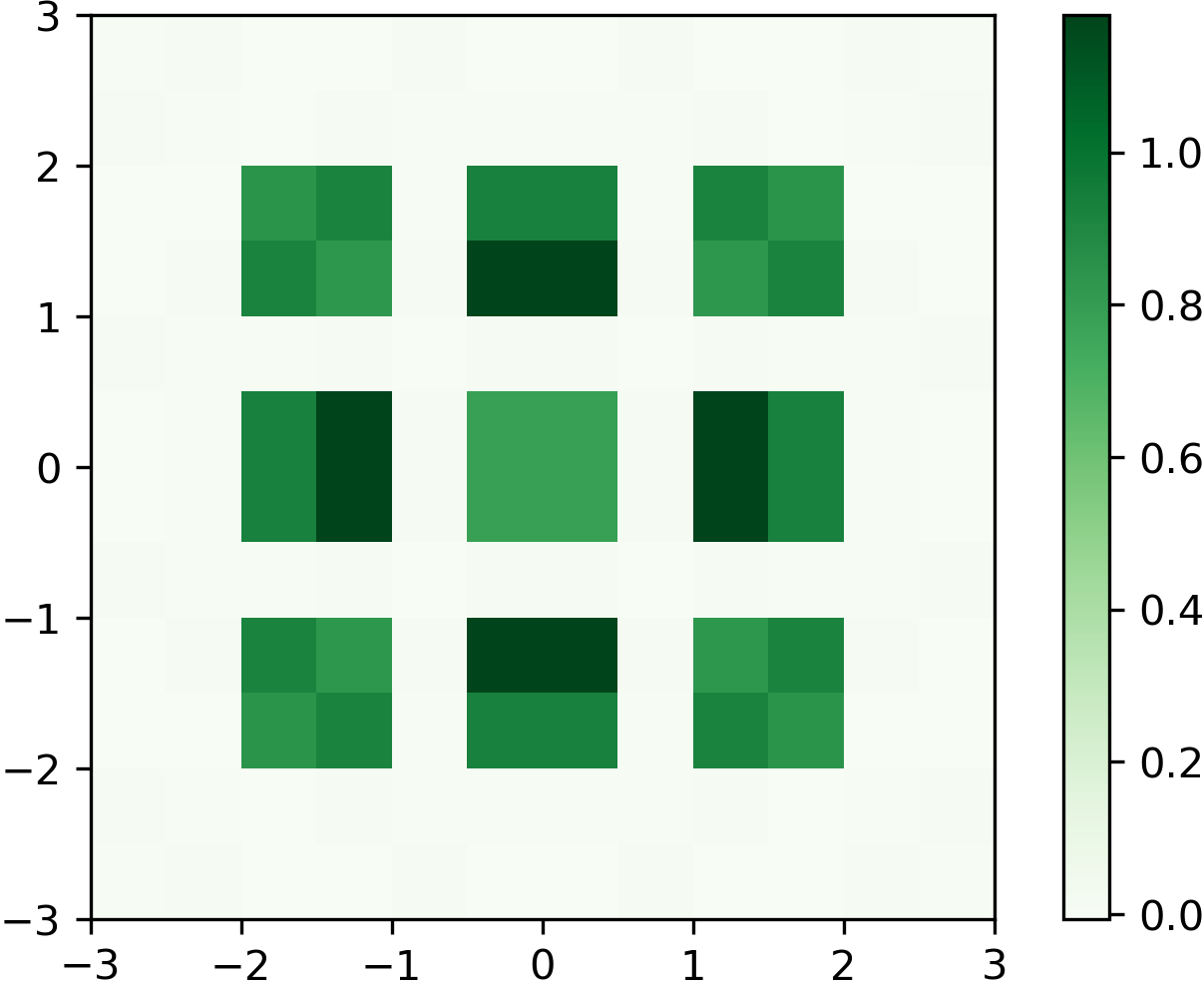}
\subcaption{KFL-initialization}
\end{minipage}
\end{tabular}
%%%%%%%%%%%%%%%%%%%%%%%%%%%%%%%%%%%%%%%%%%%%%%%%%%%%%%%%%%%%%%%%%%%%%%%%%%%%%%%%%%%%%%%%%%%%%%%%%%%%%%%%%%%%%%%%%%%%%%%%%%%%%%%%%%%%%%%%%%%%%%
\begin{tabular}{c}
\begin{minipage}[b]{0.5\linewidth}
\centering
\includegraphics[keepaspectratio, scale=0.5]{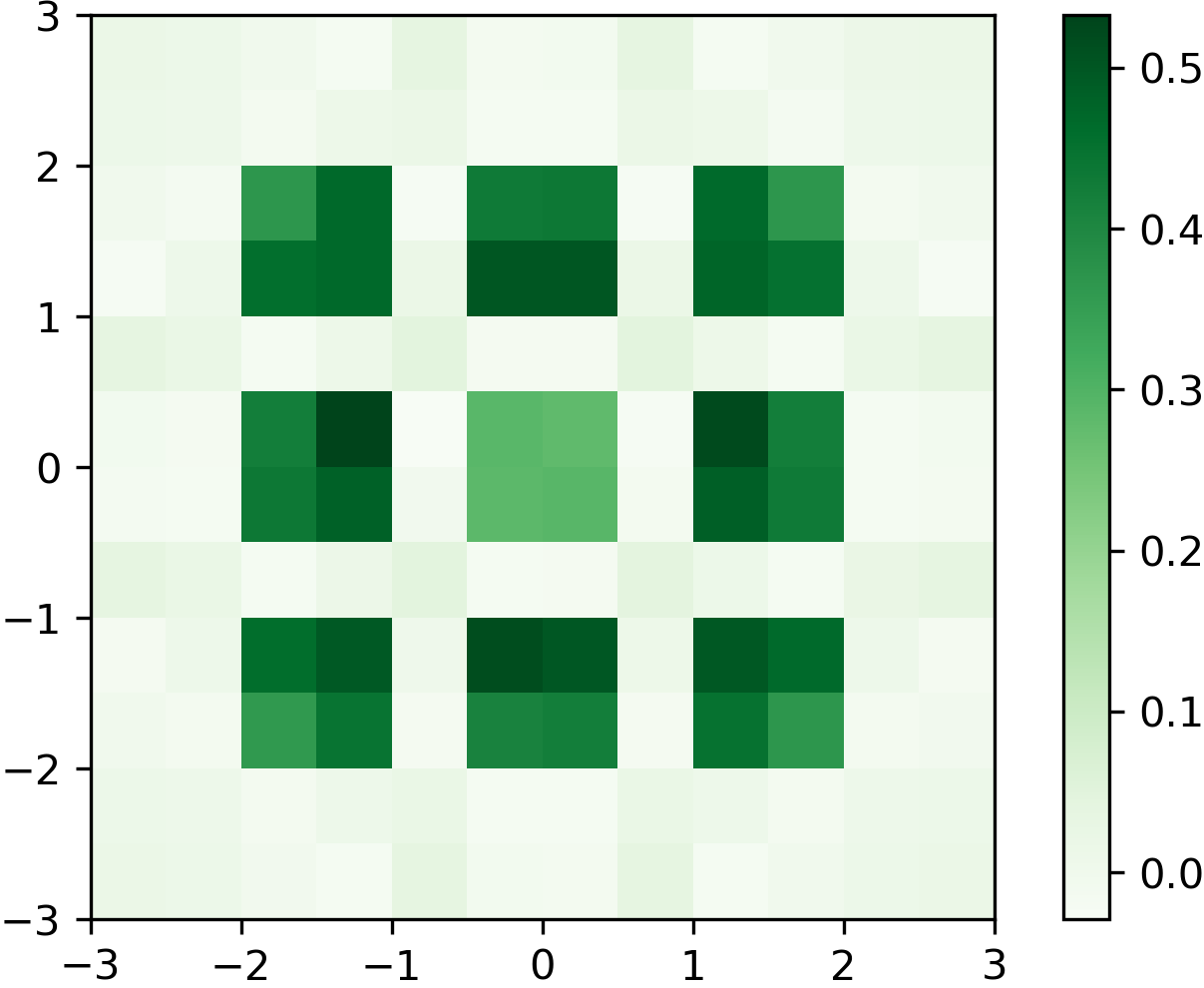}
\subcaption{EKF-update}
\end{minipage}
\begin{minipage}[b]{0.5\linewidth}
\centering
\includegraphics[keepaspectratio, scale=0.5]{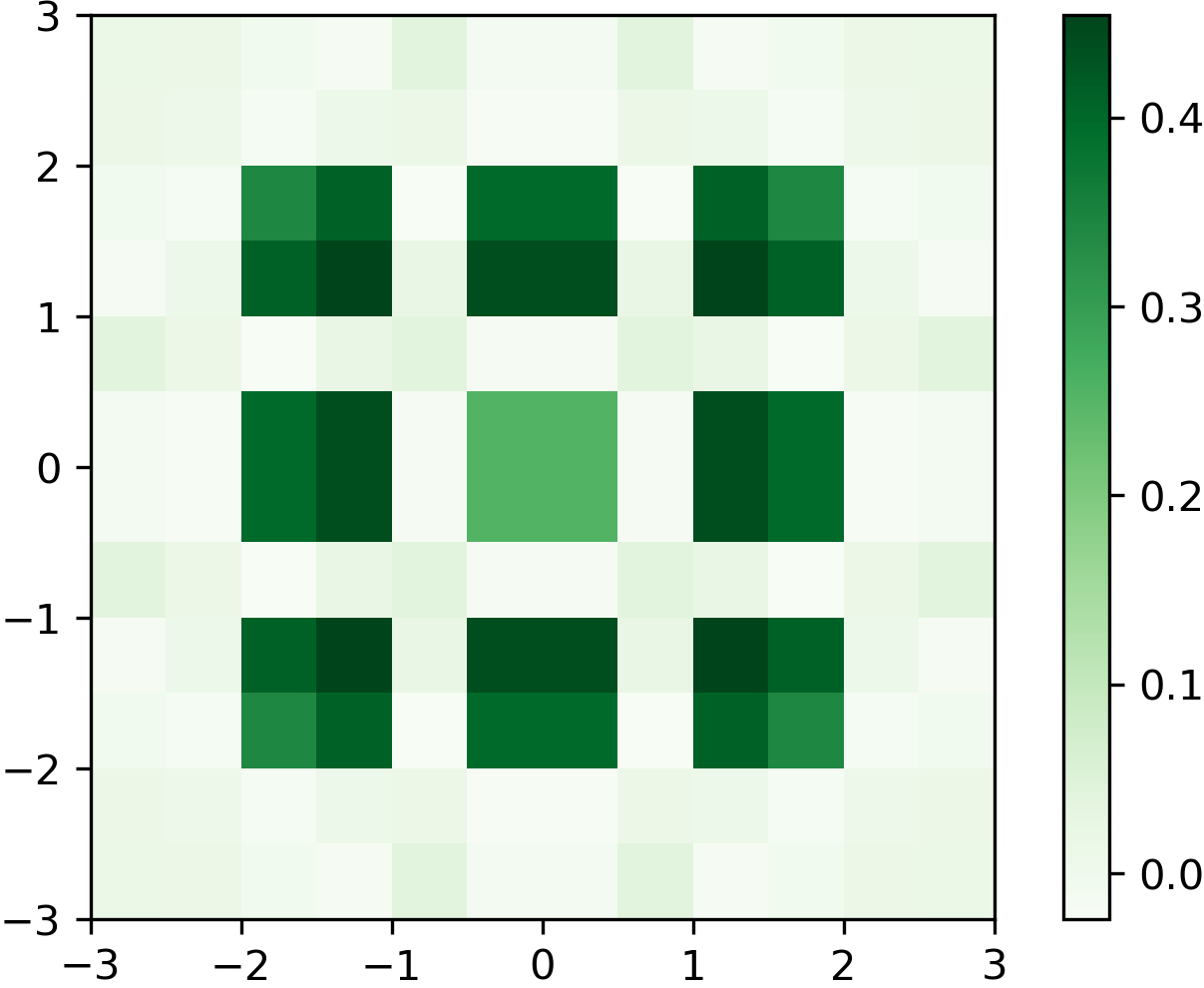}
\subcaption{KFL-update}
\end{minipage}
\vspace{1cm}
\end{tabular}
%%%%%%%%%%%%%%%%%%%%%%%%%%%%%%%%%%%%%%%%%%%%%%%%%%%%%%%%%%%%%%%%%%%%%%%%%%%%%%%%%%%%%%%%%%%%%%%%%%%%%%%%%%%%%%%%%%%%%%%%%%%%%%%%%%%%%%%%%%%%%%
\begin{tabular}{c}
\hspace{2cm}
\begin{minipage}[b]{0.6\linewidth}
\centering
\includegraphics[keepaspectratio, scale=0.67]{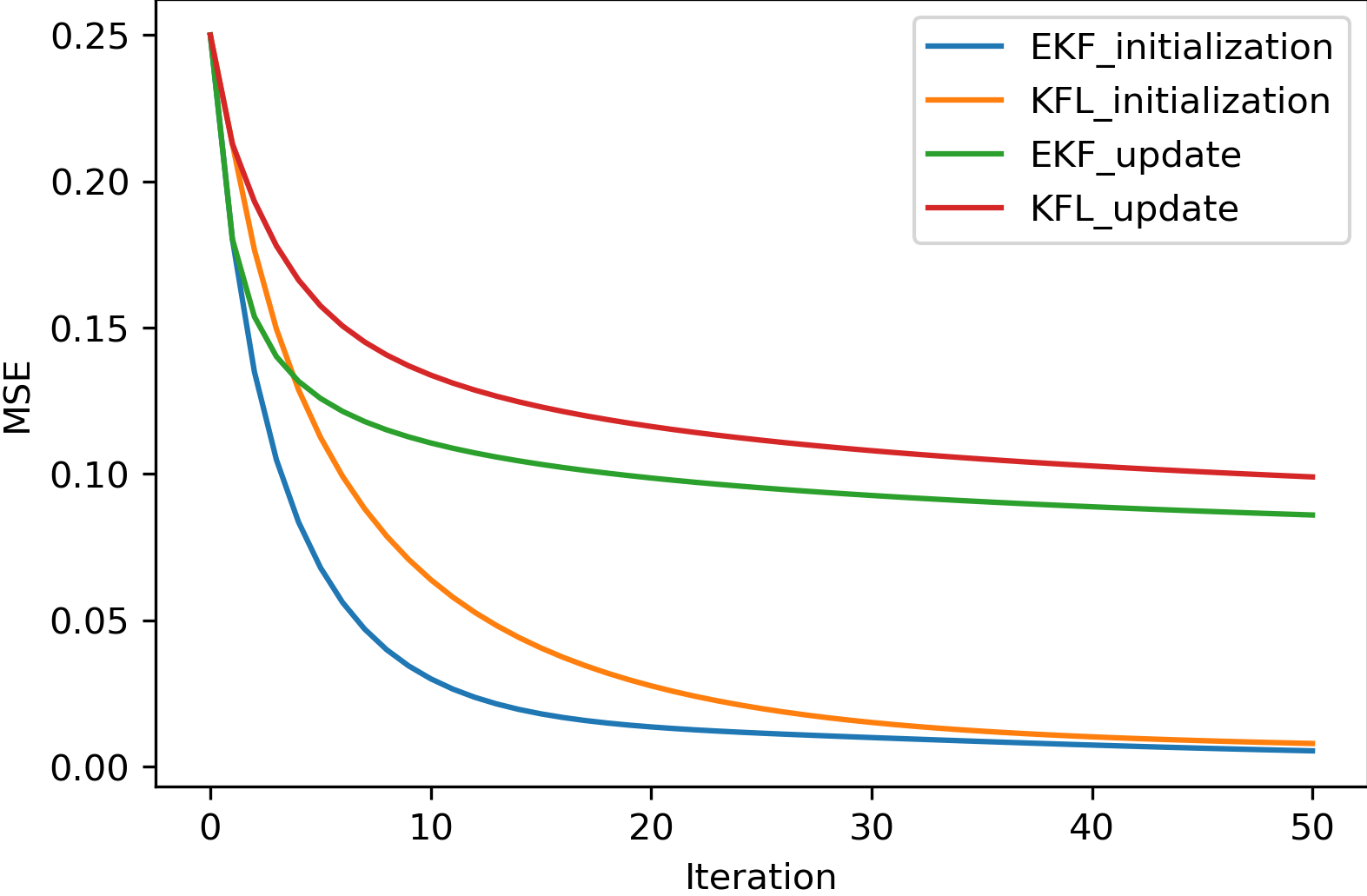}
%\subcaption{$B_1$, $k=3$, $\rho=0.4$}
\end{minipage}
\end{tabular}
%%%%%%%%%%%%%%%%%%%%%%%%%%%%%%%%%%%%%%%%%%%%%%%%%%%%%%%%%%%%%%%%%%%%%%%%%%%%%%%%%%%%%%%%%%%%%%%%%%%%%%%%%%%%%%%%%%%%%%%%%%%%%%%%%%%%%%%%%%%%%%%%%%%%%%%%%%%%%%%%%%%%%%%%%%%%%%%%%%%%%%%%%%%%%
\caption{Reconstruction for the noise-free case: $q_{2}^{true}$, $\alpha=3000$}
\label{B2}
\end{figure}
%%%%%%%%%%%%%%%%%%%%%%%%%%%%%%%%%%%%%%%%%%%%%%%%%%%%%%%%%%%%%%%%%%%%%%%%%%%%%%%%%%%%%%%%%%%%%%%%%%%%%%%%%%%%%%%%%%%%%%%%%%%%%%%%%%%%%%%%

%%%%%%%%%%%%%%%%%%%%%%%%%%%%%%%%%%%%%%%%%%%%%%%%%%%%%%%%%%%%%%%%%%%%%%%%%%%%%%%%%%%%%%%%%%%%%%%%%%%%%%%%%%%%%%%%%%%%%%%%%%%%%%%%%%%%%%%%%%%%%%%%%%%%%%%%%%%%%%%%%%%%%%%%%%%%%%%%%%%%%%%%%%%%%%%%%%%%%%%%%%%%%%%%%%%%%%%%%%%%%%%%%%%%%%%%%%%%%%%%%%%%%%%%%%%%%%%
\begin{figure}[h]
\begin{tabular}{c}
\begin{minipage}[b]{0.5\linewidth}
\centering
\includegraphics[keepaspectratio, scale=0.5]{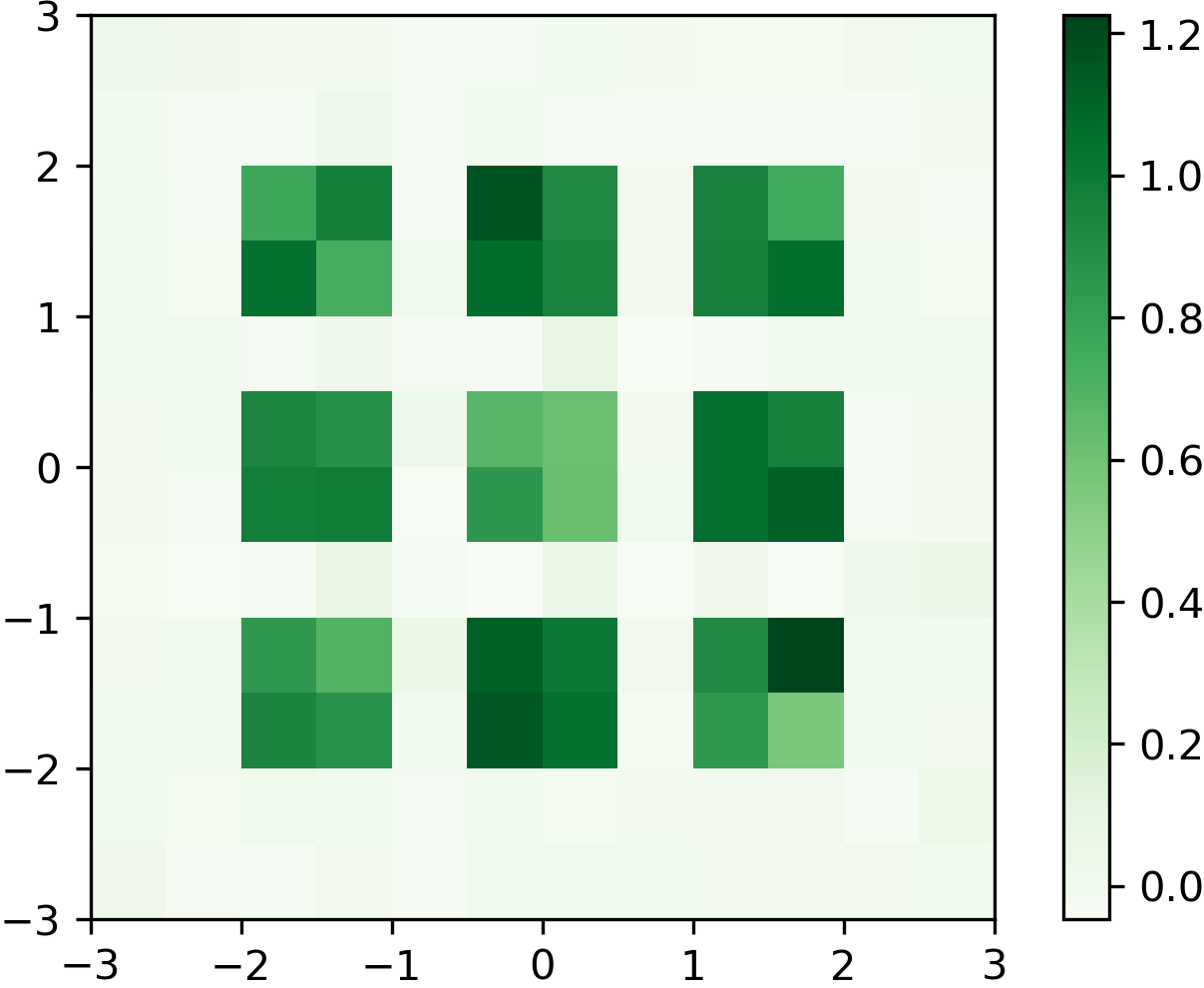}
\subcaption{EKF-initialization}
\end{minipage}
\begin{minipage}[b]{0.5\linewidth}
\centering
\includegraphics[keepaspectratio, scale=0.5]{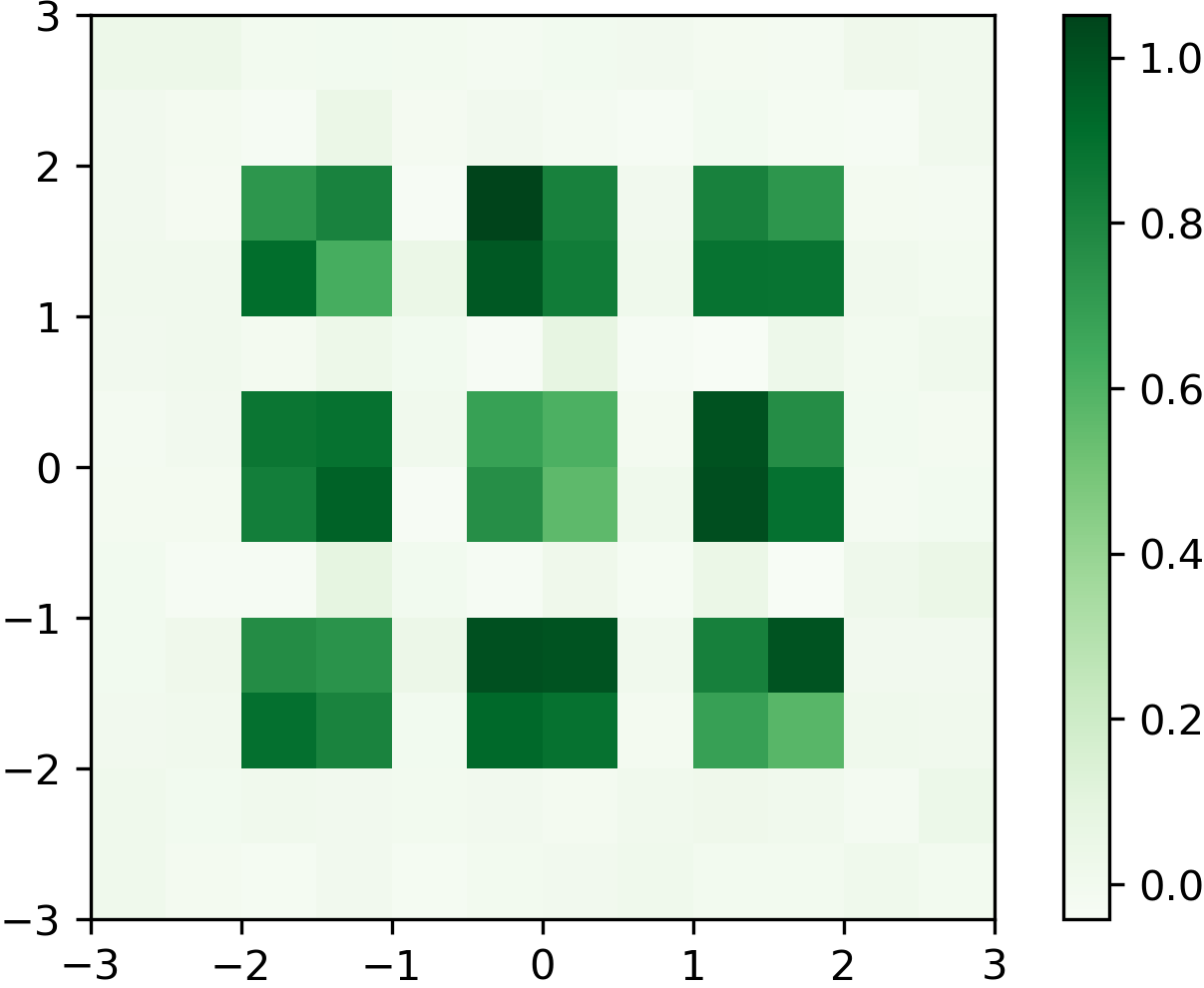}
\subcaption{KFL-initialization}
\end{minipage}
\end{tabular}
%%%%%%%%%%%%%%%%%%%%%%%%%%%%%%%%%%%%%%%%%%%%%%%%%%%%%%%%%%%%%%%%%%%%%%%%%%%%%%%%%%%%%%%%%%%%%%%%%%%%%%%%%%%%%%%%%%%%%%%%%%%%%%%%%%%%%%%%%%%%%%
\begin{tabular}{c}
\begin{minipage}[b]{0.5\linewidth}
\centering
\includegraphics[keepaspectratio, scale=0.5]{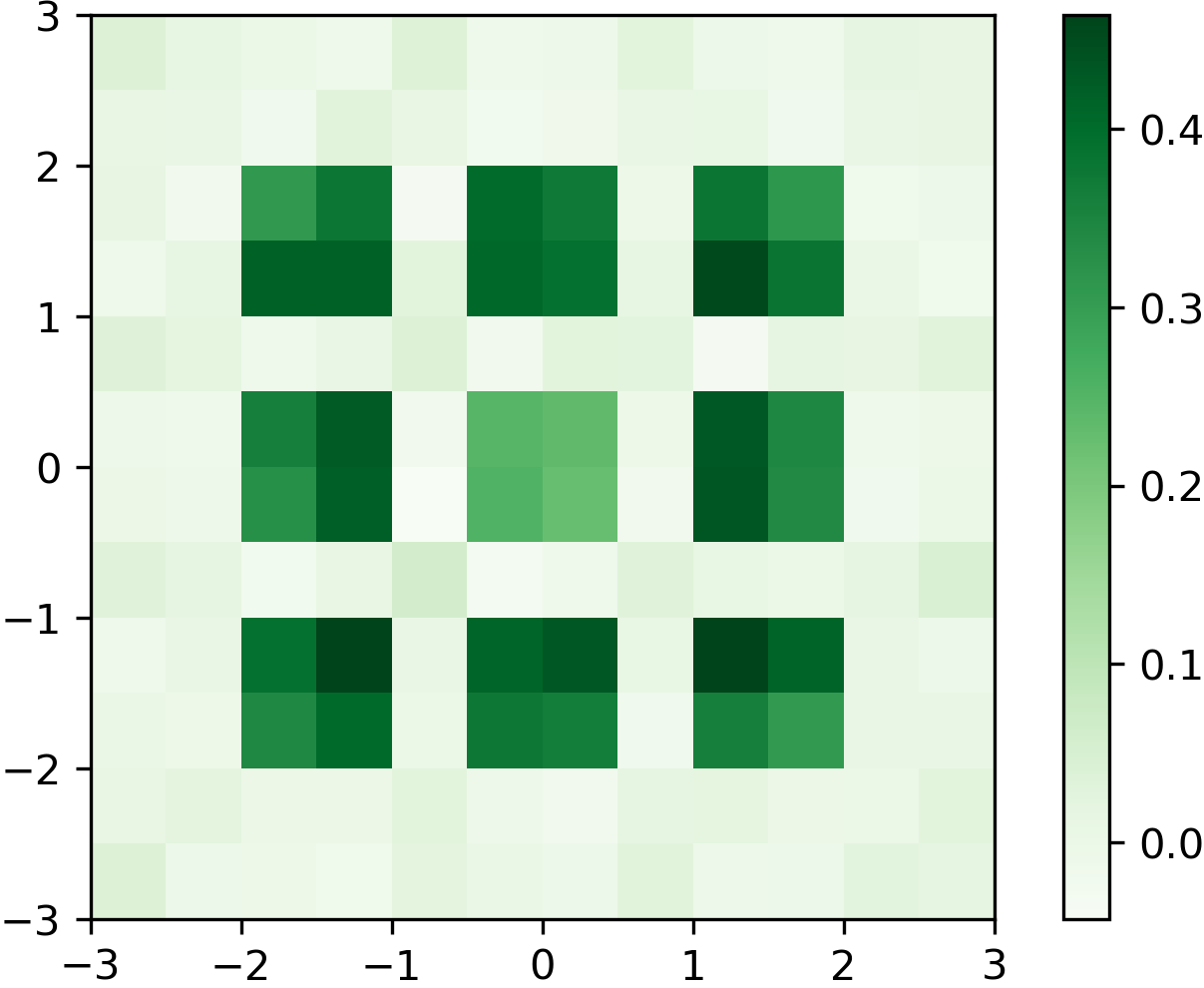}
\subcaption{EKF-update}
\end{minipage}
\begin{minipage}[b]{0.5\linewidth}
\centering
\includegraphics[keepaspectratio, scale=0.5]{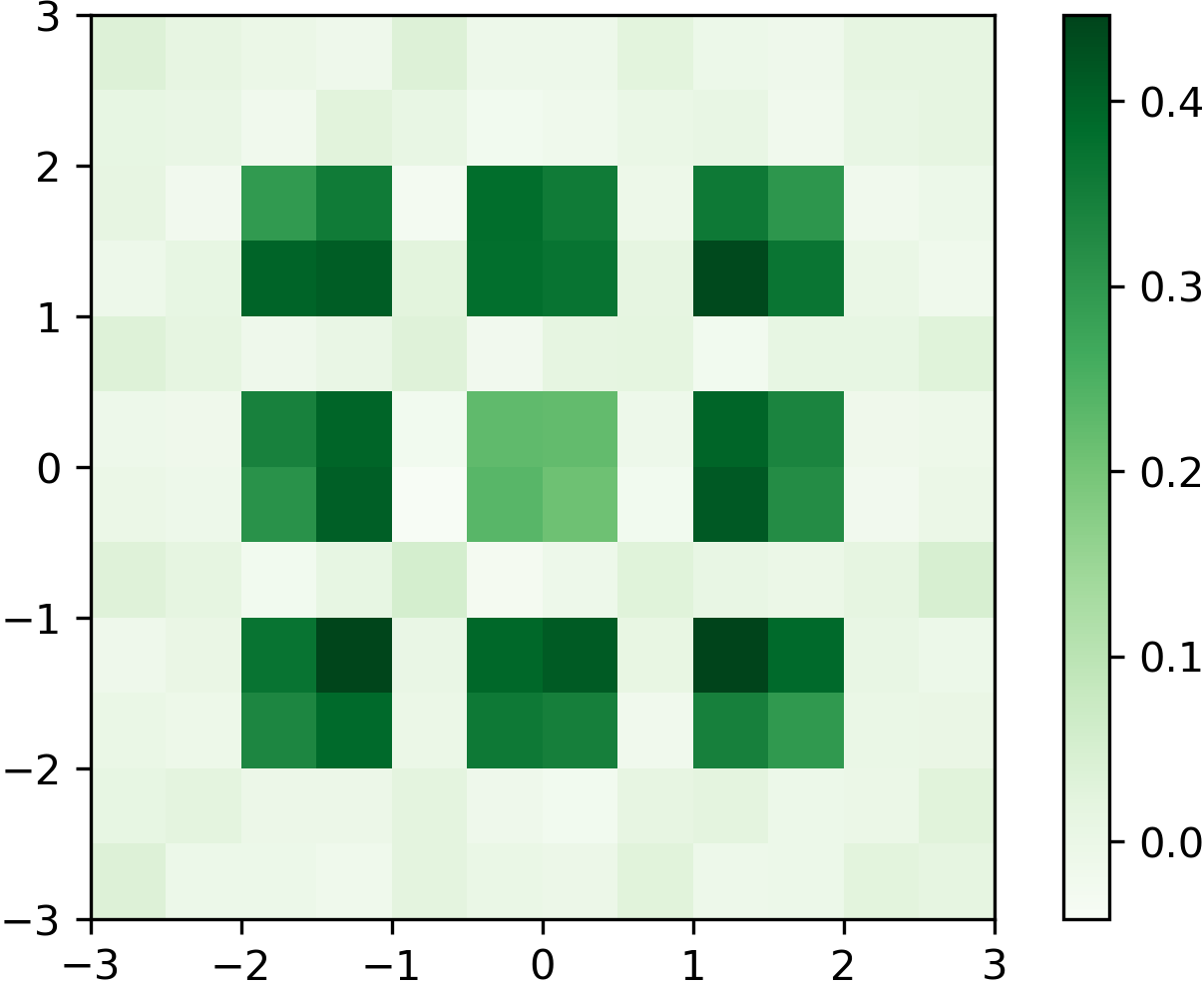}
\subcaption{KFL-update}
\end{minipage}
\end{tabular}
%%%%%%%%%%%%%%%%%%%%%%%%%%%%%%%%%%%%%%%%%%%%%%%%%%%%%%%%%%%%%%%%%%%%%%%%%%%%%%%%%%%%%%%%%%%%%%%%%%%%%%%%%%%%%%%%%%%%%%%%%%%%%%%%%%%%%%%%%%%%%%
\begin{tabular}{c}
\hspace{2cm}
\begin{minipage}[b]{0.6\linewidth}
\centering
\includegraphics[keepaspectratio, scale=0.67]{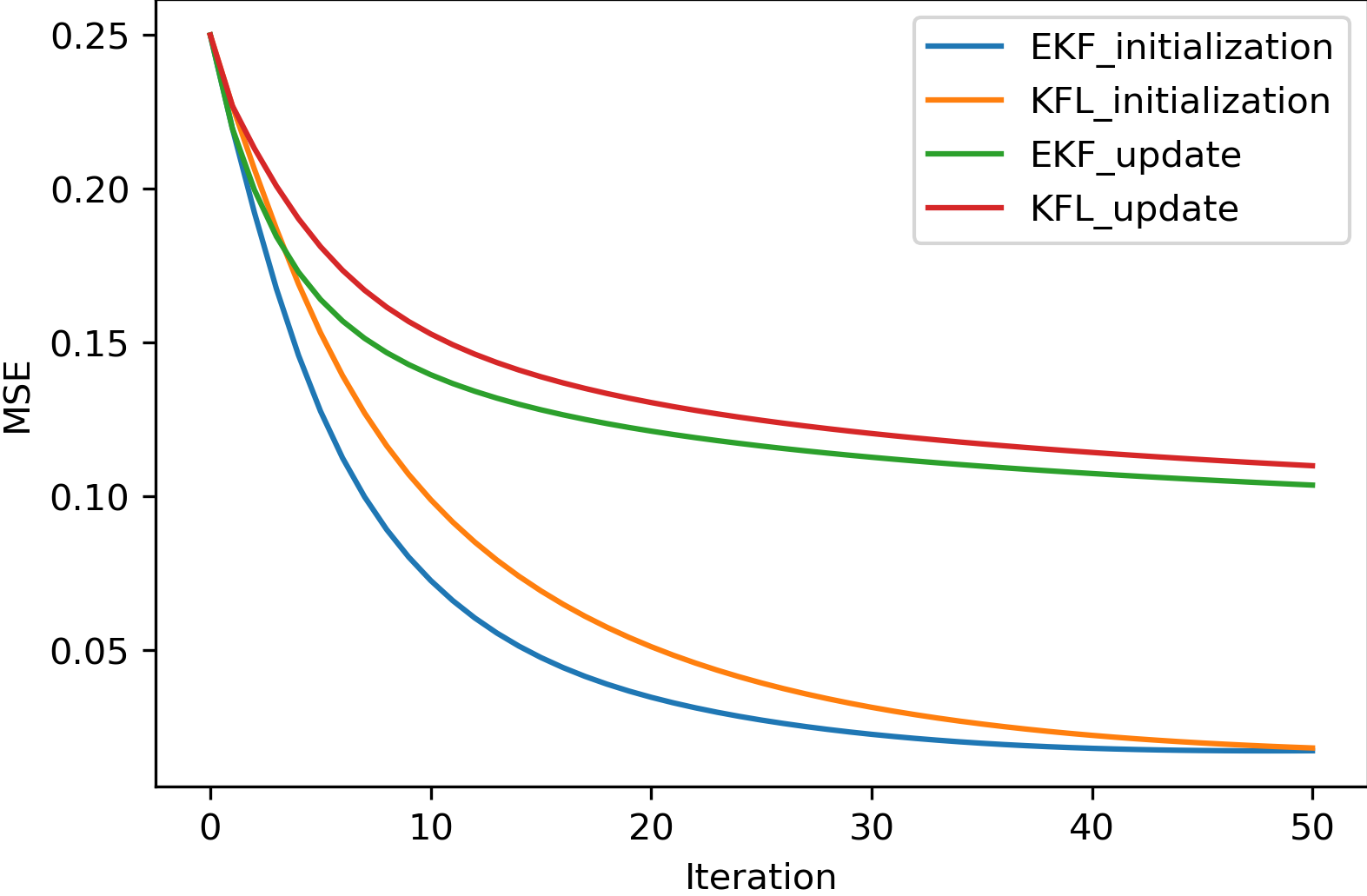}
%\subcaption{$B_1$, $k=3$, $\rho=0.4$}
\end{minipage}
\vspace{1cm}
\end{tabular}
%%%%%%%%%%%%%%%%%%%%%%%%%%%%%%%%%%%%%%%%%%%%%%%%%%%%%%%%%%%%%%%%%%%%%%%%%%%%%%%%%%%%%%%%%%%%%%%%%%%%%%%%%%%%%%%%%%%%%%%%%%%%%%%%%%%%%%%%%%%%%%%%%%%%%%%%%%%%%%%%%%%%%%%%%%%%%%%%%%%%%%%%%%%%%
\caption{Reconstruction for the noise-free case: $q_{1}^{true}$, $\alpha=10000$, $\sigma=0.5$}
\label{B2noise}
\end{figure}
%%%%%%%%%%%%%%%%%%%%%%%%%%%%%%%%%%%%%%%%%%%%%%%%%%%%%%%%%%%%%%%%%%%%%%%%%%%%%%%%%%%%%%%%%%%%%%%%%%%%%%%%%%%%%%%%%%%%%%%%%%%%%%%%%%%%%%%%
%%%%%%%%%%%%%%%%%%%%%%%%%%%%%%%%%%%%%%%%%%%%%%%%%%%%%%%%%%%%%%%%%%%%%%%%%%%%%%%%%%%%%%%%%%%%%%%%%%%%%%%%%%%%%%%%%%%%%%%%%%%%%%%%%%%%%%

%%%%%%%%%%%%%%%%%%%%%%%%%%%%%%%%%%%%%%%%%%%%%%%%%%%%%%%%%%%%%%%%%%%%%%%%%%%%%%%%%%%%%%%%%%%%%%%%%%%%%%%%%%%%%%%%%%%%%%%%%%%%%%%%%%%%%%%%%%%%%%%%%%%%%%%%%%%%%%%%%%%%%%%%%%%%%%%%%%%%%%%%%%%%%%%%%%%%%%%%%%%%%%%%%%%%%%%%%%%%%%%%%%%%%%%%%%%%%%%%%%%%%%%%%%%%%%%
\begin{figure}[h]
\begin{tabular}{c}
\hspace{-2cm}
\begin{minipage}[b]{0.6\linewidth}
\centering
\includegraphics[keepaspectratio, scale=0.6]{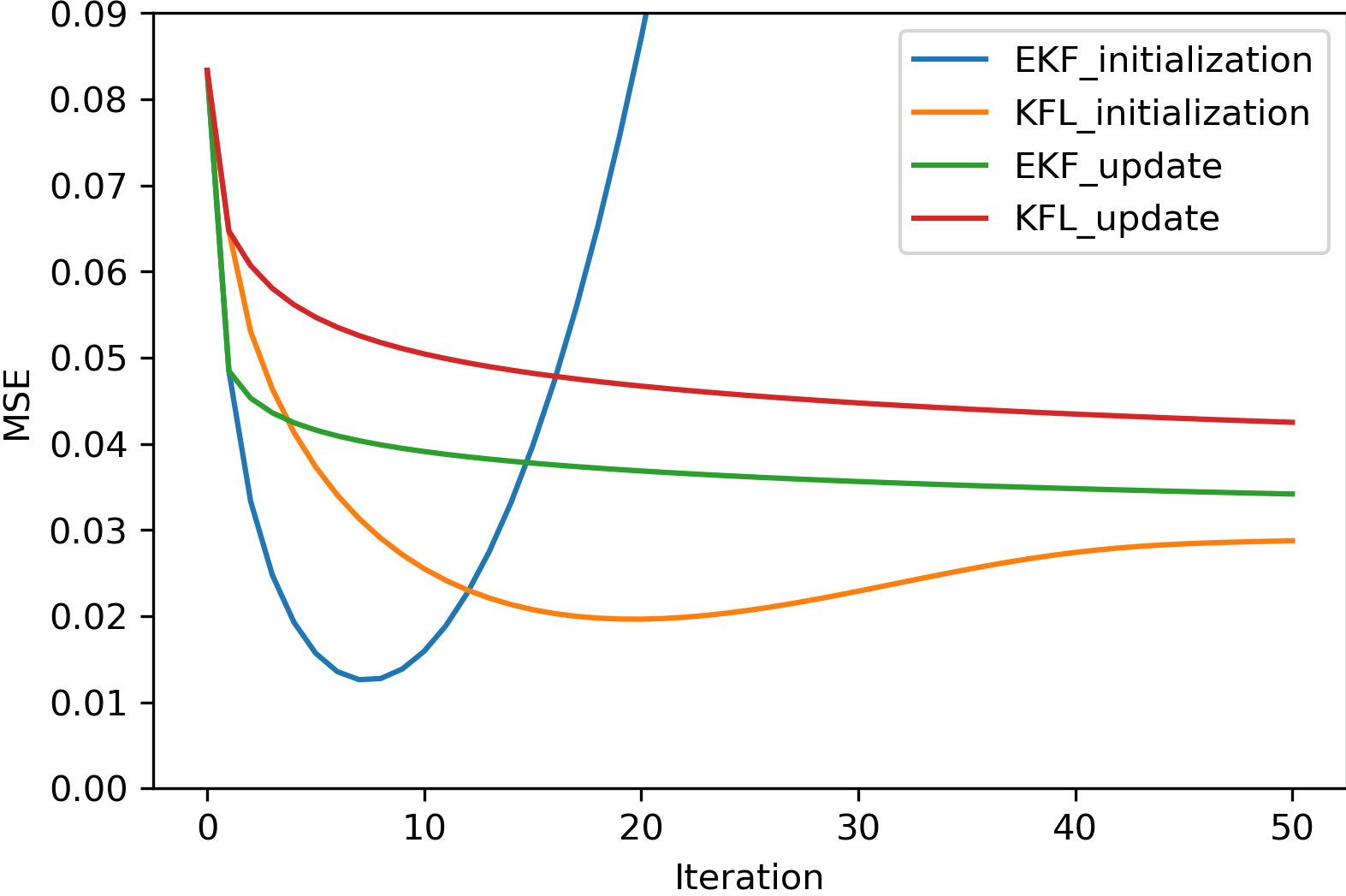}
\subcaption{$\sigma=0.6$}
\end{minipage}
\begin{minipage}[b]{0.6\linewidth}
\centering
\includegraphics[keepaspectratio, scale=0.6]{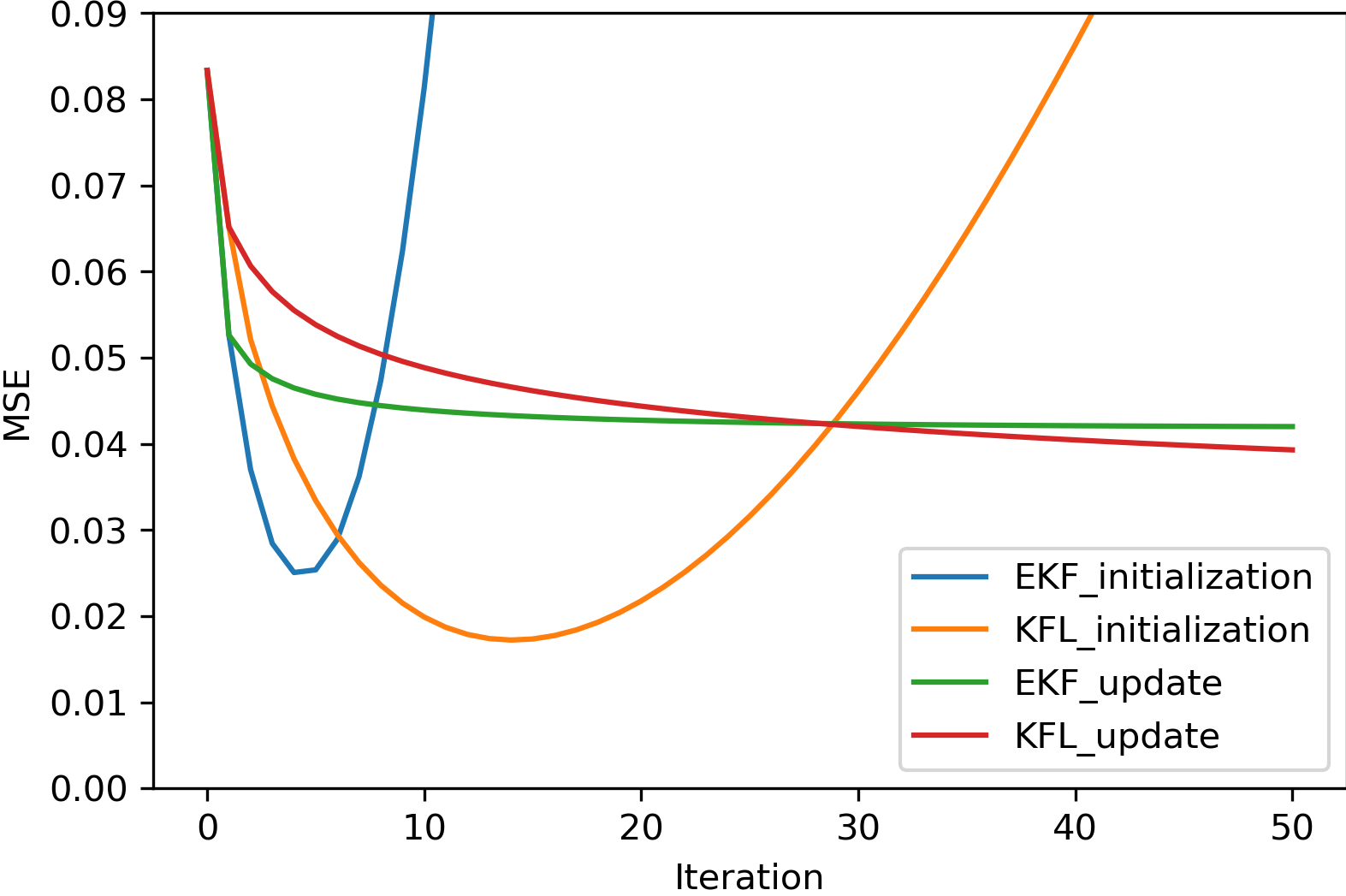}
\subcaption{$\sigma=0.9$}
\end{minipage}
\vspace{1cm}
\end{tabular}
%%%%%%%%%%%%%%%%%%%%%%%%%%%%%%%%%%%%%%%%%%%%%%%%%%%%%%%%%%%%%%%%%%%%%%%%%%%%%%%%%%%%%%%%%%%%%%%%%%%%%%%%%%%%%%%%%%%%%%%%%%%%%%%%%%%%%%%%%%%%%%
\begin{tabular}{c}
\hspace{2cm}
\begin{minipage}[b]{0.6\linewidth}
\centering
\includegraphics[keepaspectratio, scale=0.6]{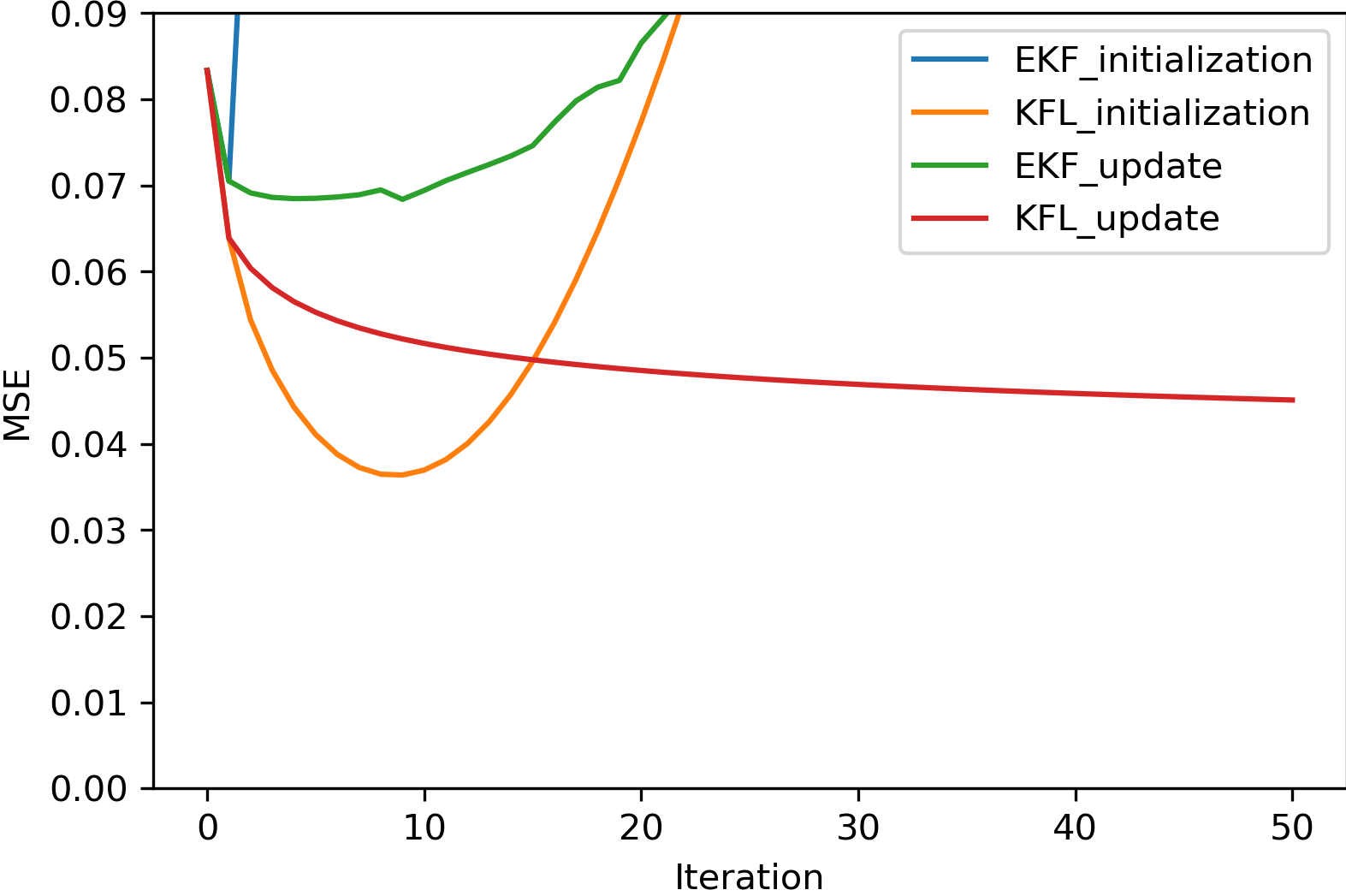}
\subcaption{$\sigma=1.2$}
\end{minipage}
\end{tabular}
%%%%%%%%%%%%%%%%%%%%%%%%%%%%%%%%%%%%%%%%%%%%%%%%%%%%%%%%%%%%%%%%%%%%%%%%%%%%%%%%%%%%%%%%%%%%%%%%%%%%%%%%%%%%%%%%%%%%%%%%%%%%%%%%%%%%%%%%%%%%%%%%%%%%%%%%%%%%%%%%%%%%%%%%%%%%%%%%%%%%%%%%%%%%%
\caption{MSE graph for different standard deviations $\sigma$: $q_{1}^{true}$, $\alpha=100$}
\label{Graph sigma}
\end{figure}
%%%%%%%%%%%%%%%%%%%%%%%%%%%%%%%%%%%%%%%%%%%%%%%%%%%%%%%%%%%%%%%%%%%%%%%%%%%%%%%%%%%%%%%%%%%%%%%%%%%%%%%%%%%%%%%%%%%%%%%%%%%%%%%%%%%%%%%%

%%%%%%%%%%%%%%%%%%%%%%%%%%%%%%%%%%%%%%%%%%%%%%%%%%%%%%%%%%%%%%%%%%%%%%%%%%%%%%%%%%%%%%%%%%%%%%%%%%%%%%%%%%%%%%%%%%%%%%%%%%%%%%%%%%%%%%%%%%%%%%%%%%%%%%%%%%%%%%%%%%%%%%%%%%%%%%%%%%%%%%%%%%%%%%%%%%%%%%%%%%%%%%%%%%%%%%%%%%%%%%%%%%%%%%%%%%%%%%%%%%%%%%%%%%%%%%%
\begin{figure}[h]
\begin{tabular}{c}
\hspace{-2cm}
\begin{minipage}[b]{0.6\linewidth}
\centering
\includegraphics[keepaspectratio, scale=0.6]{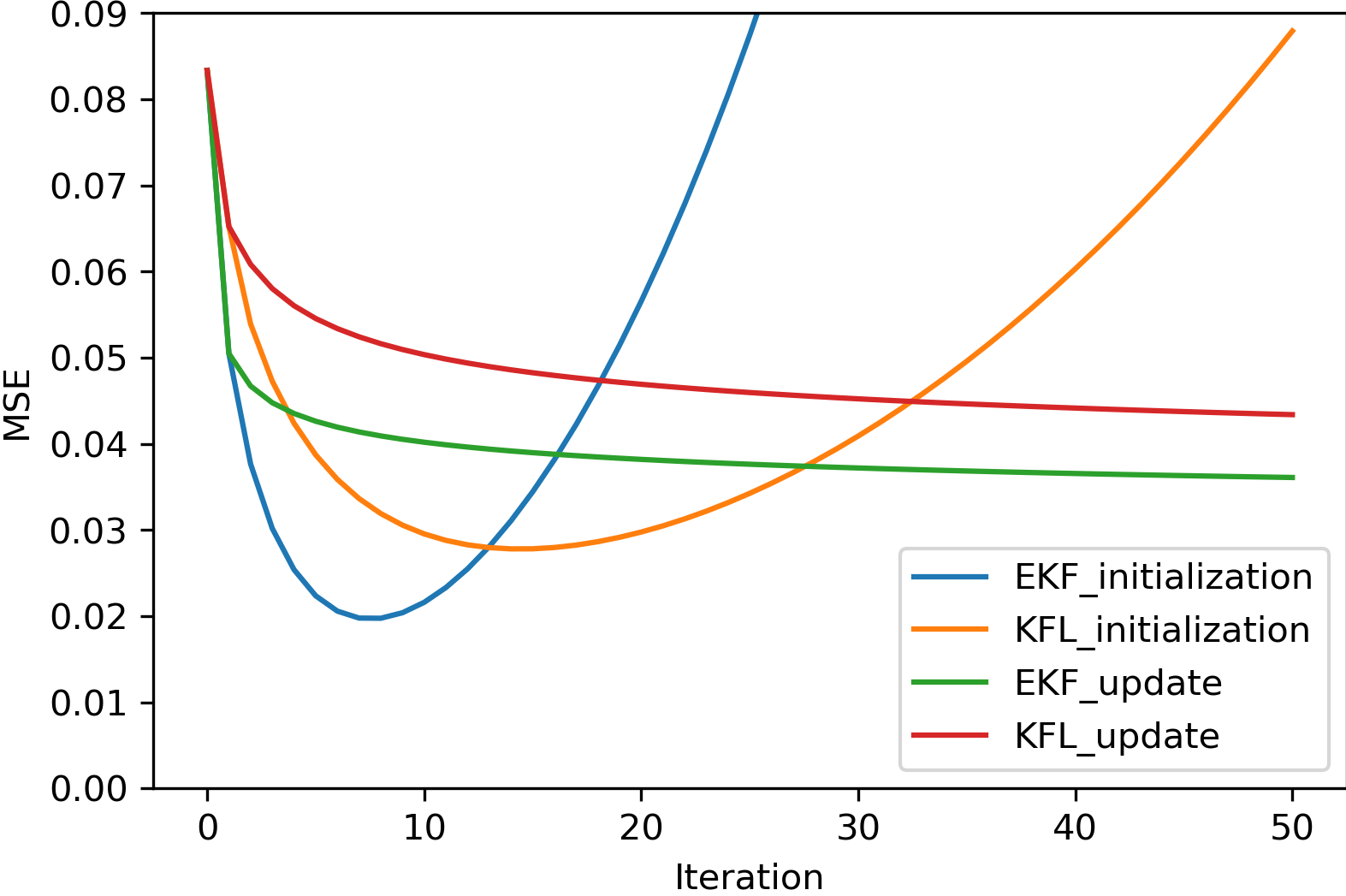}
\subcaption{$\alpha=300$}
\end{minipage}
\begin{minipage}[b]{0.6\linewidth}
\centering
\includegraphics[keepaspectratio, scale=0.6]{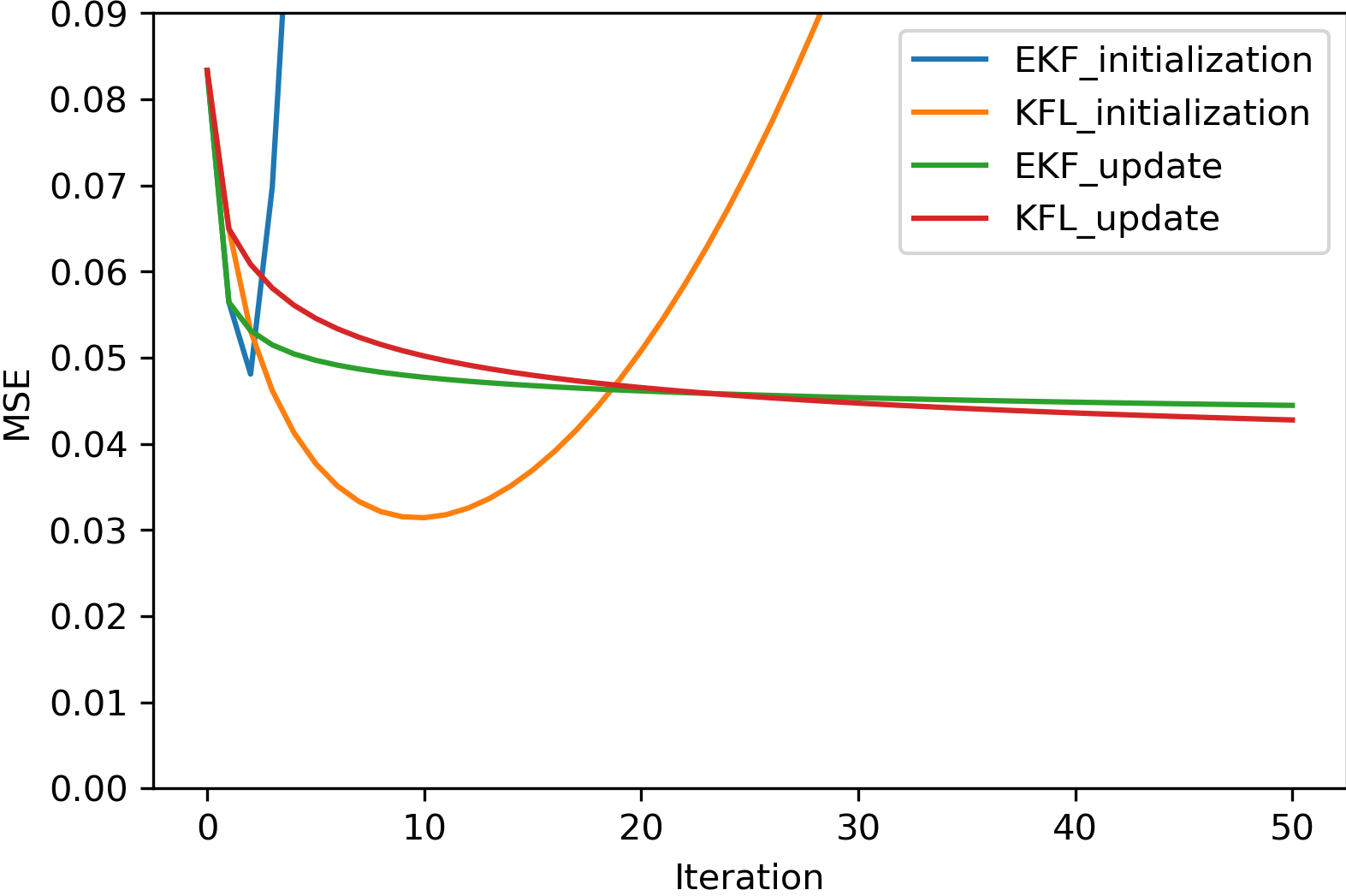}
\subcaption{$\alpha=100$}
\end{minipage}
\vspace{1cm}
\end{tabular}
%%%%%%%%%%%%%%%%%%%%%%%%%%%%%%%%%%%%%%%%%%%%%%%%%%%%%%%%%%%%%%%%%%%%%%%%%%%%%%%%%%%%%%%%%%%%%%%%%%%%%%%%%%%%%%%%%%%%%%%%%%%%%%%%%%%%%%%%%%%%%%
\begin{tabular}{c}
\hspace{2cm}
\begin{minipage}[b]{0.6\linewidth}
\centering
\includegraphics[keepaspectratio, scale=0.6]{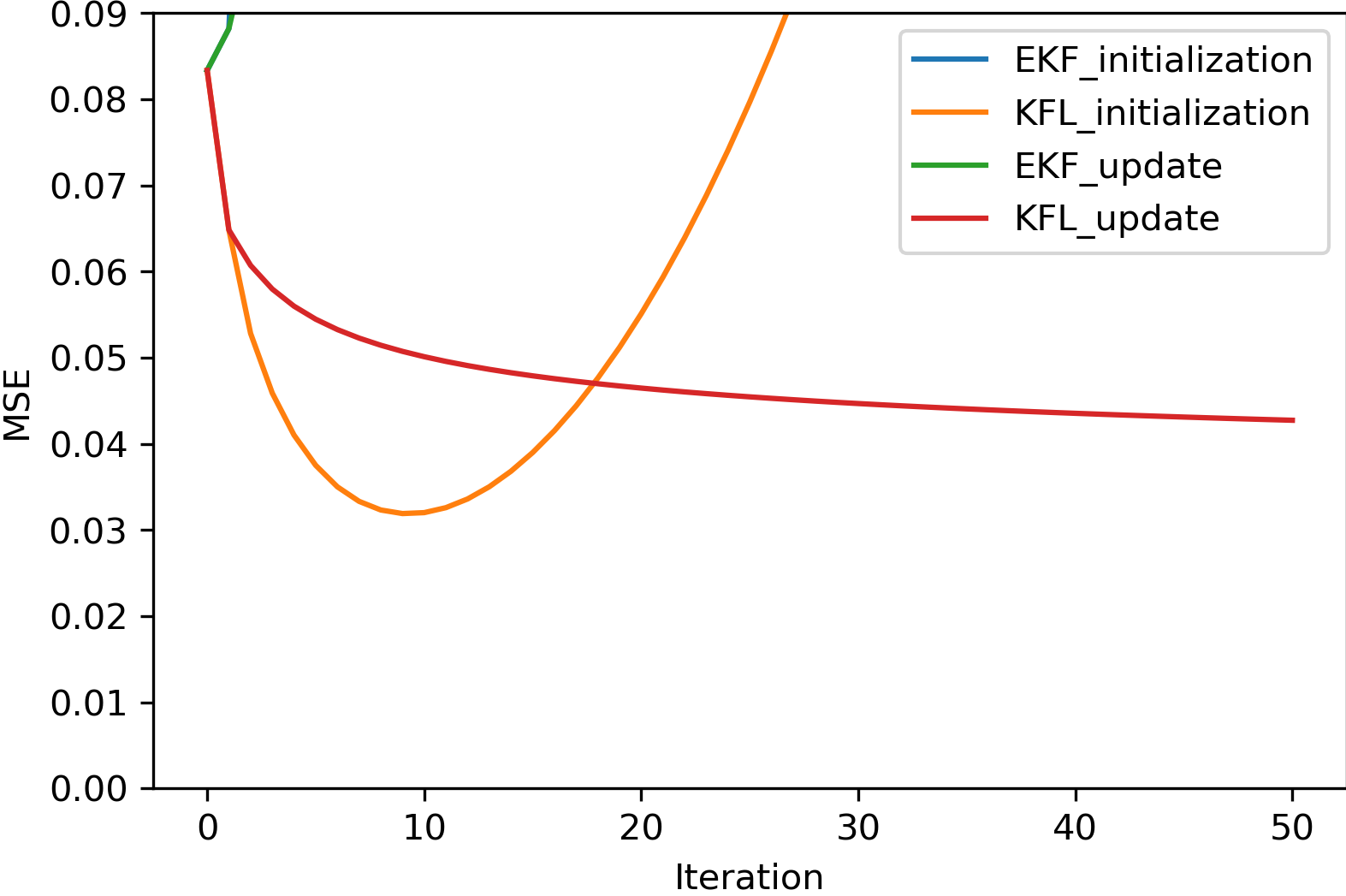}
\subcaption{$\alpha=50$}
\end{minipage}
\end{tabular}
%%%%%%%%%%%%%%%%%%%%%%%%%%%%%%%%%%%%%%%%%%%%%%%%%%%%%%%%%%%%%%%%%%%%%%%%%%%%%%%%%%%%%%%%%%%%%%%%%%%%%%%%%%%%%%%%%%%%%%%%%%%%%%%%%%%%%%%%%%%%%%%%%%%%%%%%%%%%%%%%%%%%%%%%%%%%%%%%%%%%%%%%%%%%%
\caption{MSE graph for different regularization parameters $\alpha$: $q_{1}^{true}$, $\sigma=1$}
\label{Graph alpha}
\end{figure}
%%%%%%%%%%%%%%%%%%%%%%%%%%%%%%%%%%%%%%%%%%%%%%%%%%%%%%%%%%%%%%%%%%%%%%%%%%%%%%%%%%%%%%%%%%%%%%%%%%%%%%%%%%%%%%%%%%%%%%%%%%%%%%%%%%%%%%%%
%%%%%%%%%%%%%%%%%%%%%%%%%%%%%%%%%%%%%%%%%%%%%%%%%%%%%%%%%%%%%%%%%%%%%%%%%%%%%%%%%%%%%%%%%%%%%%%%%%%%%%%%%%%%%%%%%%%%%%%%%%%%%%%%%%%%%%%%
%%%%%%%%%%%%%%%%%%%%%%%%%%%%%%%%%%%%%%%%%%%%%%%%%%%%%%%%%%%%%%%%%%%%%%%%%%%%%%%%%%%%%%%%%%%%%%%%%%%%%%%%%%%%%%%%%%%%%%%%%%%%%%%%%%%%%%
%%%%%%%%%%%%%%%%%%%%%%%%%%%%%%%%%%%%%%%%%%%%%%%%%%%%%%%%%%%%%%%%%%%%%%%%%%%%%%%%%%%%%%%%%%%%%%%%%%%%%%%%%%%%%%%%%%%%%%%%%%%%%%%%%%%%%

\end{document}